\newlength{\defbaselineskip}
\newcommand{\setlinespacing}[1]%
           {\setlength{\baselineskip}{#1 \defbaselineskip}}
\newcommand{\N}{{\mathbb{N}}}
\newcommand{\actaqed}{\hfill $\actabox$}
{\medskip\noindent \textit{Proof of #1. }}%
{\actaqed \medskip}
\def \bx{{\mathbf x}}
\def \bk{{\mathbf k}}
\def \bn{{\mathbf n}}
\def \bs{{\mathbf s}}
\def \br{{\mathbf r}}
\def\mb{{\bar m}}
\def\Di{{\mathcal D}}
\def\D{{\mathcal D}}
\def\K{{\mathcal K}}
\def\Tr{{\mathcal T}}
\def\R{{\mathbb R}}
\def\T{{\mathbb T}}
\def \<{\langle}
\def\>{\rangle}
\def \La{\Lambda}
\def \ep{\epsilon}
\def \e{\epsilon}
\def \ff{\varphi}
\def\N{{\mathbb N}}
\def\bN{{\mathbf N}}
\def\Z{{\mathbb Z}}
\def \sp{\operatorname{span}}
\def \dim{\operatorname{dim}}
\def\la{\lambda}
\def\bN{\mathbf N}
\def\bW{\mathbf W}
\def\bH{\mathbf H}
\def\bB{\mathbf B}
\newcommand{\be}{\begin{equation}}
\newcommand{\ee}{\end{equation}}
\newtheorem{Theorem}{Theorem}[section]
\newtheorem{Lemma}{Lemma}[section]
\newtheorem{Definition}{Definition}[section]
\newtheorem{Proposition}{Proposition}[section]
\newtheorem{Remark}{Remark}[section]
\numberwithin{equation}{section}
\begin{document}
\title{{Constructive sparse trigonometric approximation and other problems for functions with mixed smoothness}\thanks{\it Math Subject Classifications.
primary:  41A65; secondary: 42A10, 46B20.}}
\author{  V. Temlyakov \thanks{ University of South Carolina, USA, and Steklov Institute of Mathematics, Russia. Research was supported by NSF grant DMS-1160841 }} \maketitle
\begin{abstract}
{Our main interest in this paper is to  study some approximation problems for classes of functions with mixed smoothness.   We use technique, based on a combination of results from hyperbolic cross approximation, which were obtained in 1980s -- 1990s, and  recent results on greedy approximation to obtain sharp estimates for best $m$-term approximation with respect to the trigonometric system. 
We give some observations on numerical integration and approximate recovery of functions with mixed smoothness. We prove lower bounds, which show that one cannot improve accuracy of sparse grids methods with $\asymp 2^nn^{d-1}$ points in the grid by adding $2^n$ arbitrary points. In case of numerical integration these lower bounds provide best known lower bounds for optimal cubature formulas and for sparse grids based cubature formulas. }
\end{abstract}

\section{Introduction}

Sparse approximation with respect to dictionaries is a very important topic in the high-dimensional approximation. The main motivation for the study of sparse approximation is that many real world signals can be well approximated by sparse ones.   Sparse approximation
automatically implies a need for nonlinear approximation, in particular, for greedy approximation. We give a brief description of a sparse approximation problem   and present a  discussion of the obtained results and their
relation to previous work. 
In a general setting we are working in a Banach space $X$ with a redundant system of elements $\D$ (dictionary $\D$). There is a solid justification of importance of a Banach space setting in numerical analysis in general and in sparse approximation in particular (see, for instance, \cite{Tbook}, Preface). Let $X$ be a real Banach space with norm $\|\cdot\|:=\|\cdot\|_X$. We say that a set of elements (functions) $\D$ from $X$ is a dictionary  if each $g\in \D$ has norm   one ($\|g\|=1$), and the closure of $\sp \D$ is $X$. A symmetrized dictionary is $\D^\pm:=\{\pm g:g\in \D\}$. 
For a nonzero element $g\in X$ we let $F_g$ denote a norming (peak) functional for $g$:
$$
\|F_g\|_{X^*} =1,\qquad F_g(g) =\|g\|_X.
$$
The existence of such a functional is guaranteed by the Hahn-Banach theorem.

An element (function, signal) $s\in X$ is said to be $m$-sparse with respect to $\D$ if
it has a representation $s=\sum_{i=1}^mc_ig_i$,   $g_i\in \D$, $i=1,\dots,m$. The set of all $m$-sparse elements is denoted by $\Sigma_m(\D)$. For a given element $f$ we introduce the error of best $m$-term approximation
$$
\sigma_m(f,\D)_X := \inf_{s\in\Sigma_m(\D)} \|f-s\|_X.
$$
For a function class $\bW$ define
$$
\sigma_m(\bW,\D)_X :=\sup_{f\in\bW} \sigma_m(f,\D)_X.
$$
 Let 
$t\in (0,1]$ be a given   nonnegative number.  We define (see \cite{T8}) the Weak Chebyshev Greedy Algorithm (WCGA) that is a generalization for Banach spaces of Weak Orthogonal Greedy Algorithm defined and studied in \cite{T13} (see also \cite{Tbook}).

 {\bf Weak Chebyshev Greedy Algorithm (WCGA).}
We define $f_0 := f^{c,t}_0 :=f$. Then for each $m\ge 1$ we inductively define

1). $\varphi_m :=\varphi^{c,t}_m \in {\mathcal D}$ is any element satisfying
$$
|F_{f_{m-1}}(\varphi_m)| \ge t\sup_{g\in {\mathcal D}} |F_{f_{m-1}}(g)|.
$$

2). Define
$$
\Phi_m := \Phi^t_m := \text{span} \{\varphi_j\}_{j=1}^m,
$$
and define $G_m := G_m^{c,t}$ to be the best approximant to $f$ from $\Phi_m$.

3). Denote
$$
f_m := f^{c,t}_m := f-G_m.
$$

We demonstrated in the paper \cite{T144} that the Weak Chebyshev Greedy Algorithm (WCGA)  is very good for $m$-term approximation with respect to a special class of dictionaries, in particular, for the trigonometric system.   The trigonometric system is a classical system that is known to be difficult to study. In \cite{T144} we study among other problems the problem of nonlinear sparse approximation with respect to it. Let  ${\mathcal R}{\mathcal T}$ denote the real trigonometric system 
$1,\sin 2\pi x,\cos 2\pi x, \dots$ on $[0,1]$ and let ${\mathcal R}{\mathcal T}_p$ to be its version normalized in $L_p([0,1])$. Denote ${\mathcal R}{\mathcal T}_p^d := {\mathcal R}{\mathcal T}_p\times\cdots\times {\mathcal R}{\mathcal T}_p$ the $d$-variate trigonometric system. We need to consider the real trigonometric system because the algorithm WCGA is well studied for the real Banach space.  
   We proved in \cite{T144}  the following 
Lebesgue-type inequality for the WCGA.
\begin{Theorem}\label{TC} Let $\D$ be the normalized in $L_p$, $2\le p<\infty$, real $d$-variate trigonometric
system. Then    
for any $f\in L_p$ the WCGA with weakness parameter $t$ gives
\begin{equation}\label{I1.4}
\|f_{C(t,p,d)m\ln (m+1)}\|_p \le C\sigma_m(f,\D)_p .
\end{equation}
\end{Theorem}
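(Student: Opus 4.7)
The plan is to derive this Lebesgue-type inequality from a general convergence theorem for the WCGA in uniformly smooth Banach spaces together with two structural properties of the trigonometric system. First, I would record that for $2\le p<\infty$ the space $L_p$ is uniformly smooth of power type $2$: its modulus of smoothness satisfies $\rho(u)\le \gamma_p u^2$. This fact is what allows us to quantify how much the norm of the residual $f_{m-1}$ drops at each WCGA step, in terms of the value $|F_{f_{m-1}}(\varphi_m)|$ of the peak functional on the greedily selected dictionary element.

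Second, I would isolate the key property of $\D={\mathcal R}{\mathcal T}_p^d$ that is needed to lower bound the greedy step: a Nikol'skii-type inequality for $m$-sparse trigonometric polynomials. Specifically, for any $\Lambda\subset\Z^d$ with $|\Lambda|\le m$ and any $t=\sum_{\bk\in\Lambda} c_\bk e^{2\pi i\bk\cdot\bx}$ one has $\|t\|_p \le C(p,d)\,m^{1/2-1/p}\|t\|_2$, obtained from the trivial bound $\|t\|_\infty\le m^{1/2}\|t\|_2$ and interpolation with $L_2$. Dualising this inequality (or combining with Hausdorff--Young on the $L_{p'}$ side) gives the crucial lower bound: if $h$ is any $m$-sparse element of the dictionary span, then
\[
\sup_{g\in\D}|F_h(g)| \;\ge\; c(p,d)\, m^{-(1-1/p)}\,\|h\|_p,
\]
so the greedy element $\varphi_m$ captures a definite proportion of the norm of the current residual whenever that residual is close to some $m$-sparse vector.

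Third, I would run the standard WCGA iteration estimate (as developed in \cite{T8} and in the book \cite{Tbook}): combining the power-type-$2$ smoothness of $L_p$ with the displayed bound on $|F_{f_{m-1}}(\varphi_m)|$, one deduces that at each step, as long as $\|f_k\|_p \ge 2\sigma_m(f,\D)_p$, the residual norm contracts by a factor $1 - c\,t^2/m^{2(1-1/p)}$. Iterating this on the order of $m^{2(1-1/p)}\ln(m+1)$ steps halves the residual, and a telescoping/doubling argument over sparsity levels absorbs the contribution of $\sigma_m$. A more careful bookkeeping, exploiting the fact that at the $\ell$-th stage one only needs to work with a polynomial of sparsity comparable to $m$ and a fresh Nikol'skii constant, collapses the total count to $C(t,p,d)\,m\ln(m+1)$ iterations.

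The main obstacle is the second step: proving the right $L_p$-estimate for sparse trigonometric polynomials with sharp enough constant and tracking dependence on dimension $d$ to make the geometry available to WCGA. The weakness parameter $t$ and the smoothness constant $\gamma_p$ enter multiplicatively into $C(t,p,d)$, but the extraction of just a single $\ln(m+1)$ factor (rather than a higher power of $\ln m$) relies on the fact that the structural inequality above is sparsity-uniform, i.e.\ holds with the same exponent $1-1/p$ on every sparsity level encountered during the greedy recursion.
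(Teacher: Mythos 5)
You should first note that the paper does not actually prove Theorem \ref{TC}; it quotes it from \cite{T144}. Your skeleton --- power-type-$2$ smoothness of $L_p$ for $p\ge 2$, plus a structural inequality for sparse trigonometric polynomials, plus the standard WCGA recursion with a doubling argument over sparsity levels --- is indeed the architecture of the proof in \cite{T144}. However, the quantitative core of your argument is wrong, and the error is located exactly where the theorem is hardest. The correct structural fact is an upper bound on the $A$-norm of a sparse polynomial: for $t=\sum_{\bk\in\Lambda}c_\bk g_\bk$ with $|\Lambda|\le m$ and $g_\bk$ the $L_p$-normalized exponentials, one has
$$
\|t\|_A:=\sum_{\bk\in\Lambda}|c_\bk|\le m^{1/2}\Bigl(\sum_{\bk\in\Lambda}|c_\bk|^2\Bigr)^{1/2}\le C(p,d)\,m^{1/2}\,\|t\|_2\le C(p,d)\,m^{1/2}\,\|t\|_p ,
$$
the last step using $p\ge 2$. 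The exponent here is $1/2$, uniformly in $p$. Your Nikol'skii inequality $\|t\|_p\le C m^{1/2-1/p}\|t\|_2$ points in the wrong direction for this purpose, and the ``dualised'' bound you display, $\sup_{g\in\D}|F_h(g)|\ge c\,m^{-(1-1/p)}\|h\|_p$, cannot be correct as stated: the left-hand side is at most $1$ and is invariant under rescaling of $h$, while the right-hand side scales with $\|h\|_p$. The correct form, obtained from $\|h\|_p=F_h(h)\le\|h\|_A\sup_{g}|F_h(g)|$ and the $A$-norm bound above, is $\sup_g|F_h(g)|\ge c(p,d)\,m^{-1/2}$ --- and in the actual recursion the functional $F_{f_{k-1}}$ is evaluated not at the residual but at the sparse element $s-G_{k-1}$ (where $s$ is a best $m$-term approximant), giving $\sup_g|F_{f_{k-1}}(g)|\ge t\,(\|f_{k-1}\|_p-\sigma_m)/\|s-G_{k-1}\|_A$ with $s-G_{k-1}$ of sparsity $m+k-1$.

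With the exponent $1/2$ the per-step contraction is $1-c\,t^2/(\gamma_p(m+k))$, and summing over a doubling scheme yields exactly $C(t,p,d)\,m\ln(m+1)$ iterations. With your exponent $1-1/p$ you would need $\asymp m^{2(1-1/p)}\ln(m+1)$ iterations, which for $p>2$ strictly exceeds the claimed bound; the sentence asserting that ``a more careful bookkeeping \dots collapses the total count to $C(t,p,d)\,m\ln(m+1)$'' is precisely the step that cannot be carried out from your inequality and is left unjustified. So the proposal has the right shape but a genuine gap: replace the Nikol'skii/dualisation step by the sparsity-uniform $\ell_1$--$\ell_2$ bound $\|t\|_A\le Cm^{1/2}\|t\|_p$ and redo the iteration count; then the argument closes along the lines of \cite{T144}.
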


The above Lebesgue-type inequality guarantees that the WCGA works very well for each individual function $f$. As a complement to this inequality we would like to obtain results, which relate rate of decay of $\sigma_m(f,\Tr^d)_p$ to some smoothness type properties of 
$f$. It is the main goal of this paper. We measure smoothness in terms of mixed derivative and mixed difference. We note that the function classes with bounded mixed derivative are not only interesting and challenging object for approximation theory but they are important in numerical computations. Griebel and his group 
use approximation methods designed for these classes in elliptic variational
problems.
   Recent work of Yserentant  on new
regularity models for the Schr\"odinger equation shows that the
eigenfunctions of the electronic Schr\"odinger operator have a certain mixed
smoothness similar to the bounded mixed derivative. This makes approximation techniques developed for classes of functions with bounded mixed derivative a proper choice for the numerical
treatment of the Schr\"odinger
equation.      

Sparse trigonometric approximation of periodic functions began by the paper of S.B. Stechkin \cite{St}, who used it in the criterion for absolute convergence of trigonometric series. R.S. Ismagilov \cite{I} found nontrivial estimates for $m$-term approximation of functions with singularities of the type $|x|$ and gave interesting and important applications to the widths of Sobolev classes. He used a deterministic method based on number theoretical constructions. 
His method was developed by V.E. Maiorov \cite{M}, who used a method based on Gaussian sums.  Further strong results were obtained in \cite{DT1} with the help of a nonconstructive result from finite dimensional Banach spaces due to E.D. Gluskin \cite{G}. Other powerful nonconstructive method, which is based on a probabilistic argument, was used by Y. Makovoz \cite{Mk} and by E.S. Belinskii \cite{Be1}. Different methods were created in \cite{T29}, \cite{KTE1}, \cite{T1}, \cite{Tappr} for proving lower bounds for function classes.
It was discovered in \cite{DKTe} and \cite{T12} that greedy algorithms can be used for constructive $m$-term approximation with respect to the trigonometric system. We demonstrate in this paper how greedy algorithms can be used to prove optimal or best known upper bounds for $m$-term approximation of classes of function with mixed smoothness.  
It is a simple and powerful method of proving upper bounds. However, we do not know how to use it for small smoothness. The reader can find a detailed study of $m$-term approximation of classes of function with mixed smoothness, including small smoothness, in 
the paper \cite{Rom1} by A.S. Romanyuk. We note that in the case $2<p<\infty$ the upper bounds in \cite{Rom1} are not constructive.

We begin with some notation.
Let $\mathbf s=(s_1,\dots,s_d )$ be a  vector  whose  coordinates  are
nonnegative integers
$$
\rho(\mathbf s) := \bigl\{ \mathbf k\in\mathbb Z^d:[ 2^{s_j-1}] \le
|k_j| < 2^{s_j},\qquad j=1,\dots,d \bigr\},
$$
$$
Q_n :=   \cup_{\|\mathbf s\|_1\le n}
\rho(\mathbf s) \quad\text{--}\quad\text{a step hyperbolic cross},
$$
$$
\Gamma(N) := \bigl\{ \mathbf k\in\mathbb Z^d :\prod_{j=1}^d
\max\bigl( |k_j|,1\bigr) \le N\bigr\}\quad\text{--}\quad\text{a hyperbolic cross}.
$$
 For $f\in L_1 (\T^d)$
$$
\delta_{\mathbf s} (f,\mathbf x) :=\sum_{\mathbf k\in\rho(\mathbf s)}
\hat f(\mathbf k)e^{i(\mathbf k,\mathbf x)}.
$$
Let $G$ be a finite set of points in $\mathbb Z^d$, we denote
$$
\Tr(G) :=\left\{ t : t(\mathbf x) =\sum_{\mathbf k\in G}c_{\mathbf k}
e^{i(\mathbf k,\mathbf x)}\right\} .
$$
For the sake of simplicity we shall write  
$\Tr\bigl(\Gamma(N)\bigr) = \Tr(N)$. 

We study some approximation problems for classes of functions with mixed smoothness. We define these classes momentarily.  
We will begin with the case of univariate periodic functions. Let for $r>0$ 
\be\label{6.3}
F_r(x):= 1+2\sum_{k=1}^\infty k^{-r}\cos (kx-r\pi/2) 
\ee
and
\be\label{6.4}
W^r_p := \{f:f=\varphi \ast F_r,\quad \|\varphi\|_p \le 1\}.  
\ee
It is well known that for $r>1/p$ the class $W^r_p$ is embedded into the space of continuous functions $C(\T)$. In a particular case of $W^1_1$ we also have embedding into $C(\T)$.

In the multivariate case for $\bx=(x_1,\dots,x_d)$ denote
$$
F_r(\bx) := \prod_{j=1}^d F_r(x_j)
$$
and
$$
\bW^r_p := \{f:f=\varphi\ast F_r,\quad \|\varphi\|_p \le 1\}.
$$
For $f\in \bW^r_p$ we will denote $f^{(r)} :=\varphi$ where $\varphi$ is such that $f=\varphi\ast F_r$.

The main results of Section 2 are the following two theorems. We use the notation 
$\beta:=\beta(q,p):= 1/q-1/p$ and $\eta:=\eta(q):= 1/q-1/2$. In the case of trigonometric system $\Tr^d$ we drop it from the notation:
$$
\sigma_m(\bW)_p := \sigma_m(\bW,\Tr^d)_p.
$$
\begin{Theorem}\label{T2.8I} We have
$$
 \sigma_m(\bW^r_q)_{p}
  \asymp  \left\{\begin{array}{ll} m^{-r+\beta}(\log m)^{(d-1)(r-2\beta)}, & 1<q\le p\le 2,\quad r>2\beta,\\
 m^{-r+\eta}(\log m)^{(d-1)(r-2\eta)}, & 1<q\le 2\le p<\infty,\quad r>1/q,\\ 
 m^{-r}(\log m)^{r(d-1)}, & 2\le q\le p<\infty, \quad r>1/2.\end{array} \right.
$$
\end{Theorem}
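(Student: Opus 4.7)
The plan is to prove matching upper and lower bounds on $\sigma_m(\bW^r_q)_p$ in each of the three regimes. For the upper bounds I would combine classical hyperbolic cross approximation with greedy-style selection inside dyadic blocks, then invoke Theorem~\ref{TC} to promote the result to a constructive one via the WCGA. For the lower bounds I would construct explicit ``fooling'' functions supported on the step hyperbolic cross $Q_n$ and use orthogonality of the dyadic blocks together with Littlewood--Paley $L_p$ equivalences.

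More precisely, for the upper bounds in Cases~2 and~3 ($p\ge 2$), given $f\in\bW^r_q$, decompose $f$ via Littlewood--Paley blocks $\delta_{\bs}(f)$. For $\|\bs\|_1\le n$ with $2^n n^{d-1}\asymp m$, keep each whole block $\delta_{\bs}(f)$; for $\|\bs\|_1>n$, select the $m$ largest individual coefficients inside the blocks, estimating the $L_p$ norms of the discarded terms via Bernstein--Nikol'skii inequalities on $\Tr(\rho(\bs))$. A careful bookkeeping of the block contributions, weighted by $2^{-r\|\bs\|_1}$ to reflect the smoothness, yields the target rates $m^{-r+\eta}(\log m)^{(d-1)(r-2\eta)}$ in Case~2 and $m^{-r}(\log m)^{r(d-1)}$ in Case~3. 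Theorem~\ref{TC} then guarantees that WCGA with $\asymp m\log m$ steps achieves the same error (after passing $m\to m/\log m$ in the bookkeeping), producing a \emph{constructive} upper bound. In Case~1 ($p\le 2$) Theorem~\ref{TC} does not apply and the constructive claim is weakened, but the same block-selection argument still gives the $\sigma_m$ upper bound, the exponent $r-2\beta$ on the logarithm emerging from the $\ell_q$--$\ell_p$ loss on each block of cardinality $\asymp 2^{\|\bs\|_1}$.

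For the lower bounds, I would test the supremum on a normalized sum of de~la~Vallée~Poussin-type blocks,
\[
\phi_n(\bx) := C\, 2^{-nr}\, n^{-(d-1)/q'}\sum_{\|\bs\|_1=n}\prod_{j=1}^d \V_{s_j}(x_j),
\]
with $C$ chosen so that $\|\phi_n^{(r)}\|_q\le 1$, so $\phi_n\in\bW^r_q$. Any $m$-term trigonometric polynomial with $m\le \tfrac12|Q_n|$ must miss at least half of the spectrum of $\phi_n$; orthogonality of the blocks plus a volume/duality argument in $L_p$ (as in \cite{T29}, \cite{Tappr}) translates this into the claimed lower bound on $\|\phi_n-s\|_p$. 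The main technical obstacle will be matching the precise logarithmic exponents---especially the nonstandard $(d-1)(r-2\beta)$ and $(d-1)(r-2\eta)$---which hinge on delicate manipulations of the constants in the Bernstein--Nikol'skii inequalities on each $\rho(\bs)$ together with the $n^{d-1}$ counting of blocks with $\|\bs\|_1=n$, and on checking that the upper and lower constructions are tuned to the same level $n$.
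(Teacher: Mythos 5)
There is a genuine gap in your upper-bound argument for the cases $p\ge 2$, and it is exactly where the main difficulty of the theorem lies. Selecting the $m$ largest Fourier coefficients inside the blocks with $\|\bs\|_1>n$ cannot produce the factor $m^{-1/2}$ that drives the exponents $-r+\eta=-r+1/q-1/2$ (Case 2) and $-r$ (Case 3). If $g$ is the tail after thresholding a function with $\|f\|_A=M$, then each remaining coefficient is at most $M/m$ and $\|g\|_2\le Mm^{-1/2}$, but in $L_p$ with $p>2$ interpolation only gives $\|g\|_p\le\|g\|_2^{2/p}\|g\|_\infty^{1-2/p}\le Mm^{-1/p}$, and passing through Nikol'skii on $\Tr(\rho(\bs))$ instead costs a factor $2^{\|\bs\|_1(1/2-1/p)}$; either way you land at rates strictly worse than claimed. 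The inequality $\sigma_m(t,\Tr)_p\ll m^{-1/2}p^{1/2}\|t\|_A$ for $2\le p<\infty$ is a nontrivial fact whose classical proofs are probabilistic (hence nonconstructive); the whole point of the paper's argument is that the Incremental Algorithm (Theorems \ref{T2.2}, \ref{T2.5}, \ref{T2.5C}) realizes it constructively. The paper then estimates $\|f_l\|_A$ for the slices $f_l=\sum_{\|\bs\|_1=l}\delta_\bs(f)$ via Theorem \ref{A} applied to (\ref{2.9}), giving $\|f_l\|_A\ll 2^{-(r-1/q)l}l^{(d-1)(1-1/q)}$, and allocates budgets $m_l\asymp 2^{n-\mu(l-n)}l^{d-1}$ to the levels $l>n$ while keeping $S_n(f)$ in full (Lemma \ref{L2.1}). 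This $A$-norm mechanism is entirely absent from your proposal and cannot be replaced by coefficient thresholding plus Bernstein--Nikol'skii.

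Two further points. First, invoking Theorem \ref{TC} to ``promote'' the bound to a constructive one is both circular and lossy: \ref{TC} is a Lebesgue-type inequality comparing the WCGA residual to $\sigma_m(f)_p$, so it presupposes the bound on $\sigma_m$ and, because of the $m\log m$ iteration count, only yields the degraded rates of Theorem \ref{TCW}; the constructive realization of the exact rates comes from the separate method $A_m(\cdot,p,\mu)$ of Lemma \ref{L2.1}. Second, for the lower bounds the paper does not build a new fooling function: the middle line follows from Theorem \ref{T2.6} at $p=2$ (a known result of \cite{T29}), and the third line reduces, via $\sigma_m(\bW^r_q)_p\ge\sigma_m(\bW^r_q)_2$ with $q\ge2$, to the case $p\le q$ of Theorem \ref{baT2.3} from \cite{KTE1}; your single-function construction is in the right spirit for $p=2$ but would need the careful normalization and the counting/duality argument of the type used in the proof of Theorem \ref{T2.10}, and it is not needed here.
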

\begin{Theorem}\label{T2.9I} We have
$$
 \sigma_m(\bW^r_q)_{\infty}
  \ll  \left\{\begin{array}{ll}   m^{-r+\eta}(\log m)^{(d-1)(r-2\eta)+1/2}, & 1<q\le 2,\quad r>1/q,\\ 
 m^{-r}(\log m)^{r(d-1)+1/2}, & 2\le q<\infty, \quad r>1/2.\end{array} \right.
$$
\end{Theorem}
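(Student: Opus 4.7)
\textbf{Proof proposal for Theorem \ref{T2.9I}.} The plan is to combine Theorem \ref{TC} with a Nikolskii-type inequality to pass from $L_p$ to $L_\infty$, choosing $p \asymp \log m$ to optimize.

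First, I would truncate: choose $n$ so that $|Q_n| \asymp m(\log m)^{d-1}$ and split $f = S_{Q_n}(f) + (f - S_{Q_n}(f))$ for $f \in \bW^r_q$. Under the hypothesis $r > 1/q$, standard estimates on the $L_\infty$ error of hyperbolic cross truncation of $\bW^r_q$ already dominate the tail $\|f - S_{Q_n}(f)\|_\infty$ by the claimed right-hand side. Hence it suffices to approximate the hyperbolic cross polynomial $g := S_{Q_n}(f) \in \Tr(Q_n)$ by $m$-sparse trigonometric polynomials in $L_\infty$.

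Next, set $p := c\log m$ and apply Theorem \ref{TC} to $g$ in $L_p$. After $k := C(t,p,d)\, m \log m$ WCGA iterations, the algorithm produces a $k$-term trigonometric polynomial $G_k$ such that
$$
\|g - G_k\|_p \le C\,\sigma_m(g,\Tr^d)_p \ll m^{-r+\eta}(\log m)^{(d-1)(r-2\eta)},
$$
the final inequality coming from Theorem \ref{T2.8I} (in the range $1 < q \le 2 \le p < \infty$; the case $2 \le q$ is handled analogously). Since both $g$ and $G_k$ have spectra in a set of cardinality $\ll m(\log m)^{d-1}$ and $p = c\log m$, a Nikolskii-type inequality for trigonometric polynomials yields
$$
\|g - G_k\|_\infty \ll \sqrt{\log m}\,\|g - G_k\|_p,
$$
which provides the crucial $(\log m)^{1/2}$ factor. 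The total number of terms used is $k \ll m(\log m)^{c'}$ for some absolute $c'$; rescaling $m$ absorbs this polylogarithmic overhead into the exponent $(d-1)(r-2\eta)+1/2$ of the final bound.

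The principal obstacle is bookkeeping: one must choose the constant in $p = c \log m$ and track the polynomial-in-$p$ dependence of $C(t,p,d)$ from Theorem \ref{TC}, so that the final exponent of $\log m$ is exactly $(d-1)(r-2\eta)+\tfrac{1}{2}$ and not any larger power. The $\sqrt{\log m}$ surcharge, absent from the finite-$p$ bound in Theorem \ref{T2.8I}, is the characteristic price of passing to the uniform norm at the optimal $L_{\log m}$-scale of Nikolskii's inequality. This constructive route through WCGA replaces the nonconstructive methods (Gluskin's lemma, probabilistic arguments of Makovoz and Belinskii) used to obtain similar bounds in earlier work.
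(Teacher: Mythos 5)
Your proposal has genuine gaps, and the overall route (truncate, then invoke Theorem \ref{TC}) is not the one that yields the stated exponent; the paper instead derives Theorem \ref{T2.9I} from the $p=\infty$ bound (\ref{2.14}) of Lemma \ref{L2.1}, i.e.\ from a blockwise application of the Incremental Algorithm with $\|\cdot\|_A$ control. The first problem is the truncation step: with $|Q_n|\asymp m(\log m)^{d-1}$, i.e.\ $2^n\asymp m$, the best available bound for the tail is $\|f-S_{Q_n}(f)\|_\infty\ll \sum_{l>n}\|f_l\|_A\ll 2^{-n(r-1/q)}n^{(d-1)(1-1/q)}\asymp m^{-r+1/q}(\log m)^{(d-1)(1-1/q)}$, which exceeds the target $m^{-r+\eta}=m^{-r+1/q-1/2}$ by a factor $m^{1/2}$. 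So the tail is \emph{not} dominated by the claimed right-hand side; the whole point of nonlinear $m$-term approximation here is to beat linear hyperbolic cross truncation by exactly that $m^{-1/2}$. This is why Lemma \ref{L2.1} does not discard the shells $f_l$ with $l>n$ but assigns each a shrinking budget $m_l=[2^{n-\mu(l-n)}l^{d-1}]$ and approximates it greedily.

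The second problem is the passage through Theorem \ref{TC}: to realize $\sigma_m(g)_p$ the WCGA needs $C(t,p,d)\,m\ln(m+1)$ iterations, so an honest $m$-term approximant forces you to replace $m$ by $m/(C\log m)^{c'}$ inside $\sigma_m(\cdot)_p$, which for $\bW^r_q$ inflates the bound by $(\log m)^{c'(r-\eta)}$. This loss is \emph{not} absorbable into the exponent $(d-1)(r-2\eta)+1/2$; it is precisely the extra $(\log m)^{r-\eta}$ recorded in Theorem \ref{TCW}, and it is the reason the paper proves Theorems \ref{T2.8I} and \ref{T2.9I} by a different constructive method. Finally, you misattribute the source of the $(\log m)^{1/2}$: with $p\asymp\log m$ and a spectrum of cardinality polynomial in $m$, Nikol'skii gives $N^{1/p}=O(1)$, not $\sqrt{\log m}$. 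The half power of the logarithm actually comes from the $p$-dependence of the greedy convergence rate in $L_p$ --- the modulus-of-smoothness constant $\gamma\le p/2$ in (\ref{t6.2}) entering Theorem \ref{T2.2} as $\gamma^{1/2}\asymp p^{1/2}$ (see Theorem \ref{T2.4} and (\ref{2.14'})) --- i.e.\ from the implied constant of the $L_p$ sparse approximation bound, which your argument leaves untracked (Theorem \ref{T2.8I} as stated gives no control of its constant as $p\to\infty$). To repair the proposal you would essentially have to reconstruct the paper's Lemma \ref{L2.1}.
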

The case $1<q\le p\le 2$ in Theorem \ref{T2.8I}, which corresponds to the first line, was proved in \cite{T29} (see also \cite{Tmon}, Ch.4). The proofs from \cite{T29} and \cite{Tmon} are constructive. In Section 2 we concentrate on the case $p\ge 2$. We use recently developed techniques on greedy approximation in Banach spaces to prove Theorems \ref{T2.8I} and \ref{T2.9I}. It is important that greedy approximation allows us not only to prove the above theorems but also to provide a constructive way for building the corresponding $m$-term approximants. We give a precise formulation.
\begin{Theorem}\label{T2.8C} For $p\in(1,\infty)$ and $\mu>0$ there exist constructive methods $A_m(f,p,\mu)$, which provide for $f\in \bW^r_q$ an $m$-term approximation such that
$$
 \|f-A_m(f,p,\mu)\|_p
 $$
 $$
  \ll  \left\{\begin{array}{ll} m^{-r+\beta}(\log m)^{(d-1)(r-2\beta)}, & 1<q\le p\le 2,\quad r>2\beta+\mu,\\
 m^{-r+\eta}(\log m)^{(d-1)(r-2\eta)}, & 1<q\le 2\le p<\infty,\quad r>1/q+\mu,\\ 
 m^{-r}(\log m)^{r(d-1)}, & 2\le q\le p<\infty, \quad r>1/2+\mu.\end{array} \right.
$$
\end{Theorem}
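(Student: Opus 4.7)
The first case $1<q\le p\le 2$ of Theorem~\ref{T2.8C} already follows from the explicit construction of \cite{T29} (see also \cite{Tmon}, Ch.~4), so my focus is on the two cases with $p\ge 2$, where the WCGA machinery of Theorem~\ref{TC} is the new input. My plan is to define $A_m(f,p,\mu)$ as the output $G_N$ of the WCGA applied to $f$ in the real Banach space $L_p(\T^d)$ against the normalized real trigonometric dictionary $\mathcal{RT}_p^d$ (with weakness parameter $t=1$), for a carefully chosen number of iterations $N=N(m,\mu)\le m$.

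The argument has three ingredients. First, the Lebesgue-type inequality from Theorem~\ref{TC}: $\|f_N\|_p\le C\,\sigma_k(f,\Tr^d)_p$ as soon as $N\ge C(1,p,d)\,k\ln(k+1)$. Second, the non-constructive class bound from Theorem~\ref{T2.8I}, which gives $\sigma_k(\bW^r_q)_p\ll k^{-r+\eta}(\log k)^{(d-1)(r-2\eta)}$ in the second case (and $\sigma_k(\bW^r_q)_p\ll k^{-r}(\log k)^{r(d-1)}$ in the third). Third, the slack $\mu>0$ in the hypothesis $r>\tau+\mu$, where $\tau\in\{1/q,1/2\}$ is the relevant critical threshold. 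Concretely, I would fix $n$ with $2^n\asymp m$ so that $|Q_n|\asymp m(\log m)^{d-1}$, replace $f$ by its hyperbolic-cross projection $S_{Q_{n'}}(f)$ for an enlarged index $n'=n+C_\mu\log n$ (with $C_\mu$ large depending on $\mu$), chosen so that the truncation error $\|f-S_{Q_{n'}}(f)\|_p$ is polynomially smaller in $m$ than the target rate (achievable because $r>\tau+\mu$), and then run WCGA on this polynomial.

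The main obstacle is the logarithmic overhead built into Theorem~\ref{TC}. A direct application with $N=m$ forces $k\asymp m/\log m$ and, after substituting the class bound, yields an upper estimate of order $m^{-r+\eta}(\log m)^{(r-\eta)+(d-1)(r-2\eta)}$, which is worse than the target by a factor $(\log m)^{r-\eta}$. Eliminating this $(\log m)^{r-\eta}$ loss is the crux; I would do so by combining the hyperbolic-cross truncation above with a sharpening of the Lebesgue inequality tailored to trigonometric polynomials on $Q_{n'}$, where Nikolskii-type inequalities on $\Tr(Q_{n'})$ become available. The extra smoothness $\mu$ is exactly what makes both the truncation cheap and the sharpening quantitative, so that the final $m$-term approximant $A_m(f,p,\mu)$ meets the target rate with a constant depending only on $p,q,d,\mu$.
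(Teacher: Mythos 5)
Your plan rests on the WCGA Lebesgue-type inequality (Theorem~\ref{TC}) plus the class bound of Theorem~\ref{T2.8I}, and you correctly compute that this route loses a factor $(\log m)^{r-\eta}$: setting $N\asymp k\ln(k+1)$ forces $k\asymp m/\log m$, and substituting into $\sigma_k(\bW^r_q)_p$ gives exactly the bound of Theorem~\ref{TCW}, not of Theorem~\ref{T2.8C}. The step you offer to close this gap --- ``a sharpening of the Lebesgue inequality tailored to trigonometric polynomials on $\Tr(Q_{n'})$, where Nikolskii-type inequalities become available'' --- is the entire difficulty, and it is not substantiated. Restricting $f$ to a hyperbolic cross does not change the dictionary or the WCGA's per-iteration behaviour, so there is no visible mechanism by which the $k\ln(k+1)$ iteration count drops to $O(k)$. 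Moreover, the extra smoothness $\mu$ cannot help here: in your setup $\mu$ buys a \emph{polynomial-in-$2^{l-n}$} gain in the tail of the hyperbolic-cross truncation, whereas the loss you need to remove is a fixed power of $\log m$ at the main scale. These are different currencies, and the truncation level $n'=n+C_\mu\log n$ does not convert one into the other. As written, the argument proves Theorem~\ref{TCW} but not Theorem~\ref{T2.8C}.

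The paper avoids the Lebesgue inequality entirely. It decomposes $f=S_n(f)+\sum_{l>n}f_l$ with $f_l=\sum_{\|\bs\|_1=l}\delta_\bs(f)$, keeps the full hyperbolic-cross projection $S_n(f)$ ($\asymp 2^nn^{d-1}$ terms), and approximates each block $f_l$ separately by a greedy method ($G^p_{m_l}$ from Theorem~\ref{T2.5C}, built on the Incremental Algorithm and Theorem~\ref{T2.2}), which gives the log-free rate $\|f_l-G^p_{m_l}(f_l)\|_p\ll m_l^{-1/2}\|f_l\|_A$ for $f_l$ in (a multiple of) $A_1(\D^\pm)$. The $A$-norm of $f_l$ is controlled by Theorem~\ref{A} together with $\|f_l\|_q\ll 2^{-lr}$, giving $\|f_l\|_A\ll 2^{-(r-1/q)l}l^{(d-1)(1-1/q)}$. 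The parameter $\mu$ enters only through the budget schedule $m_l=[2^{n-\mu(l-n)}l^{d-1}]$, and the hypothesis $r>1/q+\mu$ is precisely the condition $\mu<a:=r-1/q$ needed for the resulting geometric series $\sum_{l\ge n}2^{-(n-\mu(l-n))/2}2^{-al}l^{(d-1)(b-1/2)}$ to be dominated by its first term. If you want a constructive proof of Theorem~\ref{T2.8C}, you should switch to this block-by-block scheme (Lemma~\ref{L2.1}); the WCGA-plus-Lebesgue route appears to be intrinsically lossy by a power of $\log m$, which is why the paper states it separately as Theorem~\ref{TCW}.
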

Similar modification of Theorem \ref{T2.9I} holds for $p=\infty$.
We do not have matching lower bounds for the upper bounds in Theorem \ref{T2.9I} in the case of approximation in the uniform norm $L_\infty$. 
In Section 3 we use known results on the entropy numbers to prove one lower bound in the case of functions of two variables. We note that it is of interest for small smoothness: $r<1/2$. 

As a direct corollary of Theorems \ref{TC} and \ref{T2.8I} we obtain the following result.
\begin{Theorem}\label{TCW} Let $p\in [2,\infty)$. Apply the WCGA with weakness parameter $t\in (0,1]$ to $f\in L_p$ with respect to the real trigonometric system ${\mathcal R}{\mathcal T}_p^d$. If $f\in \bW^r_q$, then we have
$$
 \|f_m\|_{p}
  \ll  \left\{\begin{array}{ll}  
 m^{-r+\eta}(\log m)^{(d-1)(r-2\eta)+r-\eta}, & 1<q\le 2,\quad r>1/q,\\ 
 m^{-r}(\log m)^{rd}, & 2\le q<\infty, \quad r>1/2.\end{array} \right.
$$
\end{Theorem}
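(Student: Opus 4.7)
\textbf{Plan of proof of Theorem \ref{TCW}.} The author advertises Theorem \ref{TCW} as a direct corollary of Theorems \ref{TC} and \ref{T2.8I}, so my plan is simply to compose the two, matching scales by a $\log$-factor change of parameter. The only input beyond these two theorems is the elementary observation that WCGA residual norms are non-increasing, so that the Lebesgue-type inequality at index $C(t,p,d)n\ln(n+1)$ upper-bounds every residual at a larger index.

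The steps, in order, are as follows. First, since $\Phi_n\subset\Phi_{n+1}$ and $G_n$ is the $L_p$-best approximant from $\Phi_n$, the sequence $\|f_j\|_p$ is monotone non-increasing in $j$. Second, given the target index $m$, I pick the largest integer $n$ with $C(t,p,d)\,n\ln(n+1)\le m$; then $n\asymp m/\log m$ and $\log n\asymp \log m$. Monotonicity together with Theorem \ref{TC} applied to the real system $\mathcal{RT}_p^d$ gives
\[
\|f_m\|_p\;\le\;\|f_{C(t,p,d)\,n\ln(n+1)}\|_p\;\le\;C\,\sigma_n(f,\mathcal{RT}_p^d)_p\;\ll\;\sigma_n(\bW^r_q)_p,
\]
where the last step is taking the supremum over $\bW^r_q$ and using that $m$-term approximation from $\mathcal{RT}_p^d$ and from $\Tr^d$ agree up to absolute constants and a factor $2$ in the number of terms. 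Third, since $p\in[2,\infty)$, only the second and third lines of Theorem \ref{T2.8I} are relevant; I substitute the bound for $\sigma_n(\bW^r_q)_p$ and rewrite in terms of $m$.

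The arithmetic of log-exponent tracking is the only thing to carry out. With $n\asymp m/\log m$ one has $n^{-r+\eta}\asymp m^{-r+\eta}(\log m)^{r-\eta}$ in the range $1<q\le 2$, so the bound $n^{-r+\eta}(\log n)^{(d-1)(r-2\eta)}$ becomes $m^{-r+\eta}(\log m)^{(d-1)(r-2\eta)+(r-\eta)}$, matching the first line of Theorem \ref{TCW}. Similarly, for $2\le q<\infty$, $n^{-r}\asymp m^{-r}(\log m)^r$ converts $n^{-r}(\log n)^{r(d-1)}$ into $m^{-r}(\log m)^{r(d-1)+r}=m^{-r}(\log m)^{rd}$, which matches the second line. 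The hypotheses $r>1/q$ and $r>1/2$ are the ones from Theorem \ref{T2.8I} and need no tightening.

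The main (mild) obstacle is bookkeeping: verifying that the real-versus-complex trigonometric system and the unit-$L_p$-norm normalization are harmless for $m$-term rates (standard, and already implicit in the way Theorem \ref{TC} is stated for $\mathcal{RT}_p^d$ while Theorem \ref{T2.8I} is stated for $\Tr^d$), and checking the monotonicity step for the Chebyshev variant (which holds because nested subspaces force the best-approximation errors to decrease). Once these are in place, the theorem follows with no further work.
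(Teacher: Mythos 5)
Your proposal is correct and is exactly the argument the paper intends: it declares Theorem \ref{TCW} a direct corollary of Theorems \ref{TC} and \ref{T2.8I}, and your composition (monotonicity of the WCGA residuals via the nested subspaces $\Phi_n\subset\Phi_{n+1}$, the reparametrization $n\asymp m/\log m$, and the resulting extra factors $(\log m)^{r-\eta}$ and $(\log m)^{r}$) is precisely how that corollary is obtained. The log-exponent bookkeeping and the real-versus-complex remark are both handled correctly.
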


In Sections 4 and 5 we give some observations on numerical integration and approximate recovery of functions with mixed smoothness. We prove there some lower bounds, which show that we cannot improve accuracy of sparse grids methods with $\asymp 2^nn^{d-1}$ points in the grid by adding $2^n$ arbitrary points. In case of numerical integration these lower bounds provide best known lower bounds for optimal cubature formulas and for sparse grids based cubature formulas. 

Our technique is based on a combination of results from the hyperbolic cross approximation, which were obtained in 1980s -- 1990s, and  recent results on greedy approximation. 
We formulate some known results from the hyperbolic cross approximation theory, which will be used in our analysis. 
We begin with the problem of estimating $\|f\|_p$ in terms of
the array $\bigl\{ \|\delta_{\bs} (f)\|_q  \bigr\}$.
Here and below $p$ and $q$  are  scalars  such
that $1\le q,p\le \infty$. Let an array
$\varepsilon = \{\varepsilon_{\bs}\}$ be given, where
$\varepsilon_{\bs}\ge 0$, $\bs = (s_1 ,\dots,s_d)$,
and $s_j$ are nonnegative integers, $j = 1,\dots,d$.
We denote by $G(\varepsilon,q)$ and $F(\varepsilon,q)$
the following sets of functions
$(1\le q\le \infty)$:
$$
G(\varepsilon,q) := \bigl\{ f\in L_q  : \bigl\|\delta_{\bs} (f)
\bigr\|_q
\le\varepsilon_{\bs}\qquad\text{ for all }\bs\bigr\} ,
$$
$$
F(\varepsilon,q) := \bigl\{ f\in L_q  : \bigl\|\delta_{\bs} (f)
\bigr\|_q
\ge\varepsilon_{\bs}\qquad\text{ for all }\bs\bigr\}.
$$

The following theorem is from \cite{Tmon}, p.29. For the special case $q=2$, which will be used in this paper, see \cite{T29} and \cite{Tmon}, p.86. 

\begin{Theorem}\label{T1.1} The following relations hold:
\begin{equation}\label{1.1}
\sup_{f\in G(\varepsilon,q)}\|f\|_p \asymp\left(\sum_{\bs}
\varepsilon_{\bs}^p2^{\|\bs\|_1(p/q-1)}\right)^{1/p},
\qquad 1\le q < p < \infty ;
\end{equation}
\begin{equation}\label{1.2}
\inf_{f\in F(\varepsilon,q)}\|f\|_p \asymp\left(\sum_{\bs}
\varepsilon_{\bs}^p  2^{\|\bs\|_1(p/q-1)}\right)^{1/p},
\qquad 1< p < q\le\infty ,
\end{equation}
with constants independent of $\varepsilon$.
\end{Theorem}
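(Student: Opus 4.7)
The plan is to prove (1.1) as a matching pair of asymptotic upper and lower bounds and then to derive (1.2) from (1.1) by $L_p$--$L_{p'}$ duality. For the upper bound in (1.1), the first observation is Nikol'skii's inequality applied to each dyadic block: since $\delta_\bs(f)\in\Tr(\rho(\bs))$ with $|\rho(\bs)|\asymp 2^{\|\bs\|_1}$, one has $\|\delta_\bs(f)\|_p\ll 2^{\|\bs\|_1(1/q-1/p)}\varepsilon_\bs$, so that the $p$-th power of the claimed right-hand side coincides up to constants with $\sum_\bs\|\delta_\bs(f)\|_p^p$ in the Nikol'skii-extremal (Dirichlet-like) profile. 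The non-trivial task is to upgrade these block-wise estimates to a global $L_p$-bound of the sharp $\ell^p$ shape: a naive Littlewood-Paley plus Minkowski-in-$L_{p/2}$ argument yields only the looser $\ell^2$-type summation, which in the layer $\|\bs\|_1=n$ is worse by a factor of $n^{(d-1)(1/2-1/p)}$ when $p>2$. The standard way around this (as in \cite{Tmon}) is a layer decomposition $f=\sum_n f_n$ with $f_n=\sum_{\|\bs\|_1=n}\delta_\bs(f)$, combined in the distinguished case $q=2$ with Parseval inside the layer (or the appropriate Littlewood-Paley replacement when $q\neq 2$) and with the hyperbolic-cross Nikol'skii $\|f_n\|_p\ll(2^n n^{d-1})^{1/q-1/p}\|f_n\|_q$; a careful summation over layers exploiting the geometric growth of the weights then collapses the resulting double sum into the claimed $\ell^p$ form.

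For the lower bound in (1.1), I would construct an explicit extremizer $f=\sum_\bs V_\bs(\bx-\by_\bs)$, where $V_\bs$ is a Dirichlet-type kernel with spectrum in $\rho(\bs)$, normalized so that $\|V_\bs\|_q\asymp\varepsilon_\bs$. By the sharpness of Nikol'skii for the Dirichlet kernel one then has $\|V_\bs\|_p\asymp\varepsilon_\bs 2^{\|\bs\|_1(1/q-1/p)}$, so $\|V_\bs\|_p^p$ matches the summand in the claimed bound. Each $V_\bs$ is spatially concentrated on a box of volume $\asymp 2^{-\|\bs\|_1}$, and the shifts $\by_\bs$ can be chosen (explicitly or by a random-shift argument) so that these bumps are essentially pairwise disjoint in $\T^d$; disjointness then forces $\|f\|_p^p\gg\sum_\bs\|V_\bs\|_p^p$, while $\|\delta_\bs(f)\|_q=\|V_\bs\|_q\asymp\varepsilon_\bs$, so that $f$ (rescaled by a universal constant) lies in $G(\varepsilon,q)$ and realizes the lower bound.

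Finally, (1.2) will be derived from (1.1) by duality: the range $1<p<q\le\infty$ maps under $(p,q)\mapsto(q',p')$ into the range $1\le q'<p'<\infty$ of (1.1), and testing the dualized form of (1.1) against the multiplier $a_\bs=\varepsilon_\bs^{p-1}2^{\|\bs\|_1(p/q-1)}$ transforms the constraint $\|\delta_\bs(g)\|_q\ge\varepsilon_\bs$ into the asserted lower bound $\|g\|_p\gg(\sum_\bs\varepsilon_\bs^p 2^{\|\bs\|_1(p/q-1)})^{1/p}$; the matching upper bound on the infimum uses the extremizer produced by (1.1). The arithmetic of the weight $2^{\|\bs\|_1(p/q-1)}$ is invariant under this H\"older pairing, since $(p-1)p'=p$ and $p'(p/q-1)+(p'/q'-1)=p/q-1$. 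The main obstacle is the $\ell^p$-sharpening in the upper bound of (1.1) for $p>2$, which requires the layer analysis above together with careful log-power tracking through the hyperbolic-cross Nikol'skii constants.
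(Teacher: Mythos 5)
First, a remark on provenance: the paper does not prove Theorem \ref{T1.1} at all --- it quotes it from \cite{Tmon}, p.~29 --- so there is no in-paper proof to compare you against. Judged on its own terms, your outline gets the architecture right in two places: the duality between the upper bound in (\ref{1.1}) and the lower bound in (\ref{1.2}) (your weight arithmetic, resting on $1/q-1/p=1/p'-1/q'$, checks out), and the shifted, essentially disjointly supported kernel construction for the two extremal bounds (though for $q=1$ you should use Fej\'er or de la Vall\'ee Poussin kernels rather than Dirichlet kernels, since $\|D_N\|_1\asymp\log N$ spoils the sharp Nikol'skii constant). But there is a genuine gap at the one step everything else leans on: the upper bound in (\ref{1.1}) for $p>2$.

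Your proposed fix --- layer decomposition $f=\sum_n f_n$, hyperbolic-cross Nikol'skii for each layer, and ``careful summation over layers exploiting the geometric growth of the weights'' --- fails precisely in the critical case, and the theorem demands constants uniform in $\varepsilon$. Take $d=1$, $q=2$, $p=4$, $\varepsilon_s=2^{-s/4}$ for $s\le N$ and $0$ otherwise; the right-hand side of (\ref{1.1}) is $\asymp N^{1/4}$. Each block satisfies $\|\delta_s(f)\|_4\le 2^{s/4}\varepsilon_s= 1$, so the triangle inequality over layers gives $N$, and Littlewood--Paley plus Minkowski gives $N^{1/2}$; there is no geometric decay across layers to exploit because the weighted block norms are all comparable --- and this is exactly the profile on which the supremum is attained. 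No rearrangement of $\ell^2$ versus $\ell^p$ bookkeeping closes the gap from $N^{1/2}$ to $N^{1/4}$. The missing idea is a distribution-function (truncation) argument: for each level $y$ split $f=S_{n(y)}+R_{n(y)}$ with $n(y)$ chosen so that $\|S_{n(y)}\|_\infty\le y/2$ (using $\|\delta_\bs(f)\|_\infty\ll 2^{\|\bs\|_1/q}\varepsilon_\bs$), whence $|\{|f|>y\}|\le (2/y)^q\|R_{n(y)}\|_q^q$ by Chebyshev, and then integrate $p\int_0^\infty y^{p-1}|\{|f|>y\}|\,dy$; the interplay between the $L_\infty$ bound on the head and the $L_q$ bound on the tail is what produces the $\ell^p$-weighted sum (in the example it yields $\|f\|_4^4\ll\int_1^{c2^{N/4}}y^{-1}\,dy\asymp N$). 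Without this step, or an equivalent bilinear argument, the upper bound in (\ref{1.1}) for $p>2$ --- and hence, through your own duality reduction, the lower bound in (\ref{1.2}) --- remains unproved. (For $1<p\le 2$ your route does work, since there Littlewood--Paley combined with the pointwise inequality between the $\ell^2$ and $\ell^p$ square functions gives $\|f\|_p\ll\bigl(\sum_\bs\|\delta_\bs(f)\|_p^p\bigr)^{1/p}$ directly.)
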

We will need a corollary of Theorem \ref{T1.1} (see \cite{Tmon}, Ch.1, Theorem 2.2), which we formulate as a theorem.
\begin{Theorem}\label{A} Let $1<q\le 2$. For any $t\in \Tr(N)$ we have
$$
\|t\|_A:=\sum_\bk |\hat t(\bk)| \le C(q,d) N^{1/q} (\log N)^{(d-1)(1-1/q)}\|t\|_q.
$$
\end{Theorem}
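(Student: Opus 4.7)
The plan is to prove Theorem~\ref{A} by a duality argument that reduces the claim to an application of the upper bound in Theorem~\ref{T1.1}, equation~(\ref{1.1}). Set $q':=q/(q-1)$. Choose unimodular coefficients $(b_\bk)_{\bk\in\Gamma(N)}$ so that, for
$$
g(\bx):=\sum_{\bk\in\Gamma(N)}b_\bk e^{i(\bk,\bx)},
$$
one has $\|t\|_A=\langle t,g\rangle$. By Hölder's inequality $\|t\|_A\le\|t\|_q\|g\|_{q'}$, so it suffices to prove the extremal polynomial estimate
$$
\|g\|_{q'}\le C(q,d)\,N^{1/q}(\log N)^{(d-1)(1-1/q)}
$$
for any $g$ of the above form with $|b_\bk|\le 1$.

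To estimate $\|g\|_{q'}$ I use the dyadic blocks $\delta_\bs(g)$. By Parseval,
$$
\|\delta_\bs(g)\|_2=\Bigl(\sum_{\bk\in\rho(\bs)\cap\Gamma(N)}|b_\bk|^2\Bigr)^{1/2}\le|\rho(\bs)|^{1/2}\lesssim 2^{\|\bs\|_1/2}=:\varepsilon_\bs
$$
whenever $\rho(\bs)\cap\Gamma(N)\ne\emptyset$, and $\delta_\bs(g)=0$ otherwise. Now apply (\ref{1.1}) with the roles of $q$ and $p$ played by $2$ and $q'$, respectively, which is legitimate whenever $2<q'<\infty$, i.e., $1<q<2$. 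The theorem yields
$$
\|g\|_{q'}\lesssim\Bigl(\sum_\bs\varepsilon_\bs^{q'}\,2^{\|\bs\|_1(q'/2-1)}\Bigr)^{1/q'}\asymp\Bigl(\sum_{\bs:\,\rho(\bs)\cap\Gamma(N)\ne\emptyset}2^{\|\bs\|_1(q'-1)}\Bigr)^{1/q'}.
$$

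The remaining step is a routine count. Because $\rho(\bs)\cap\Gamma(N)\ne\emptyset$ forces $\|\bs\|_1\le\log_2 N+O(d)$, writing $n:=\lceil\log_2 N\rceil$ and using $|\{\bs:\|\bs\|_1=k\}|\asymp k^{d-1}$ gives
$$
\sum_{k\le n+O(1)}k^{d-1}\,2^{k(q'-1)}\asymp n^{d-1}\,2^{n(q'-1)}.
$$
Raising to the $1/q'$ power and invoking $1/q'=1-1/q$ produces precisely $N^{1/q}(\log N)^{(d-1)(1-1/q)}$, which is the desired bound. The endpoint $q=2$, formally excluded by the strict inequality $q<p$ in Theorem~\ref{T1.1}, is handled directly: Cauchy--Schwarz gives $\|g\|_2\le|\Gamma(N)|^{1/2}\lesssim N^{1/2}(\log N)^{(d-1)/2}$, matching the general formula at $q=2$. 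The only substantive step is the duality set-up together with the recognition that Theorem~\ref{T1.1} must be applied with $p$ and $q$ interchanged, so that $L_2$-control of the dyadic blocks of $g$ yields the required $L_{q'}$-estimate; once this swap is made, everything reduces to elementary dyadic summation.
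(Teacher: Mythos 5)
Your proof is correct. The paper itself does not write out an argument for Theorem~\ref{A} --- it presents the bound as a corollary of Theorem~\ref{T1.1} and cites the monograph --- and your derivation is exactly such a corollary, so you are on the intended route. The only stylistic difference is that you dualize: you pass to the extremal polynomial $g$ with unimodular coefficients on $\Gamma(N)$ and control $\|g\|_{q'}$ via the upper bound (\ref{1.1}) with the pair $(2,q')$, whereas the more common direct derivation applies the lower bound (\ref{1.2}) with the pair $(q,2)$ to $t$ itself, giving $\|t\|_q \gg \bigl(\sum_\bs \|\delta_\bs(t)\|_2^q\, 2^{\|\bs\|_1(q/2-1)}\bigr)^{1/q}$, and then estimates $\|t\|_A \le \sum_\bs 2^{\|\bs\|_1/2}\|\delta_\bs(t)\|_2$ by H\"older over $\bs$. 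The two arguments are dual to one another and both reduce to the same dyadic count $\sum_{k\le \log_2 N + O(d)} k^{d-1} 2^{k\alpha} \asymp (\log N)^{d-1} N^{\alpha}$. All the individual steps in your version check out: the frequency-support observation that $\rho(\bs)\cap\Gamma(N)\ne\emptyset$ forces $\|\bs\|_1\le \log_2 N + d$, the exponent arithmetic $1-1/q'=1/q$, and the separate Parseval treatment of the endpoint $q=2$ (where $q'=2$ falls outside the strict inequality $q<p$ in Theorem~\ref{T1.1}) are all handled correctly.
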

 Let
$$
\Pi(\bN,d) :=\bigl \{(a_1,\dots,a_d)\in \R^d  : |a_j|\le N_j,\
j = 1,\dots,d \bigr\} ,
$$
where $N_j$ are nonnegative integers and $\bN:=(N_1,\dots,N_d)$. We denote
$$
\Tr(\bN,d):=\{t:t = \sum_{\bk\in \Pi(\bN,d)} c_\bk e^{i(\bk,\bx)}\}.
$$
Then 
$$
\dim \Tr(\bN,d) = \prod_{j=1}^d (2N_j  + 1) =: \vartheta(\bN).
$$

The following theorem is from \cite{TBook}, Ch.2, Theorem 1.1 (see, also, \cite{TE2}). 
\begin{Theorem}\label{T1.2} Let $\varepsilon > 0$ and a subspace
$\Psi\subset \Tr(\bN,d)$ be such that
$\dim \Psi\ge\varepsilon\dim \Tr(\bN,d)$.
Then there is a $t \in\Psi$ such that
$$
\|t\|_{\infty}  = 1  ,\qquad \|t\|_2\ge C(\varepsilon,d) > 0 .
$$
\end{Theorem}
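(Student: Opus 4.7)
The plan is to translate the problem to a finite-dimensional convex-geometry statement by a Marcinkiewicz-type discretization on $\T^d$ and then apply Vaaler's cube-slicing inequality.

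\medskip\noindent\emph{Step 1 (discretization).} Take the tensor-product uniform grid $\{\xi_j\}_{j=1}^M\subset\T^d$ with $4N_i+1$ equally spaced nodes along the $i$-th coordinate, so that $M\asymp\vartheta(\bN)$ with implicit constants depending only on $d$. For every $t\in\Tr(\bN,d)$ the oversampled grid gives the exact Plancherel identity
\[
M^{-1}\sum_{j=1}^{M}|t(\xi_j)|^2=\|t\|_2^2,
\]
while Bernstein's inequality (the grid has mesh of order $N_i^{-1}$ in each coordinate) gives $\|t\|_\infty\le C(d)\max_j|t(\xi_j)|$. Consequently the evaluation map $\Phi:t\mapsto(t(\xi_j))_j$ is an isomorphism of $\Psi$ onto a subspace $W\subset\R^M$ (splitting into real and imaginary parts if needed) with $\dim W\ge \varepsilon'(\varepsilon,d)\,M$.

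\medskip\noindent\emph{Step 2 (Vaaler plus volume comparison).} Vaaler's inequality asserts that every $k$-dimensional subspace $W\subset\R^M$ satisfies $\operatorname{vol}_k(W\cap[-1,1]^M)\ge 2^k$. Let $K:=W\cap[-1,1]^M$ and $r:=\sup\{\|v\|_{\ell_2}:v\in K\}$; then $K\subset rB_{\ell_2}^{W}$, so comparing with $\operatorname{vol}_k(B_{\ell_2}^{k})\asymp (2\pi e/k)^{k/2}$ (Stirling) yields $r^2\ge ck$ for an absolute constant $c>0$. Pick $v\in K$ with $\|v\|_{\ell_\infty}\le 1$ and $\|v\|_{\ell_2}^{2}\ge ck$.

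\medskip\noindent\emph{Step 3 (transfer and normalize).} For $t:=\Phi^{-1}(v)\in\Psi$ the two estimates of Step~1 give $\|t\|_2^2=M^{-1}\|v\|_{\ell_2}^2\ge ck/M\ge c\,\varepsilon'$ and $\|t\|_\infty\le C(d)$. Rescaling $t\mapsto t/\|t\|_\infty$ produces $\|t\|_\infty=1$ and $\|t\|_2\ge C(\varepsilon,d)>0$.

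\medskip The geometric heart of the argument is Vaaler's cube-slicing inequality, which is applied as a black box; the real work is in Step~1, where we must discretize the two norms in a compatible way. The delicate point is to preserve the exact Plancherel identity for $L_2$ (which forces a grid adapted to the frequencies in $\Pi(\bN,d)$) and a Bernstein-based equivalence for $L_\infty$ simultaneously, with $M\asymp\vartheta(\bN)$, so that the dimension ratio $\dim\Psi/M$ survives the transfer. In the complex-valued case one splits $W$ into real and imaginary parts and applies Vaaler to the resulting $(2k)$-dimensional real subspace of $\R^{2M}$, at the cost of absorbing a further constant factor into $C(\varepsilon,d)$.
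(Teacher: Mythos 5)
Your argument is correct, and it is essentially the proof behind the cited source: the paper itself does not prove Theorem \ref{T1.2} but quotes it from \cite{TBook} (Ch.~2, Theorem 1.1) and \cite{TE2}, where the result is obtained by exactly this combination of a Marcinkiewicz-type discretization with Vaaler's theorem on sections of the cube and a volume comparison with the Euclidean ball. The one quantitative wrinkle is in Step~1: with only $4N_i+1$ nodes per coordinate the naive Bernstein bound gives a total deviation of order $d\pi/4$ times $\|t\|_\infty$, which is not a contraction for $d\ge 2$, so you should either refine the grid by a factor depending on $d$ (this keeps $M\asymp_d\vartheta(\bN)$ and hence does not harm the dimension ratio) or invoke the standard de la Vall\'ee Poussin kernel representation to get $\|t\|_\infty\le C(d)\max_j|t(\xi_j)|$ on the stated grid.
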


\section{Sparse approximation}

   For a Banach space $X$ we define the modulus of smoothness
$$
\rho(u) := \sup_{\|x\|=\|y\|=1}(\frac{1}{2}(\|x+uy\|+\|x-uy\|)-1).
$$
The uniformly smooth Banach space is the one with the property
$$
\lim_{u\to 0}\rho(u)/u =0.
$$

It is well known (see for instance \cite{DGDS}, Lemma B.1) that in the case $X=L_p$, 
$1\le p < \infty$, we have
\begin{equation}\label{t6.2}
\rho(u) \le \begin{cases} u^p/p & \text{if}\quad 1\le p\le 2 ,\\
(p-1)u^2/2 & \text{if}\quad 2\le p<\infty. \end{cases}     
\end{equation}
 
 Denote by $A_1({\mathcal D}):=A_1(\Di,X)$ the closure in $X$ of the convex hull of ${\mathcal D}$.
 The following theorem from \cite{T8} gives the rate of convergence of the WCGA for $f$ in $A_1({\mathcal D^\pm})$.
\begin{Theorem}\label{T2.1} Let $X$ be a uniformly smooth Banach space with the modulus of smoothness $\rho(u) \le \gamma u^q$, $1<q\le 2$. Then for $t\in (0,1]$ we have for any $f\in A_1({\mathcal D^\pm})$ that 
$$
\|f-G^{c,t}_m(f,{\mathcal D})\| \le C(q,\gamma)(1+ m t^p)^{-1/p},\quad p:= \frac{q}{q-1},
$$
with a constant $C(q,\gamma)$ which may depend only on $q$ and $\gamma$.
\end{Theorem}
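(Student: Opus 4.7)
\medskip

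\noindent\textbf{Proof sketch (proposal).} The plan is the classical three-ingredient scheme for greedy-type rates in smooth Banach spaces: lower bound on the selection functional using membership in $A_1(\D^\pm)$, upper bound on one-step progress using the modulus of smoothness, then conversion of the resulting one-step recursion into the $m^{-1/p}$ decay. Put $a_m:=\|f_m\|$ and let $F_m:=F_{f_m}$ be a norming functional of $f_m$. Since $G_{m-1}$ is a best approximant to $f$ from $\Phi_{m-1}$, the peak functional $F_{m-1}$ annihilates $\Phi_{m-1}$, so $F_{m-1}(f)=F_{m-1}(f_{m-1})=\|f_{m-1}\|=a_{m-1}$. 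If $f\in A_1(\D^\pm)$, write (up to $\varepsilon$) $f=\sum_i b_i h_i$ with $h_i\in\D$ and $\sum_i|b_i|\le 1$; then
\[
a_{m-1}=F_{m-1}(f)\le \sum_i|b_i|\,|F_{m-1}(h_i)|\le \sup_{g\in\D}|F_{m-1}(g)|.
\]
Combined with step~1) of the WCGA this yields
\begin{equation}\label{P.1}
|F_{m-1}(\varphi_m)|\ge t\sup_{g\in\D}|F_{m-1}(g)|\ge t\,a_{m-1}.
\end{equation}

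Next I would derive the one-step inequality from the modulus of smoothness. From the definition of $\rho$, applied to the unit vectors $f_{m-1}/a_{m-1}$ and $\varphi_m$, one gets for every $\lambda>0$
\[
\|f_{m-1}+\lambda\varphi_m\|+\|f_{m-1}-\lambda\varphi_m\|\le 2a_{m-1}\bigl(1+\rho(\lambda/a_{m-1})\bigr).
\]
Bounding one of the summands below by the peak functional, $\|f_{m-1}\pm\lambda\varphi_m\|\ge a_{m-1}\pm\lambda F_{m-1}(\varphi_m)$, and choosing the sign $\varepsilon\in\{\pm1\}$ so that $\varepsilon F_{m-1}(\varphi_m)=|F_{m-1}(\varphi_m)|$, I obtain
\[
\|f_{m-1}-\lambda\varepsilon\varphi_m\|\le a_{m-1}-\lambda|F_{m-1}(\varphi_m)|+2a_{m-1}\rho(\lambda/a_{m-1}).
\]
Since $G_{m-1}+\lambda\varepsilon\varphi_m\in\Phi_m$ and $G_m$ is the best approximant there, $a_m\le\|f_{m-1}-\lambda\varepsilon\varphi_m\|$. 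Plugging in \eqref{P.1} and $\rho(u)\le\gamma u^q$,
\[
a_m\le a_{m-1}-\lambda t\,a_{m-1}+2\gamma\lambda^q a_{m-1}^{1-q}.
\]

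Optimizing over $\lambda$ (with optimum $\lambda_*=(t/(2\gamma q))^{p-1}a_{m-1}^{p}$ and using $(p-1)(q-1)=1$, $p+q=pq$), the RHS becomes $a_{m-1}\bigl(1-c(q,\gamma)\,t^p a_{m-1}^{p}\bigr)$ for an explicit constant $c(q,\gamma)>0$. Raising to the $p$-th power and using the elementary inequality $(1-x)^p\le 1-x$ for $x\in[0,1]$, $p\ge 1$, with $b_m:=a_m^p$ I get the scalar recursion $b_m\le b_{m-1}-c(q,\gamma)\,t^p\,b_{m-1}^{2}$. The standard inversion trick
\[
\frac{1}{b_m}\ge\frac{1}{b_{m-1}(1-c t^p b_{m-1})}\ge\frac{1}{b_{m-1}}+c\,t^p
\]
telescopes to $1/b_m\ge 1/b_0+c\,t^p m$, and since $b_0=\|f\|^p$ is bounded by a constant depending only on $q$ (because $f\in A_1(\D^\pm)$ gives $\|f\|\le 1$), one concludes $a_m\le C(q,\gamma)(1+mt^p)^{-1/p}$.

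\medskip

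\noindent\textbf{Main obstacle.} The only non-routine step is making the two-sided smoothness identity produce a clean additive bound $\|f_{m-1}-\lambda\varepsilon\varphi_m\|\le a_{m-1}-\lambda|F_{m-1}(\varphi_m)|+2a_{m-1}\rho(\lambda/a_{m-1})$; once this is in hand, everything else is an optimization over $\lambda$ and a scalar recursion. A minor technical point is that membership $f\in A_1(\D^\pm)$, as opposed to $A_1(\D)$, is what lets one freely choose the sign $\varepsilon$ so that both the selection inequality \eqref{P.1} and the one-step bound are simultaneously tight.
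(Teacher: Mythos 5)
The paper does not actually prove Theorem \ref{T2.1}; it is quoted from \cite{T8}, and your argument is precisely the standard proof given there (lower bound on the selection functional from membership in $A_1(\D^\pm)$, one-step estimate via the two-sided modulus-of-smoothness inequality with the norming functional, optimization in $\lambda$, and the scalar recursion $b_m\le b_{m-1}-ct^pb_{m-1}^2$ inverted by telescoping), so it is correct and takes the same approach as the cited source. One cosmetic remark: the role of $\D^\pm$ versus $\D$ is only to enlarge the class $A_1(\D^\pm)$ — the sign choice $\varepsilon$ in the one-step bound is available in either case simply because $\Phi_m$ is a linear subspace — and the constraint $x=c\,t^pa_{m-1}^p\in[0,1]$ needed for $(1-x)^p\le 1-x$ is automatic since the optimized upper bound dominates a norm and is therefore nonnegative.
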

\begin{Remark}\label{R2.1} It follows from the proof of Theorem \ref{T2.1} that $C(q,\gamma) \le C(q)\gamma^{1/q}$.
\end{Remark}

We proceed to  the Incremental Greedy Algorithm (see \cite{T12} and \cite{Tbook}, Chapter 6).       Let $\ep=\{\ep_n\}_{n=1}^\infty $, $\ep_n> 0$, $n=1,2,\dots$ . 

 {\bf Incremental Algorithm with schedule $\ep$ (IA($\ep$)).} 
  Denote $f_0^{i,\ep}:= f$ and $G_0^{i,\ep} :=0$. Then, for each $m\ge 1$ we have the following inductive definition.

(1) $\ff_m^{i,\ep} \in \Di$ is any element satisfying
$$
F_{f_{m-1}^{i,\ep}}(\ff_m^{i,\ep}-f) \ge -\ep_m.
$$

(2) Define
$$
G_m^{i,\ep}:= (1-1/m)G_{m-1}^{i,\ep} +\ff_m^{i,\ep}/m.
$$

(3) Let
$$
f_m^{i,\ep} := f- G_m^{i,\ep}.
$$

\begin{Theorem}\label{T2.2} Let $X$ be a uniformly smooth Banach space with  modulus of smoothness $\rho(u)\le \gamma u^q$, $1<q\le 2$. Define
$$
\ep_n := v\gamma ^{1/q}n^{-1/p},\qquad p=\frac{q}{q-1},\quad n=1,2,\dots .
$$
Then, for any $f\in A_1(\D)$,   we have
$$
\|f_m^{i,\ep}\| \le C(v) \gamma^{1/q}m^{-1/p},\qquad m=1,2\dots.
$$
\end{Theorem}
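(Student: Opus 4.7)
The plan is to establish a one-step recursive inequality for $a_m := \|f_m^{i,\ep}\|$ and then close it by induction to get $a_m \le C(v)\gamma^{1/q} m^{-1/p}$.

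First I will extract the modulus-of-smoothness inequality: homogenizing the definition of $\rho$ at $x/\|x\|$, $y/\|y\|$ with $u = \|y\|/\|x\|$, then using $\|x - y\| \ge \|x\| - F_x(y)$ to eliminate the symmetric term, I obtain for $x \ne 0$
$$
\|x+y\| \;\le\; \|x\| + F_x(y) + 2\|x\|\rho(\|y\|/\|x\|) \;\le\; \|x\| + F_x(y) + 2\gamma \|x\|^{1-q}\|y\|^q.
$$

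Next, I will rewrite the algorithm's update as $f_m^{i,\ep} = f_{m-1}^{i,\ep} + (G_{m-1}^{i,\ep} - \ff_m^{i,\ep})/m$. A quick induction on the update rule shows that each $G_{m-1}^{i,\ep}$ is a convex combination of dictionary elements, so the increment has norm at most $2/m$. The IA condition $F_{f_{m-1}^{i,\ep}}(\ff_m^{i,\ep} - f) \ge -\ep_m$ together with $F_{f_{m-1}^{i,\ep}}(f_{m-1}^{i,\ep}) = a_{m-1}$ gives $F_{f_{m-1}^{i,\ep}}((G_{m-1}^{i,\ep} - \ff_m^{i,\ep})/m) \le (\ep_m - a_{m-1})/m$, and substituting into the smoothness inequality yields
$$
a_m \;\le\; a_{m-1}(1 - 1/m) + \ep_m/m + 2^{q+1}\gamma\, a_{m-1}^{1-q}\, m^{-q}.
$$

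With $\ep_m = v\gamma^{1/q} m^{-1/p}$, I will prove $a_m \le A m^{-1/p}$ with $A = C(v)\gamma^{1/q}$ by induction on $m$. Assuming $a_{m-1} \le A(m-1)^{-1/p}$, the main term $a_{m-1}(1-1/m)$ falls short of $Am^{-1/p}$ by at least $A/(q m^{1+1/p})$, using the elementary bound $m^{1/q} - (m-1)^{1/q} \ge m^{-1/p}/q$. The induction step then reduces to an algebraic condition of the form $A/q \ge q v\gamma^{1/q} + C_q\,\gamma A^{1-q}$, which is satisfiable by choosing $A = C(v)\gamma^{1/q}$ large (since $1 - q < 0$ makes the right-hand side bounded as $A \to \infty$); the base case is handled by the universal bound $a_m \le 2$, valid since $\|G_m^{i,\ep}\| \le 1$ and $\|f\| \le 1$, which already dominates $Am^{-1/p}$ for $m \le (A/2)^p$.

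The main subtlety I anticipate is that the factor $a_{m-1}^{1-q}$ is \emph{large} when $a_{m-1}$ is small (because $1 - q < 0$), so the refined recursion is only useful when $a_{m-1}$ is not too small. To handle this I will split the induction step into two cases: if $a_{m-1} \ge (A/2) m^{-1/p}$, the refined recursion closes the step as above; if $a_{m-1} < (A/2) m^{-1/p}$, I will fall back on the crude convexity bound $a_m \le (1 - 1/m) a_{m-1} + 2/m$ (immediate from $f_m^{i,\ep} = (1 - 1/m) f_{m-1}^{i,\ep} + (f - \ff_m^{i,\ep})/m$ and $\|f - \ff_m^{i,\ep}\| \le 2$) and check that this too stays below $A m^{-1/p}$ once $A$ is taken sufficiently large.
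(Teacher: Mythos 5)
The paper does not actually prove Theorem \ref{T2.2}; it is imported from \cite{T12} and \cite{Tbook}, Ch.~6. Your argument is a correct, self-contained reconstruction of essentially that standard proof: the same homogenized smoothness inequality $\|x+y\|\le\|x\|+F_x(y)+2\gamma\|x\|^{1-q}\|y\|^q$, the same one-step recursion for $a_m$ obtained from $f_m^{i,\ep}=f_{m-1}^{i,\ep}+(G_{m-1}^{i,\ep}-\ff_m^{i,\ep})/m$ with $\|G_{m-1}^{i,\ep}-\ff_m^{i,\ep}\|\le 2$, and induction on $a_m\le Am^{-1/p}$; your two-case split (refined recursion when $a_{m-1}\ge (A/2)m^{-1/p}$, crude convexity bound otherwise) correctly neutralizes the blow-up of $a_{m-1}^{1-q}$, and the exponents match exactly since $(q-1)/p-q=-1/p-1$. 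Three small points you should make explicit: (i) the step $\ff_m^{i,\ep}$ exists because $f\in A_1(\D)$ forces $\sup_{g\in\D}F_{f_{m-1}^{i,\ep}}(g-f)\ge 0$; (ii) your base case needs $A=C(v)\gamma^{1/q}\ge 4$, which is not automatic for arbitrary $\gamma>0$ --- you must invoke the fact that every Banach space (of dimension at least $2$) has $\rho(1)\ge\sqrt{2}-1$, so $\gamma\ge\sqrt{2}-1$ and a choice of $C(v)$ independent of $\gamma$ suffices; (iii) if $f_{m-1}^{i,\ep}=0$ the norming functional is undefined, but this situation falls under your second case, so nothing breaks. None of these is a genuine gap.
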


In \cite{T12} we demonstrated the power of the WCGA   in classical areas of harmonic analysis. The problem concerns the trigonometric $m$-term approximation in the uniform norm.  
The first result that indicated an advantage of $m$-term approximation with respect to the real trigonometric system ${\mathcal R}{\mathcal T}$  over approximation by trigonometric polynomials of order $m$ is due to Ismagilov \cite{I}
\begin{equation}\label{2.2}
\sigma_m(|\sin 2\pi x|,{\mathcal R}{\mathcal T} )_\infty \le C_\epsilon m^{-6/5+\epsilon},\quad\text{for any}\quad \epsilon >0.  
\end{equation}
Maiorov \cite{M} improved the estimate (\ref{2.2}):
\begin{equation}\label{2.3}
\sigma_m(|\sin 2\pi x|,{\mathcal R}{\mathcal T} )_\infty \asymp m^{-3/2}.  
\end{equation}

Both R.S. Ismagilov \cite{I} and V.E. Maiorov \cite{M} used constructive methods to get their estimates (\ref{2.2}) and (\ref{2.3}). V.E. Maiorov \cite{M} applied a number theoretical method based on Gaussian sums. 
 The key point of that technique can be formulated in terms of best $m$-term approximation of trigonometric polynomials. Let   ${\mathcal R}{\mathcal T}(N)$ be the subspace of real trigonometric polynomials of order $N$.
 Using the Gaussian sums one can prove (constructively) the estimate
\begin{equation}\label{2.4}
 \sigma_m(t,{\mathcal R}{\mathcal T})_\infty \le CN^{3/2}m^{-1}\|t\|_1,\quad t\in {\mathcal R}{\mathcal T}(N).  
\end{equation}
Denote  
$$
\|a_0+\sum_{k=1}^N(a_k\cos k2\pi x +b_k\sin k2\pi x)\|_A := |a_0| +\sum_{k=1}^N(|a_k|+|b_k|).
$$
We note that by simple inequality 
$$
\|t\|_A \le CN \|t\|_1 ,\quad t\in{\mathcal R}{ \mathcal T}(N),
$$
the estimate (\ref{2.4}) follows from the estimate
\begin{equation}\label{2.5}
\sigma_m(t,{\mathcal R}{\mathcal T})_\infty \le C(N^{1/2}/m)\|t\|_A,\quad t\in{\mathcal R}{ \mathcal T}(N).  
\end{equation}
Thus, (\ref{2.5}) is stronger than (\ref{2.4}). The following estimate was proved in \cite{DT1}
\begin{equation}\label{2.6}
\sigma_m(t,{\mathcal R}{\mathcal T})_\infty \le Cm^{-1/2}(\ln(1+ N/m))^{1/2}\|t\|_A,\quad t\in{\mathcal R}{ \mathcal T}(N).  
\end{equation}
In a way (\ref{2.6}) is much stronger than (\ref{2.5}) and (\ref{2.4}). The proof of (\ref{2.6}) from \cite{DT1} is not constructive. The estimate (\ref{2.6}) has been proved in \cite{DT1} with the help of a nonconstructive theorem of Gluskin \cite{G}. E.S. Belinskii \cite{Be1} used a probabilistic method to prove the following inequality: for $2\le p<\infty$
$$
\sigma_m(t,{\mathcal R}{\mathcal T})_\infty \le C(N/m)^{1/p}(\ln(1+ N/m))^{1/p}\|t\|_p,\quad t\in{\mathcal R}{ \mathcal T}(N).
$$
His proof is nonconstructive as well. 
In \cite{T12} we gave a constructive proof of (\ref{2.6}). The key ingredient of that proof is the WCGA. In the paper \cite{DKTe} we already pointed out that the WCGA provides a constructive proof of the estimate  \begin{equation}\label{2.7}
\sigma_m(f,{\mathcal R}{\mathcal T})_p \le C(p)m^{-1/2}\|f\|_A,\quad p\in [2,\infty).  
\end{equation}
The known proofs (before \cite{DKTe}) of (\ref{2.7}) were nonconstructive (see discussion in \cite{DKTe}, Section 5). Thus, the WCGA provides a way of building a good $m$-term approximant.  
We formulate here a result from \cite{T12}.  
 \begin{Theorem}\label{T2.3} There exists a constructive method $A(N,m)$ such that for any $t\in{\mathcal R}{ \mathcal T}(N)$ it provides an $m$-term trigonometric polynomial $A(N,m)(t)$ with the following approximation property
$$
\|t-A(N,m)(t)\|_\infty \le Cm^{-1/2}(\ln (1+N/m))^{1/2}\|t\|_A 
$$
with an absolute constant $C$.
\end{Theorem}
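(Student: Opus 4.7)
The plan is to run the Weak Chebyshev Greedy Algorithm (WCGA) in the space $L_p$ for a well-chosen $p\ge 2$, and then convert the resulting $L_p$ estimate on the residual to an $L_\infty$ estimate via the Nikol'skii inequality for trigonometric polynomials of order $N$. Normalize so that $\|t\|_A\le 1$. If $m\ge 2N+1$ simply set $A(N,m)(t):=t$ and there is nothing to prove, so assume $m\le 2N$. Fix a parameter $p\ge 2$ (to be chosen later) and let $\mathcal{D}_{p,N}$ be the finite dictionary consisting of the $L_p$-normalized real trigonometric basis functions with frequencies $|k|\le N$. Because the $L_p$-norms of $1,\cos(2\pi kx),\sin(2\pi kx)$ are bounded above and below by absolute constants uniformly in $k$ and $p$, the condition $\|t\|_A\le 1$ gives $t\in C_0\, A_1(\mathcal{D}_{p,N}^\pm)$ for some absolute constant $C_0$ independent of $p$.

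Next, apply the WCGA to $t$ in $L_p$ with dictionary $\mathcal{D}_{p,N}$ and weakness parameter $\tau=1$. The modulus of smoothness of $L_p$ satisfies $\rho(u)\le (p-1)u^2/2$ by (2.1), so Theorem 2.1 with $q=2$ and $\gamma=(p-1)/2$ (together with Remark 2.1, which gives $C(q,\gamma)\le C\sqrt{p}$) produces, after $m$ iterations,
$$
\|t-G_m\|_p \le C\sqrt{p/m}\,\|t\|_A.
$$
By construction both $G_m$ and the residual $r_m:=t-G_m$ lie in $\mathcal{RT}(N)$, and $G_m$ is a linear combination of at most $m$ trigonometric monomials. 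The Nikol'skii inequality $\|g\|_\infty\le C(2N+1)^{1/p}\|g\|_p$ valid for $g\in\mathcal{RT}(N)$ then yields
$$
\|r_m\|_\infty \le C\,N^{1/p}\sqrt{p/m}\,\|t\|_A.
$$
One then selects $p$ so as to control $N^{1/p}\sqrt{p}$ by $\sqrt{\ln(1+N/m)}$, producing the claimed rate.

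The hard part is precisely this optimization in $p$: the naive minimization of $N^{1/p}\sqrt{p}$ has optimum $p^\ast=2\ln N$ and only yields a $(\ln N)^{1/2}$ factor, which matches the target $(\ln(1+N/m))^{1/2}$ only in the regime $m\le\sqrt{N}$ (where $\ln N\asymp \ln(1+N/m)$). To recover the sharper factor in the regime $\sqrt{N}<m\le 2N$, I would introduce a two-stage construction: first, isolate the top $m/2$ Fourier coefficients of $t$ as an exact $m/2$-term trigonometric polynomial $T_1$, so that the tail $T_2:=t-T_1$ satisfies $\|T_2\|_A\le 1$ with $\max_k|\widehat{T_2}(k)|\le 2/m$ and hence $\|T_2\|_2\le\sqrt{2/m}$; second, approximate $T_2$ by $m/2$ further terms produced by the WCGA in $L_p$ with the smaller choice $p=\max(2,c\ln(1+N/m))$, exploiting the extra $L_2$-smallness of $T_2$ to absorb the Nikol'skii factor $N^{1/p}$ and recover $(\ln(1+N/m))^{1/2}$. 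The resulting method $A(N,m)$ is constructive, since both the thresholding step and each WCGA iteration (a best approximation in a finite-dimensional subspace of $\mathcal{RT}(N)$) are explicit, finite-dimensional operations depending only on $N$ and $m$.
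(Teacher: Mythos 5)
Your first stage is exactly the argument this paper uses to prove the weaker Theorem \ref{T2.4}: run a greedy algorithm (the paper uses the IA($\ep$), you use the WCGA --- either works via Theorems \ref{T2.1} and \ref{T2.2} together with Remark \ref{R2.1}) in $L_p$, pass to $L_\infty$ by the Nikol'skii inequality, and optimize $p\asymp \ln N$. This correctly yields $\|t-G_m\|_\infty \ll m^{-1/2}(\ln N)^{1/2}\|t\|_A$, and you are right that this settles Theorem \ref{T2.3} only in the regime where $\ln(1+N/m)\asymp \ln N$, e.g.\ $m\le \sqrt N$. Be aware that the paper itself does not prove Theorem \ref{T2.3} --- it quotes it from \cite{T12} --- and the regime you flag as ``the hard part'' is precisely where the additional idea of \cite{T12} lives.

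Your proposed second stage does not close that gap. After thresholding, the tail $T_2$ satisfies $\|T_2\|_A\le 1$ and $\|T_2\|_2\le (2/m)^{1/2}$; but this $L_2$-smallness is of exactly the same order as what $m/2$ greedy iterations already guarantee for an arbitrary element of $A_1(\D^\pm)$, so it buys nothing. Theorems \ref{T2.1} and \ref{T2.2} give no improvement of the residual bound from smallness of $\|f\|$, and no substantial improvement is available (in $L_2$ the residual after $k$ steps is only of order $\min(\|f\|_2,k^{-1/2})$, not a product of the two). Hence the best your scheme can produce is $\|r\|_\infty \le C N^{1/p}p^{1/2}(m/2)^{-1/2}$, and with $p=\max(2,c\ln(1+N/m))=O(1)$ in the regime $m\asymp N$ the factor $N^{1/p}$ is a positive power of $N$, so the bound is polynomially worse than the target $\asymp m^{-1/2}$. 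To see that something genuinely stronger is being claimed there, take $t=(2N+1)^{-1}\bigl(1+2\sum_{k=1}^{N}\cos 2\pi kx\bigr)$ and $m\asymp N$: every frequency not used by the approximant keeps its coefficient $(2N+1)^{-1}$, so $\|r\|_2\gg N^{-1/2}$, and achieving $\|r\|_\infty\ll N^{-1/2}$ forces the residual to be $L_\infty$-flat in the Rudin--Shapiro sense --- a cancellation that neither thresholding nor an $L_2$ bound on the tail can deliver, and that mere size information about $T_2$ cannot encode. So the regime $\sqrt N<m\le 2N$ remains unproved in your write-up; a different mechanism for converting the $N$-free $L_p$ bound of Theorem \ref{T2.5} into an $L_\infty$ bound without paying the full Nikol'skii factor is required, and that is the substance of the constructive proof in \cite{T12}.
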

However, the step 2) of the WCGA makes it difficult to control the coefficients of the approximant -- they are obtained through the Chebyshev projection of $f$ onto $\Phi_m$. 
This motivates us to consider the IA($\ep$) which gives explicit coefficients of the approximant. 
An advantage of the IA($\ep$) over other greedy-type algorithms is that the IA($\ep$) gives precise control of the coefficients of the approximant. For all approximants $G_m^{i,\ep}$ we have the  property $\|G_m^{i,\ep}\|_A=1$. Moreover, we know that all nonzero coefficients of the approximant have the form $a/m$ where $a$ is a natural number. 
We prove the following result.
 \begin{Theorem}\label{T2.4} For any $t\in{\mathcal R}{ \mathcal T}(\bN,d)$ the IA($\ep$) applied to $f:=t/\|t\|_A$ provides after $m$ iterations an $m$-term trigonometric polynomial \newline$G_m(t):=G^{i,\ep}_m(f)\|t\|_A$ with the following approximation property
$$
\|t-G_m(t)\|_\infty \le C(d)(\mb)^{-1/2}(\ln \vartheta(\bN))^{1/2}\|t\|_A,
$$
$$
 \mb:=\max(1,m), \quad \|G_m(t)\|_A =\|t\|_A, 
$$
with a  constant $C(d)$, which may depend only on $d$.
\end{Theorem}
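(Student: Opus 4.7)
The plan is to run IA$(\ep)$ in an intermediate $L_P$ space for some $P\ge 2$, lift the resulting $L_P$-rate to $L_\infty$ via Nikolskii's inequality on ${\mathcal R}{\mathcal T}(\bN,d)$, and then optimize over $P$. I first normalize by $f := t/\|t\|_A$. Expanding in the un-normalized real trigonometric system, $f$ is a convex combination of signed basis functions with frequencies in $\Pi(\bN,d)$. For $P\ge 2$ each such basis function has $L_P$-norm at most $1$ (since $\|\sin 2\pi x\|_P=\|\cos 2\pi x\|_P\le 1$, and by tensor product in the $d$-variate case), so after rescaling to the $L_P$-normalized system ${\mathcal R}{\mathcal T}_P^d$ we obtain $f\in A_1\!\bigl(({\mathcal R}{\mathcal T}_P^d)^\pm\bigr)$ with all constituents supported in $\Pi(\bN,d)$.

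Next I apply IA$(\ep)$ in $X=L_P$. By (\ref{t6.2}), $\rho(u)\le\gamma u^2$ with $q=2$ and $\gamma=(P-1)/2$; Theorem~\ref{T2.2} with the schedule $\ep_n\asymp \gamma^{1/2}n^{-1/2}$ gives
\[
\|f - G^{i,\ep}_m(f)\|_P \le C\sqrt{P-1}\,m^{-1/2}.
\]
At each step the condition $F_{f_{m-1}^{i,\ep}}(\ff_m-f)\ge -\ep_m$ can be satisfied by a signed basis element supported in $\Pi(\bN,d)$: because $f$ is itself a convex combination of such signed elements, the maximum of $F_{f_{m-1}^{i,\ep}}$ over them is at least $F_{f_{m-1}^{i,\ep}}(f)$. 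Hence $G^{i,\ep}_m(f)=\frac1m\sum_{j=1}^m \ff_j\in {\mathcal R}{\mathcal T}(\bN,d)$. Setting $G_m(t):=\|t\|_A\, G^{i,\ep}_m(f)$ and invoking the Nikolskii inequality $\|u\|_\infty\le C(d)\vartheta(\bN)^{1/P}\|u\|_P$ on ${\mathcal R}{\mathcal T}(\bN,d)$ gives
\[
\|t-G_m(t)\|_\infty\le C(d)\vartheta(\bN)^{1/P}\sqrt{P-1}\,m^{-1/2}\|t\|_A.
\]
Choosing $P=\max(2,\ln\vartheta(\bN))$ balances the factors so that $\vartheta(\bN)^{1/P}\le e$ and $\sqrt{P-1}\asymp (\ln\vartheta(\bN))^{1/2}$, which yields the claimed $L_\infty$ bound for $m\ge 1$; the trivial $m=0$ case is absorbed by the $\mb=\max(1,m)$ convention together with $\|t\|_\infty\le\|t\|_A$.

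The $A$-norm identity $\|G_m(t)\|_A=\|t\|_A$ follows from the IA recursion: $G^{i,\ep}_m(f)=\frac1m\sum_{j=1}^m \ff_j$ is an average of signed dictionary elements of unit $A$-norm, hence has $A$-norm at most $1$, and a sign-consistent selection of the picks (never choosing both $g$ and $-g$ for the same underlying basis function) achieves equality. The main technical obstacle is the bookkeeping in the $L_P$ step—matching the un-normalized real trigonometric system (natural for $\|\cdot\|_A$) with the $L_P$-normalized dictionary required by Theorem~\ref{T2.2}, and verifying that the peak-functional selection can be confined to frequencies in $\Pi(\bN,d)$; together these guarantee both the correct support of the approximants and the identity $\|G_m(t)\|_A=\|t\|_A$.
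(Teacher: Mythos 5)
Your proof is correct and follows essentially the same route as the paper: run IA$(\ep)$ in $L_P$ using the modulus-of-smoothness bound $\gamma\le P/2$ from (\ref{t6.2}) and Theorem \ref{T2.2}, lift to $L_\infty$ by Nikol'skii on $\Tr(\bN,d)$, and choose $P\asymp\ln\vartheta(\bN)$. The only wrinkle is that you invoke Theorem \ref{T2.2} for the $L_P$-normalized dictionary but then treat the picks as having unit $A$-norm; the paper instead runs the algorithm directly on the un-normalized system $\prod\cos k_jx_j\prod\sin k_jx_j$ (whose $L_P$-norms lie between $c(d)$ and $1$, so the same rate holds), which is what actually yields $\|G_m(t)\|_A=\|t\|_A$ exactly.
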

\begin{proof} It is clear that it is sufficient to prove Theorem \ref{T2.4} for $t\in{\mathcal R}{ \mathcal T}(\bN,d)$ with $\|t\|_A =1$. Then $t\in (A_1({\mathcal R}{ \mathcal T}(\bN,d)\cap{\mathcal R}{ \mathcal T}^d)^\pm,L_p)$ for all $p\in [2,\infty)$. Now, applying Theorem \ref{T2.2}   with 
$X=L_p$ and $\D^\pm$, where $\D:=\{ \ff_1,\ff_2,\dots,\ff_n\}$, $n=\vartheta(\bN)$, being the real trigonometric system 
$$
\ff_l:=\prod_{j\in E} \cos k_jx_j \prod_{j\in [1,d]\setminus E} \sin k_jx_j,
$$
we obtain that
\begin{equation}\label{2.8}
\|t -\sum_{j\in \La} \frac{a_j}{m}\ff_j\|_p \le C\gamma^{1/2}m^{-1/2},\qquad \sum_{j\in\La}|a_j| = m,
\end{equation}
where $\sum_{j\in \La} \frac{a_j}{m}\ff_j$ is the $G^{i,\ep}_m(t)$. 
By (\ref{t6.2}) we find $\gamma \le p/2$. Next, by the Nikol'skii inequality we get from (\ref{2.8})
$$
\|t -\sum_{j\in \La} \frac{a_j}{m}\ff_j\|_\infty \le C(d)N^{1/p}\| t -\sum_{j\in \La} \frac{a_j}{m}\ff_j\|_p \le C(d)p^{1/2}N^{1/p}m^{-1/2}.
$$
Choosing $p\asymp \ln N$ we obtain the desired in Theorem \ref{T2.4} bound.
\end{proof}

We point out that the above proof of Theorem \ref{T2.4} gives the following statement. 

 \begin{Theorem}\label{T2.5} Let $2\le p<\infty$. For any $t\in{\mathcal R}{ \mathcal T}(\bN,d)$ the IA($\ep$) applied to $f:=t/\|t\|_A$ provides after $m$ iterations an $m$-term trigonometric polynomial $G_m(t):=G^{i,\ep}_m(f)\|t\|_A$ with the following approximation property
$$
\|t-G_m(t)\|_p \le C(d)(\mb)^{-1/2}p^{1/2}\|t\|_A,\quad \mb:=\max(1,m), \quad \|G_m(t)\|_A =\|t\|_A, 
$$
with a  constant $C(d)$, which may depend only on $d$.
\end{Theorem}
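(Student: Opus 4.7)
The plan is to follow the first half of the proof of Theorem \ref{T2.4}: do the incremental greedy step in $L_p$ for a fixed (but now explicit) $p\in[2,\infty)$, and stop before invoking the Nikol'skii inequality to pass to $L_\infty$. The extra work compared with Theorem \ref{T2.4} is only bookkeeping of the $p$-dependent constants, together with the observation that the IA($\ep$) construction automatically enforces the $A$-norm conservation claim.

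First I would normalize: it suffices to prove the estimate for $t$ with $\|t\|_A=1$, in which case $f=t$ lies in the closed convex hull of the symmetrized real trigonometric dictionary
$\D^\pm=\{\pm\ff_l\}$, where $\ff_l$ ranges over the $n=\vartheta(\bN)$ real trigonometric monomials from $\mathcal{RT}(\bN,d)\cap\mathcal{RT}^d$. Apply Theorem \ref{T2.2} in $X=L_p$, $2\le p<\infty$. By (\ref{t6.2}) the modulus of smoothness of $L_p$ satisfies $\rho(u)\le\gamma u^q$ with $q=2$ and $\gamma=(p-1)/2\le p/2$, so the theorem gives
$$
\|t-G^{i,\ep}_m(f)\|_p \le C\gamma^{1/2}m^{-1/2} \le C(p/2)^{1/2}m^{-1/2},
$$
which rescales to the claimed bound after multiplying back by $\|t\|_A$.

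Second, I would read the bound on the $A$-norm of $G_m^{i,\ep}$ directly from the IA algorithm: step (2) writes
$G_m^{i,\ep}=(1-1/m)G_{m-1}^{i,\ep}+\ff_m^{i,\ep}/m$ with $\ff_m^{i,\ep}\in\D^\pm$, i.e.\ each $\ff_m^{i,\ep}$ is a signed real trigonometric monomial of $A$-norm one. Since $G_0^{i,\ep}=0$, a trivial induction shows $G_m^{i,\ep}$ is a convex combination of elements of $\D^\pm$, so $\|G_m^{i,\ep}\|_A\le 1$. Applying the peak functional $F_{G_m^{i,\ep}/\|G_m^{i,\ep}\|_A}$ to the trivial identity $G_m^{i,\ep}=G_m^{i,\ep}$ gives equality, and rescaling by $\|t\|_A$ yields $\|G_m(t)\|_A=\|t\|_A$ in the statement. (The conservation is baked into the scheme; the bound on $\|t-G_m(t)\|_p$ is the real content.)

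There is essentially no obstacle here; the theorem is a direct corollary of the argument used for Theorem \ref{T2.4}, and the only thing requiring attention is the explicit $p$-dependence of $\gamma$, which is what produces the $p^{1/2}$ in the final inequality. In the proof of Theorem \ref{T2.4} this factor is exactly what one then balances, via the Nikol'skii inequality $\|\cdot\|_\infty\le C(d)N^{1/p}\|\cdot\|_p$, by choosing $p\asymp\ln N$. The statement of Theorem \ref{T2.5} is precisely what one obtains by \emph{not} performing that last step and keeping $p$ free, so the proof is complete once (\ref{2.8}) has been derived with the explicit dependence $\gamma\le p/2$.
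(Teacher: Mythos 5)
Your proposal is correct and is essentially the paper's own argument: Theorem \ref{T2.5} is obtained by running the proof of Theorem \ref{T2.4} in a fixed $L_p$, $2\le p<\infty$, with $q=2$ and $\gamma\le p/2$ from (\ref{t6.2}), and simply stopping before the Nikol'skii inequality and the choice $p\asymp\ln N$. The only blemish is the peak-functional remark at the end of your second paragraph, which does not actually establish the equality $\|G_m(t)\|_A=\|t\|_A$; that identity is just the paper's bookkeeping convention $\sum_{j\in\La}|a_j|=m$ for the coefficients produced by the IA($\ep$) (stated before Theorem \ref{T2.4}), and the honest conclusion from convexity alone is $\|G_m(t)\|_A\le\|t\|_A$, which affects nothing of substance.
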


We note that the implementation of the IA($\ep$) depends on the dictionary and the ambient space $X$. For example, for $d=1$ the IA($\ep$) from Theorem \ref{T2.4} acts with respect to the real trigonometric system \newline $1,\cos 2\pi x, \sin 2\pi x,\dots, \cos N2\pi x, \sin N2\pi x$ in the space $X=L_p$ with $p\asymp \ln N$. 

The above Theorems \ref{T2.4} and \ref{T2.5} are formulated for $m$-term approximation with respect to the real trigonometric system because the general Theorem \ref{T2.2} is proved for real Banach spaces. Clearly, as a corollary of Theorems \ref{T2.4} and \ref{T2.5} we obtain the corresponding results for the complex trigonometric system $\Tr^d :=\{e^{i(\bk,\bx)}\}_{\bk\in \Z^d}$. 

\begin{Theorem}\label{T2.5C} There exist constructive greedy-type approximation methods $G^p_m(\cdot)$, which provide $m$-term polynomials with respect to $\Tr^d$ with the following properties: for $2\le p<\infty$
\be\label{2.14p}
\|f-G^p_m(f)\|_p \le C_1(d)(\mb)^{-1/2}p^{1/2}\|f\|_A,\quad \|G^p_m(f)\|_A \le C_2(d)\|f\|_A,
\ee
  and for $p=\infty$, $f\in \Tr(\bN,d)$
\be\label{2.14'}
\|f-G^\infty_m(f)\|_\infty \le C_3(d)(\mb)^{-1/2}(\ln \vartheta(\bN))^{1/2}\|f\|_A,\quad \|G^\infty_m(f)\|_A \le C_4(d)\|f\|_A.
\ee
\end{Theorem}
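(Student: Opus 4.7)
The plan is to reduce the complex case to the real case by splitting into real and imaginary parts and expanding in the real trigonometric basis, then applying Theorems \ref{T2.4} and \ref{T2.5} verbatim and absorbing the combinatorial overhead into the $d$-dependent constants.

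First, given $f(\bx)=\sum_\bk c_\bk e^{i(\bk,\bx)}$, write $f=f_1+if_2$ with $f_1=\operatorname{Re} f$ and $f_2=\operatorname{Im} f$. Each complex exponential expands via $e^{i k_j x_j}=\cos k_jx_j+i\sin k_jx_j$ as a sum of at most $2^d$ real tensor products $\ff_l=\prod_{j\in E}\cos k_jx_j\prod_{j\notin E}\sin k_jx_j$, so each $f_j$ is a real-valued trigonometric polynomial whose expansion in the real basis $\{\ff_l\}$ has $A$-norm $\|f_j\|_A^{\mathrm{real}}\le 2^d\|f\|_A$. For the $L_\infty$ case we further observe that if $f\in\Tr(\bN,d)$, then $f_1,f_2\in\mathcal{RT}(\bN,d)$, so the quantity $\vartheta(\bN)$ is the same in either formulation.

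Next, set $n:=\lfloor m/2^{d+1}\rfloor$ and apply the constructive algorithm from Theorem \ref{T2.5} (respectively Theorem \ref{T2.4}) to $f_1$ and $f_2$ with $n$ iterations each, producing real $n$-term approximants $g_1,g_2$ satisfying
$$
\|f_j-g_j\|_p\le C(d)(\bar n)^{-1/2}p^{1/2}\|f_j\|_A^{\mathrm{real}}, \qquad \|g_j\|_A^{\mathrm{real}}=\|f_j\|_A^{\mathrm{real}},
$$
and the analogous estimate with $\ln\vartheta(\bN)$ in place of $p$ for $p=\infty$. Converting each $\ff_l$ appearing in $g_1,g_2$ back into complex exponentials multiplies the number of terms by at most $2^d$, so the combined approximant
$G^p_m(f):=g_1+ig_2$
has at most $2\cdot 2^d\cdot n\le m$ nonzero complex Fourier coefficients. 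The triangle inequality gives
$$
\|f-G^p_m(f)\|_p\le \|f_1-g_1\|_p+\|f_2-g_2\|_p\le C_1(d)(\bar m)^{-1/2}p^{1/2}\|f\|_A,
$$
and likewise (\ref{2.14'}) in the $p=\infty$ case. The coefficient bound $\|G^p_m(f)\|_A\le C_2(d)\|f\|_A$ follows since each $\|g_j\|_A^{\mathrm{real}}\le 2^d\|f\|_A$ and the transition from real basis to complex basis inflates the $A$-norm by at most $2^d$.

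The potential obstacle is purely bookkeeping: one must verify that the factor $2^d$ incurred when passing between real and complex bases (both in the $A$-norm comparison and in the term count) is absorbable into $d$-dependent constants, and that Theorem \ref{T2.5} — though stated for $t\in\mathcal{RT}(\bN,d)$ — yields a bound independent of $\bN$, allowing the same algorithm to be applied to any $f$ with finite $A$-norm (by truncation and density). Neither difficulty is substantive, and no new ideas beyond the real-variable Theorems \ref{T2.4} and \ref{T2.5} are required.
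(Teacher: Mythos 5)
Your proposal is correct and follows exactly the route the paper intends: the paper offers no written proof of Theorem \ref{T2.5C}, asserting it as a ``clear'' corollary of Theorems \ref{T2.4} and \ref{T2.5}, and your real/imaginary splitting with the $2^d$ bookkeeping for the term count and $A$-norm is precisely the implicit reduction. Nothing further is needed.
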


We now apply Theorem \ref{T2.5C} for $m$-term approximation of functions with mixed smoothness. 
The following theorem was proved in \cite{T29} (see also \cite{Tmon}, Ch.4). The proofs from \cite{T29} and \cite{Tmon} are constructive.
We use the following notation 
$\beta:=\beta(q,p):= 1/q-1/p$ and $\eta:=\eta(q):= 1/q-1/2$.
\begin{Theorem}\label{T2.6} Let $1<q\le p\le 2$, $r>2\beta$. Then
$$
\sigma_m(\bW^r_q)_p \asymp m^{-r+\beta}(\log m)^{(d-1)(r-2\beta)}.
$$
\end{Theorem}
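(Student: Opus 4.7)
The plan is to prove matching upper and lower bounds. Both directions rely on the hyperbolic cross decomposition $f = \sum_\bs \delta_\bs(f)$ together with Theorem \ref{T1.1}, which converts $L_p$-norms of hyperbolic-cross functions into weighted $\ell_p$-sums over dyadic blocks, and on the simple observation that $f \in \bW^r_q$ implies $\|\delta_\bs(f)\|_q \ll 2^{-r\|\bs\|_1}$, since each $\delta_\bs$ is uniformly bounded on $L_q$ for $1<q<\infty$ and convolution with $F_r$ acts as a Fourier multiplier of size $\asymp 2^{-r\|\bs\|_1}$ on $\rho(\bs)$.

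For the upper bound, within each block $\rho(\bs)$ I would perform an $m_\bs$-term trigonometric approximation of $\delta_\bs(f)$ by thresholding the largest Fourier coefficients. In the range $1<q\le p\le 2$, the Nikolskii inequality $\|u\|_p \ll 2^{\beta\|\bs\|_1}\|u\|_q$ for $u \in \Tr(\rho(\bs))$ combined with a Hausdorff--Young-type bound on the tail of the coefficient sequence gives a per-block error of the order $m_\bs^{-\beta}\, 2^{-(r-\beta)\|\bs\|_1}$ in $L_p$. I would then allocate the total budget $m = \sum_\bs m_\bs$ by choosing $n$ with $2^n n^{d-1} \asymp m$, spending no terms on blocks with $\|\bs\|_1 \gg n$ and distributing the remaining budget across smaller blocks so that the contributions balance in the weighted sum from Theorem \ref{T1.1}. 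Summing the block errors gives the claimed rate $m^{-r+\beta}(\log m)^{(d-1)(r-2\beta)}$.

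For the matching lower bound I would use Theorem \ref{T1.2}. Given any fixed $m$-dimensional subspace $\Psi_m$ of trigonometric polynomials, one applies Theorem \ref{T1.2} to the orthogonal complement inside $\Tr(Q_n)$ for a suitable $n$ with $2^n n^{d-1} \asymp m$; this yields a polynomial $t$ orthogonal to $\Psi_m$ with $\|t\|_\infty = 1$ and $\|t\|_2$ bounded below. After rescaling $t$ by the appropriate power of $2^{-rn}$ together with a logarithmic correction, and verifying via Theorem \ref{T1.1} that the rescaled polynomial lies in $\bW^r_q$ with a uniform constant, one concludes that no $m$-term trigonometric approximant can beat the claimed rate in $L_p$.

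The main obstacle is pinning down the exponent $(d-1)(r-2\beta)$ on the logarithm exactly. A crude hyperbolic-cross truncation at $Q_n$ yields only $(\log m)^{(d-1)(r-\beta+1/p)}$-type logarithmic factors, so one has to extract an additional $(\log m)^{-(d-1)/q}$ saving by exploiting intra-block greedy approximation instead of retaining all coefficients of each block. Matching this on the lower-bound side requires a careful dimension/volume argument guaranteeing that the Theorem \ref{T1.2} witness can be normalized to lie in $\bW^r_q$ with the right constant; the delicate interplay of these two adjustments is the technical heart of the proof given in \cite{T29} and \cite{Tmon}.
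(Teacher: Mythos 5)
The paper does not actually prove Theorem~\ref{T2.6}; it is imported from \cite{T29} and \cite{Tmon}, so I can only judge your sketch on its own terms, and both halves contain a genuine gap. The fatal one is the per-block upper estimate. You claim that thresholding the Fourier coefficients of $\delta_\bs(f)$ gives an $m_\bs$-term error of order $m_\bs^{-\beta}2^{-(r-\beta)\|\bs\|_1}$, i.e.\ $\sigma_{m}(t)_p\ll (N/m)^{\beta}\|t\|_q$ for $t\in\Tr(\rho(\bs))$, $N=|\rho(\bs)|$. This is false, and no intra-block selection can repair it. Take $t$ with $\hat t(\bk)=c$ for all $\bk\in\rho(\bs)$: then $\|t\|_q\asymp cN^{1-1/q}$, while by orthogonality (for $p=2$) or by relation (\ref{1.2}) of Theorem~\ref{T1.1} applied to the single block $\bs$ (for $1<p<2$) one gets $\sigma_{N/2}(t)_p\gg cN^{1-1/p}\asymp N^{\beta}\|t\|_q$; discarding even half the block costs the full Nikolskii factor $N^{\beta}$. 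The structural reason is that the Stechkin-type tail estimate for decreasing rearrangements gains only when the tail is measured in a weaker $\ell^s$ norm than the one controlled, i.e.\ it needs $p'>q'$, whereas $1<q\le p\le 2$ forces $p'\le q'$. This is not a reparable detail: the class $\bH^r_q$ carries exactly the same blockwise information $\|\delta_\bs(f)\|_q\ll 2^{-r\|\bs\|_1}$, and by Theorem~\ref{T2.7} its rate is $m^{-r+\beta}(\log m)^{(d-1)(r-\beta+1/p)}$ --- precisely the ``crude truncation'' exponent you are trying to beat. Hence the extra $(\log m)^{(d-1)/q}$ saving for $\bW^r_q$ cannot come from intra-block greedy selection at all; it must exploit the coupling between blocks created by the single generating function $\varphi\in L_q$ with $f=\varphi\ast F_r$, which is the actual content of the argument in \cite{T29} and \cite{Tmon}.

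The lower-bound sketch also misfires. Passing to the orthogonal complement of a \emph{fixed} $m$-dimensional subspace $\Psi_m$ bounds approximation from that subspace (a Kolmogorov-width argument), whereas in $\sigma_m$ the frequency set is allowed to depend on $f$; a witness orthogonal to one $\Psi_m$ says nothing about the other choices of $m$ frequencies. Moreover, Theorem~\ref{T1.2} applies to subspaces of a full box $\Tr(\bN,d)$, not of $\Tr(Q_n)$, and it outputs only $L_\infty$/$L_2$ information, which can yield at best the $p=2$ rate $m^{-r+\eta}$ --- strictly weaker than the required $m^{-r+\beta}$ when $p<2$. The correct template appears in this very paper, in the proof of the lower bound of Theorem~\ref{T2.10}: for an arbitrary frequency set $K_m$ choose $n$ with $|\Delta Q_n|\ge 4m$, observe that at least half of the blocks $\rho(\bs)$ with $\|\bs\|_1=n$ contain at most half of $K_m\cap\rho(\bs)$, and apply (\ref{1.2}) of Theorem~\ref{T1.1} to the surviving blocks of a fixed extremal function (a suitably $L_q$-normalized sum of flat blocks over the layer $\theta_n$, which lies in a constant multiple of $\bW^r_q$). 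Your instinct about where the difficulty sits is right, but the two mechanisms you propose are the ones that provably cannot deliver the stated exponents.
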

 First, we extend Theorem \ref{T2.6} to the case $1<q\le p <\infty$.

\begin{Theorem}\label{T2.8} One has
$$
 \sigma_m(\bW^r_q)_{p}
  \asymp  \left\{\begin{array}{ll} m^{-r+\beta}(\log m)^{(d-1)(r-2\beta)}, & 1<q\le p\le 2,\quad r>2\beta,\\
 m^{-r+\eta}(\log m)^{(d-1)(r-2\eta)}, & 1<q\le 2\le p<\infty,\quad r>1/q,\\ 
 m^{-r}(\log m)^{r(d-1)}, & 2\le q\le p<\infty, \quad r>1/2.\end{array} \right.
$$
\end{Theorem}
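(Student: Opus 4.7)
Theorem \ref{T2.6} already covers the first line of Theorem \ref{T2.8}, so the work lies in the two ranges with $p\ge 2$; for each I must produce both a matching upper and a matching lower bound. My plan is to derive the upper bounds constructively from the greedy approximation tool Theorem \ref{T2.5C} together with the transference inequalities Theorems \ref{T1.1} and \ref{A}, and to obtain the lower bounds from a hyperbolic-cross fooling construction built on Theorem \ref{T1.2}.

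For the upper bound I expand $f=\sum_{\bs}\delta_{\bs}(f)$. Since $f\in\bW^r_q$, the classical estimate $\|\delta_{\bs}(f)\|_q\ll 2^{-r\|\bs\|_1}$ holds. I then bound the $A$-norm of each block: in the range $q\le 2$, Theorem \ref{A} gives $\|\delta_{\bs}(f)\|_A\ll 2^{-\|\bs\|_1(r-1/q)}\|\bs\|_1^{(d-1)(1-1/q)}$, while in the range $q\ge 2$, Parseval combined with Cauchy--Schwarz on the block $\rho(\bs)$ gives $\|\delta_{\bs}(f)\|_A\ll 2^{-\|\bs\|_1(r-1/2)}$. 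For a cutoff $n\asymp\log m$, I apply Theorem \ref{T2.5C} separately to each $\delta_{\bs}(f)$ with $\|\bs\|_1\le n$, using $m_{\bs}$ terms with $\sum_{\bs}m_{\bs}\le m$, to produce a polynomial $G_{m_{\bs}}$ with Fourier support in $\rho(\bs)$ and $\|\delta_{\bs}(f)-G_{m_{\bs}}\|_p\ll m_{\bs}^{-1/2}\|\delta_{\bs}(f)\|_A$. The global approximant $\sum_{\|\bs\|_1\le n}G_{m_{\bs}}$ is then $m$-term in $\Tr^d$, and because the pieces have disjoint dyadic supports the total $L_p$-error can be combined through Theorem \ref{T1.1} (applied with its inner parameter equal to $2$) to give
\[
\Bigl(\sum_{\|\bs\|_1\le n} m_{\bs}^{-p/2}\|\delta_{\bs}(f)\|_A^{\,p}\,2^{\|\bs\|_1(p/2-1)}\Bigr)^{1/p}+\Bigl\|\sum_{\|\bs\|_1>n}\delta_{\bs}(f)\Bigr\|_p,
\]
with the tail handled the same way using $\|\delta_{\bs}(f)\|_2\ll 2^{-\|\bs\|_1(r-\eta)}$. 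Choosing $n\asymp\log m$ and allocating $m_{\bs}$ as a function only of $k=\|\bs\|_1$, with $m_k$ geometrically increasing in $k$ so as to equalize the contributions across the different levels, the arithmetic reproduces the targets $m^{-r+\eta}(\log m)^{(d-1)(r-2\eta)}$ and $m^{-r}(\log m)^{r(d-1)}$. The construction is built out of calls to Theorem \ref{T2.5C}, so it is constructive and yields Theorem \ref{T2.8C} simultaneously.

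For the matching lower bounds I employ a classical fooling construction on a single hyperbolic layer. On each block $\rho(\bs)$ with $\|\bs\|_1=k\asymp\log m$, Theorem \ref{T1.2} produces a polynomial with unit $L_\infty$-norm and $L_2$-norm bounded below; aggregating these polynomials across the critical layer and normalizing them to lie in $\bW^r_q$ yields a function whose $L_p$-behaviour forces any $m$-term approximant to miss by the announced amount via a pigeonhole/dimension count comparing $m$ to the total number of Fourier coefficients available on the layer.

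The hardest point will be pinning down the logarithmic exponent $(d-1)(r-2\eta)$ on the upper side exactly: the Nikol'skii factor $(\log m)^{(d-1)(1-1/q)}$ per block from Theorem \ref{A} must be precisely cancelled by the Littlewood--Paley weight $2^{\|\bs\|_1(p/2-1)/p}$ from Theorem \ref{T1.1} once the allocation $\{m_k\}$ is optimized, and any suboptimal truncation or allocation loses a log. On the lower-bound side, the case $2\le q\le p$ is the most delicate because no Nikol'skii gain is available, so the construction must exploit genuinely $m$-sparse geometric features of trigonometric polynomials on the critical layer rather than a pure volume estimate.
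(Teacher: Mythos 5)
There is a genuine gap in your upper bound, and it is structural rather than a matter of bookkeeping. Your approximant $\sum_{\|\bs\|_1\le n}G_{m_\bs}$ lives entirely on the dyadic levels up to the hyperbolic-cross cutoff $n$ (with $2^nn^{d-1}\asymp m$), and you propose to simply discard the tail $\sum_{\|\bs\|_1>n}\delta_\bs(f)$. But your own tail estimate, via Theorem \ref{T1.1} with inner parameter $2$ and $\|\delta_\bs(f)\|_2\ll 2^{-\|\bs\|_1(r-\eta)}$, gives $\bigl(\sum_{l>n}l^{d-1}2^{-lp(r-\eta)}2^{l(p/2-1)}\bigr)^{1/p}\asymp 2^{-n(r-\beta)}n^{(d-1)/p}$, i.e.\ $m^{-(r-\beta)}$ up to logarithms. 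Since $\beta=1/q-1/p>1/q-1/2=\eta$ whenever $p>2$, this already exceeds the target $m^{-(r-\eta)}$: no allocation of the budget $\{m_\bs\}$ among the levels $\le n$ can compensate for a tail you never touch. (The same failure occurs in the third line: the tail gives $m^{-(r-1/2+1/p)}$ against the target $m^{-r}$.) The entire gain of $m$-term over hyperbolic-cross approximation in $L_p$, $p>2$, comes from sparsely approximating the levels \emph{above} $n$. The paper's Lemma \ref{L2.1} does exactly this: it includes $S_n(f)$ exactly ($\asymp 2^nn^{d-1}$ terms, which is cheap and makes the low levels error-free) and then spends a further $O(m)$ terms on the levels $l>n$ with $m_l=[2^{n-\mu(l-n)}l^{d-1}]$, so that the level-$l$ error is $m_l^{-1/2}\|f_l\|_A$; it is precisely the factor $m_l^{-1/2}\approx 2^{-n/2}$ from Theorem \ref{T2.5C} that upgrades the exponent from $r-1/q$ (the decay of $\|f_l\|_A$) to $r-1/q+1/2=r-\eta$. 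A plain triangle inequality over the levels $l>n$ then suffices (the terms decay geometrically), so the Littlewood--Paley recombination you invoke --- which would anyway require the greedy approximant of each block to stay frequency-supported in $\rho(\bs)$ --- is not needed.

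Your lower-bound plan also does not match what is required. For $1<q\le 2\le p$ no new construction is needed: $\sigma_m(\bW^r_q)_p\ge\sigma_m(\bW^r_q)_2$ and the case $p=2$ of Theorem \ref{T2.6} already gives $m^{-r+\eta}(\log m)^{(d-1)(r-2\eta)}$ since $\beta(q,2)=\eta$. For $2\le q\le p$ the bound $m^{-r}(\log m)^{r(d-1)}$ is the deep lower bound of Theorem \ref{baT2.3} from \cite{KTE1} (valid already for approximation in $L_p$ with $p\le q$); a pigeonhole/dimension count on a single layer built from Theorem \ref{T1.2} will not produce the full factor $(\log m)^{r(d-1)}$, as you yourself suspect. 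The paper simply quotes these two known results.
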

\begin{proof} The case $p\le 2$, which corresponds to the first line, follows from Theorem \ref{T2.6}. We note that in the case $p>2$ Theorem \ref{T2.8} is proved in \cite{T1}. However, the proof there is not constructive -- it uses a nonconstructive result from \cite{DT1}. We provide a constructive proof, which is based on greedy algorithms.  Also, this proof works under weaker conditions on $r$: $r>1/q$ instead of $r>1/q+\eta$ for $1<q\le2$. 
 The following  lemma plays the key role in the proof. 
\begin{Lemma}\label{L2.1}  Define for $f\in L_1$
$$
f_l:=\sum_{\|\bs\|_1=l}\delta_\bs(f), \quad l\in \N_0,\quad \N_0:=\N\cup \{0\}.
$$
Consider the class
$$
\bW^{a,b}_A:=\{f: \|f_l\|_A \le 2^{-al}l^{(d-1)b}\}.
$$
Then for $2\le p\le\infty$ and $0<\mu<a$ there is a constructive method $A_m(\cdot,p,\mu)$ based on greedy algorithms, which provides the bound for $f\in \bW^{a,b}_A$
\begin{equation}\label{2.13}
\|f-A_m(f,p,\mu)\|_p \ll  m^{-a-1/2} (\log m)^{(d-1)(a+b)},\quad 2\le p<\infty,      
\end{equation}
\begin{equation}\label{2.14}
\|f-A_m(f,p,\mu)\|_\infty \ll  m^{-a-1/2} (\log m)^{(d-1)(a+b)+1/2}.      
\end{equation}

\end{Lemma}
 \begin{proof} We prove the lemma for $m\asymp 2^nn^{d-1}$, $n\in \N$. Let $f\in \bW^{a,b}_A$.  
We approximate $f_l$ in $L_p$. By Theorem \ref{T2.5C} we obtain for $p\in [2,\infty)$
\begin{equation}\label{2.15}
\|f_l-G^p_{m_l}(f_l)\|_p \ll (\mb_l)^{-1/2}\|f_l\|_A \ll (\mb_l)^{-1/2}2^{-al}l^{(d-1)b}.
\end{equation}
We take $\mu\in (0,a)$ and specify
$$
m_l := [2^{n-\mu (l-n)}l^{d-1}],\quad l=n,n+1,\dots.
$$
In addition we include in the approximant 
$$
S_n(f) := \sum_{\|\bs\|_1\le n}\delta_\bs(f).
$$
Define
$$
A_m(f,p,\mu) := S_n(f)+\sum_{l>n} G_{m_l}^p(f_l).
$$
Then, we have built an $m$-term approximant of $f$ with 
$$
m\ll 2^nn^{d-1}  +\sum_{l\ge n} m_l \ll 2^nn^{d-1}.
$$
 The error   of this approximation in $L_p$ is bounded from above by
$$ 
\|f-A_m(f,p,\mu)\|_p \le \sum_{l\ge n} \|f_l-G^p_{m_l}(f_l)\|_p \ll \sum_{l\ge n} (\mb_l)^{-1/2}2^{-al}l^{(d-1)b}
$$
$$
\ll \sum_{l\ge n}2^{-1/2(n-\mu(l-n))}l^{-(d-1)/2}2^{-al}l^{(d-1)b} \ll 2^{-n(a+1/2)}n^{(d-1)(b-1/2)}.
$$
This completes the proof of lemma in the case $ 2\le p<\infty$.

 Let us discuss the case $p=\infty$. The proof repeats the proof in the above case $p<\infty$ with the following change.
Instead of using (\ref{2.14p}) for estimating an $m_l$-term approximation of $f_l$ in $L_p$ we use  (\ref{2.14'}) to estimate an  $m_l$-term approximation of $f_l$ in $L_\infty$. Then bound (\ref{2.15}) is replaced by 
\begin{equation}\label{2.16}
\|f_l-G^\infty_{m_l}(f_l)\|_\infty \ll (\mb_l)^{-1/2}(\ln 2^l)^{1/2}\|f_l\|_A \ll (\mb_l)^{-1/2}l^{1/2}2^{-al}l^{(d-1)b}.
\end{equation}
The extra factor $l^{1/2}$ in (\ref{2.16}) gives an extra factor $(\log m)^{1/2}$ in (\ref{2.14}). 
\end{proof}

We now complete the proof of Theorem \ref{2.8}. First, consider the case $1<q\le 2\le p<\infty$. It is well known (see, for instance, \cite{Tmon}, p.34, Theorem 2.1) that for $f\in \bW^r_q$ one has
\begin{equation}\label{2.9}
\|f_l\|_q \ll 2^{-lr}.
\end{equation}
Theorem \ref{A} implies
$$
\|f_l\|_A \ll 2^{-(r-1/q)l}l^{(d-1)(1-1/q)}.
$$
Therefore, it is sufficient to use Lemma \ref{L2.1} with $a=r-1/q$ and $b=1-1/q$ to obtain the upper bounds.

Second, the upper bounds in the case $2\le q\le p<\infty$ follow from the above case $1<q\le 2\le p<\infty$ with $q=2$. 
The lower bounds follow from Theorem \ref{T2.6} with $p=2$.  
The lower bounds in the case $2\le q\le p<\infty$ follow from known results for the case $1<p\le q<\infty$ in \cite{KTE1} (see Theorem \ref{baT2.3} below). 
\end{proof}
Let us discuss the case $p=\infty$. In the same way as Theorem \ref{T2.8} was derived from (\ref{2.13}) of Lemma \ref{L2.1}  the following upper bounds in case $p=\infty$ are derived from (\ref{2.14}) of Lemma \ref{L2.1}.
\begin{Theorem}\label{T2.9} We have
$$
 \sigma_m(\bW^r_q)_{\infty}
  \ll  \left\{\begin{array}{ll}   m^{-r+\eta}(\log m)^{(d-1)(r-2\eta)+1/2}, & 1<q\le 2,\quad r>1/q,\\ 
 m^{-r}(\log m)^{r(d-1)+1/2}, & 2\le q<\infty, \quad r>1/2.\end{array} \right.
$$
The upper bounds are provided by a constructive method $A_m(\cdot,\infty,\mu)$ based on greedy algorithms. 
\end{Theorem}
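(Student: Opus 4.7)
The plan is to mimic the derivation of Theorem \ref{T2.8} from (\ref{2.13}) of Lemma \ref{L2.1}, but to invoke the $L_\infty$ bound (\ref{2.14}) in its place. The whole argument reduces to embedding $\bW^r_q$ into the auxiliary class $\bW^{a,b}_A$ of Lemma \ref{L2.1} for the right values of $a,b$, and then reading off the exponents.

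For the case $1<q\le 2$, I would start from the standard dyadic block bound $\|f_l\|_q \ll 2^{-lr}$ for $f\in \bW^r_q$ recorded in (\ref{2.9}). Since $f_l\in\Tr(Q_l)$ and $Q_l$ sits inside $\Tr(\bN,d)$ with $N\asymp 2^l$, Theorem \ref{A} gives
\[
\|f_l\|_A \ll 2^{l/q}l^{(d-1)(1-1/q)}\|f_l\|_q \ll 2^{-(r-1/q)l}l^{(d-1)(1-1/q)},
\]
so $f\in \bW^{a,b}_A$ with $a=r-1/q$ and $b=1-1/q$. The hypothesis $r>1/q$ makes $a>0$, so any $\mu\in(0,a)$ is admissible. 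Substituting into (\ref{2.14}) and simplifying via $-a-1/2 = -r+(1/q-1/2) = -r+\eta$ and $a+b = r-2(1/q-1/2) = r-2\eta$ yields
\[
\|f-A_m(f,\infty,\mu)\|_\infty \ll m^{-r+\eta}(\log m)^{(d-1)(r-2\eta)+1/2},
\]
which is the first line of the theorem.

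For $2\le q<\infty$, the elementary embedding $\bW^r_q\subset \bW^r_2$ (from $\|\varphi\|_2 \le \|\varphi\|_q$ on the torus) reduces the problem to the $q=2$ specialisation of the previous paragraph, with $a=r-1/2$, $b=1/2$, and produces the bound $m^{-r}(\log m)^{(d-1)r+1/2}$ matching the second line.

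I do not anticipate any real obstacle: Lemma \ref{L2.1} has already packaged both the greedy construction and the dyadic schedule $m_l=[2^{n-\mu(l-n)}l^{d-1}]$, so what remains is only the embedding $\bW^r_q \hookrightarrow \bW^{a,b}_A$ and a short exponent calculation. Constructivity of $A_m(\cdot,\infty,\mu)$ is inherited from the constructive ingredients $S_n$ and $G^\infty_{m_l}$ supplied by Theorem \ref{T2.5C}, exactly as in the proof of Lemma \ref{L2.1}.
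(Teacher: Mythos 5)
Your proposal is correct and follows exactly the route the paper indicates: the paper's entire proof of this theorem is the remark that it is derived from (\ref{2.14}) of Lemma \ref{L2.1} in the same way that Theorem \ref{T2.8} was derived from (\ref{2.13}), i.e., via (\ref{2.9}) and Theorem \ref{A} giving the embedding into $\bW^{a,b}_A$ with $a=r-1/q$, $b=1-1/q$, and the case $2\le q<\infty$ reduced to $q=2$. Your exponent computations $-a-1/2=-r+\eta$ and $a+b=r-2\eta$ match the stated bounds, so nothing is missing.
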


 Consider the case $\sigma_m(\bW^r_{1,\alpha})_p$, which is not covered by 
Theorems \ref{T2.6} and \ref{T2.8}. The function $F_r(\bx)$ belongs to the closure in $L_p$ of  $\bW^r_{1,\alpha}$, $r>1-1/p$, and, therefore, on the one hand
$$
\sigma_m(\bW^r_{1,\alpha})_p \ge \sigma_m(F_r(\bx))_p.
$$
On the other hand, it follows from the definition of $\bW^r_{1,\alpha}$ that for any $f\in \bW^r_{1,\alpha}$
$$
\sigma_m(f)_p \le \sigma_m(F_r(\bx))_p.
$$
Thus,
\be\label{2WF}
\sigma_m(\bW^r_{1,\alpha})_p = \sigma_m(F_r(\bx))_p.
\ee
We now prove some results on $\sigma_m(F_r(\bx))_p$.

\begin{Theorem}\label{T2.10} We have
$$
 \sigma_m(F_r(\bx))_p
  \asymp  \left\{\begin{array}{ll}   m^{-r+1-1/p}(\log m)^{(d-1)(r-1+2/p)}, & 1<p\le 2,\quad r>1-1/p,\\ 
 m^{-r+1/2}(\log m)^{r(d-1)}, & 2\le p<\infty, \quad r>1.\end{array} \right.
$$
The upper bounds are provided by a constructive method $A_m(\cdot,p,\mu)$ based on greedy algorithms. 
\end{Theorem}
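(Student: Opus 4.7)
My plan is to establish the matching upper and lower bounds for $\sigma_m(F_r(\bx))_p$ by exploiting the explicit Fourier expansion of $F_r$, whose coefficients satisfy $|\hat F_r(\bk)|\asymp\prod_j\max(|k_j|,1)^{-r}$. The upper bounds split naturally into the case $p\ge 2$, handled by the constructive greedy method of Lemma \ref{L2.1}, and the case $1<p\le 2$, handled by hyperbolic-cross partial sums together with Littlewood--Paley. The matching lower bounds come from monotonicity of $L_p$-norms plus direct evaluation in $L_2$ when $p\ge 2$, and from the classical hyperbolic-cross lower-bound machinery (adapted to $q=1$ through the identification (\ref{2WF})) when $1<p<2$.

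For the upper bound with $2\le p<\infty$, I will verify that $F_r$ lies in the class $\bW^{r-1,1}_A$ from Lemma \ref{L2.1}. Since $\|\delta_\bs(F_r)\|_A=\sum_{\bk\in\rho(\bs)}|\hat F_r(\bk)|\asymp\prod_j 2^{s_j(1-r)}=2^{l(1-r)}$ when $\|\bs\|_1=l$, and there are $\asymp l^{d-1}$ such $\bs$, we obtain $\|F_{r,l}\|_A\ll 2^{-(r-1)l}l^{d-1}$. Applying Lemma \ref{L2.1} with $a=r-1$, $b=1$ (legitimate once $r>1+\mu$ for some $\mu>0$) then produces a constructive $m$-term approximant with error $\ll m^{-r+1/2}(\log m)^{(d-1)r}$, which is the desired bound in the second line of the theorem.

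For the upper bound with $1<p\le 2$ I would take $A_m(F_r,p,\mu):=S_{Q_n}(F_r)$ with $m\asymp 2^n n^{d-1}$. The univariate estimate $\|\delta_s(F_r)\|_{L_p(\T)}\asymp 2^{-s(r-1+1/p)}$ together with the tensor-product structure of $F_r$ gives $\|\delta_\bs(F_r)\|_p\asymp 2^{-l(r-1+1/p)}$ for $\|\bs\|_1=l$. The Littlewood--Paley equivalence together with the pointwise inequality $(\sum a_\bs^2)^{1/2}\le(\sum a_\bs^p)^{1/p}$ valid for $p\le 2$ then yields
$$\|F_r-S_{Q_n}(F_r)\|_p\ll\Bigl(\sum_{l>n}l^{d-1}2^{-l(pr-p+1)}\Bigr)^{1/p}\ll n^{(d-1)/p}2^{-n(r-1+1/p)},$$
and substituting $2^n\asymp m/n^{d-1}$ converts this into $m^{-r+1-1/p}(\log m)^{(d-1)(r-1+2/p)}$. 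Convergence of the dyadic sum requires exactly the hypothesis $r>1-1/p$.

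For the lower bounds, the $p\ge 2$ case is immediate: $\sigma_m(F_r)_p\ge\sigma_m(F_r)_2$ by monotonicity of $L_p$ on $\T^d$, and in $L_2$ the best $m$-term trigonometric approximation is realized by retaining the $m$ largest Fourier coefficients, i.e.\ the partial sum over $\Gamma(N)$ with $|\Gamma(N)|\asymp m$ and $N\asymp m/(\log m)^{d-1}$. A direct Abel summation using $\sum_{n\le N}d_d(n)\asymp N(\log N)^{d-1}$ then yields $\sigma_m(F_r)_2\asymp m^{-r+1/2}(\log m)^{(d-1)r}$. The genuinely delicate step is the lower bound in the range $1<p<2$: a naive Hausdorff--Young reduction to best $m$-term approximation of $\hat F_r$ in $\ell^{p'}$ only produces log exponent $(d-1)r$ rather than the sharper $(d-1)(r-1+2/p)$. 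I would attack it through (\ref{2WF}), applying the dyadic-block test-function construction from \cite{T29} and \cite{Tmon}, Ch.~4, adapted to the endpoint case $q=1$; this extension to the limiting endpoint is the main obstacle.
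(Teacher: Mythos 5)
Your upper bounds and your lower bound for $p\ge 2$ are sound and essentially coincide with the paper's argument: for $2\le p<\infty$ you verify $\|F_r^l\|_A\ll 2^{-(r-1)l}l^{d-1}$ and invoke Lemma~\ref{L2.1} with $a=r-1$, $b=1$, exactly as the paper does, and for $1<p\le 2$ your Littlewood--Paley computation reproduces the known hyperbolic-cross estimate (\ref{2.17}) that the paper simply cites. The $L_2$ lower bound obtained by retaining the $m$ largest Fourier coefficients is also correct and matches the paper's conclusion for $p\ge 2$.

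The genuine gap is the lower bound for $1<p<2$, which you explicitly leave open (``the main obstacle''). You propose adapting the test-function constructions of \cite{T29} to the endpoint $q=1$, but no such adaptation is needed; the paper's route is direct and you should adopt it. Given an arbitrary $m$-term polynomial $t$ with frequency set $K_m$, choose $n$ minimal with $|\Delta Q_n|\ge 4m$, so that $2^nn^{d-1}\asymp m$. A counting argument shows that the set $\theta_n'$ of those $\bs\in\N^d$ with $\|\bs\|_1=n$ and $|K_m\cap\rho(\bs)|\le|\rho(\bs)|/2$ contains at least half of all such $\bs$. On each block $\rho(\bs)$ with $\bs\in\theta_n'$, at least $2^{n-1}$ Fourier coefficients of $F_r$, each of size $\asymp 2^{-rn}$, are untouched by $t$, so Parseval gives $\|\delta_\bs(F_r-t)\|_2\gg 2^{n(1/2-r)}$ regardless of the coefficients of $t$. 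Then Theorem~\ref{T1.1}, relation (\ref{1.2}) with $q=2$ (valid for $1<p<2$), yields
$\|F_r-t\|_p\gg\bigl(\sum_{\bs\in\theta_n'}\|\delta_\bs(F_r-t)\|_2^p\,2^{n(p/2-1)}\bigr)^{1/p}\gg 2^{-n(r-1+1/p)}n^{(d-1)/p}$,
which is exactly the claimed lower bound after substituting $2^nn^{d-1}\asymp m$ (and for $p=2$ Parseval alone suffices). Without this step, or some substitute for it, your proof of the first line of the theorem is only one-sided.
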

\begin{proof} We begin with the case $1<p\le 2$. The following error bound for approximation by the hyperbolic cross polynomials is known (see, for instance, \cite{Tmon}, p.38)
\be\label{2.17}
E_{Q_n}(F_r)_p:=\inf_{t\in\Tr(Q_n)}\|F_r-t\|_p\ll 2^{-n(r-1+1/p)}n^{(d-1)/p}.
\ee
Taking into account that $|Q_n|\asymp 2^nn^{d-1}$ we obtain from (\ref{2.17}) the required upper bound in the case $1<p\le 2$. 
Thus, it remains to prove the matching lower bound in the case $1<p\le 2$. Denote
$$
\theta_n:=\{\bs\in \N^d:\|\bs\|_1=n\},\quad \Delta Q_n := \cup_{\bs\in\theta_n}\rho(\bs).
$$
Let $K_m:=\{\bk^j\}_{j=1}^m $ be given. Choose $n$ such that it is the minimal to satisfy
$$
|\Delta Q_n| \ge 4m.
$$
Clearly
$$
2^nn^{d-1} \asymp m.
$$
Denote
$$
\theta_n':=\{\bs\in\theta_n: |K_m\cap \rho(\bs)|\le |\rho(\bs)|/2\}.
$$
Note that for $\bs\in\N^d$ we have $|\rho(\bs)| = 2^n$. Then
$$
(|\theta_n|-|\theta_n'|)2^n/2 \le m\le |\theta_n|2^n/4,
$$
which implies
$$
|\theta_n'| \ge |\theta_n|/2.
$$
By Theorem \ref{T1.1}, (\ref{1.2}) with $q=2$, $1<p<2$, we obtain for any $t=\sum_{j=1}^m c_je^{i(\bk^j,\bx)}$
$$
\|F_r-t\|_p \gg \left(\sum_{\bs\in\theta_n'}\|\delta_\bs(F_r-t)\|_2^p 2^{n(p/2-1)}\right)^{1/p}
$$
$$
\gg \left(\sum_{\bs\in\theta_n'}2^{pn(-r+1/2)} 2^{n(p/2-1)}\right)^{1/p}\gg 2^{-n(r-1+1/p)}n^{(d-1)/p}.
$$
This gives the required lower bound for $1<p<2$. The above argument gives the lower bound in the case $p=2$ without use of Theorem \ref{T1.1} -- it is sufficient to use the Parseval identity. 

We now proceed to the case $2\le p<\infty$. Analysis here is similar to that in the proof of Theorem \ref{T2.8}. We get for
$$
F_r^l:=\sum_{\|\bs\|_1=l}\delta_\bs(F_r)
$$
$$
\|F_r^l\|_A \ll  2^{-lr}2^ll^{d-1}.
$$
The required upper bound follows from Lemma \ref{L2.1} with $a=r-1$ and $b=1$. 

The lower bound follows from the case $p=2$.
\end{proof}

In the same way as a modification of the proof of Theorem \ref{T2.8} gave Theorem \ref{T2.9}
the corresponding modification of the argument in the proof of Theorem \ref{T2.10} gives the following result.

\begin{Theorem}\label{T2.11} We have
$$
 \sigma_m(F_r(\bx))_\infty
  \ll    
 m^{-r+1/2}(\log m)^{r(d-1)+1/2},   \quad r>1. 
$$
The  bounds are provided by a constructive method $A_m(\cdot,\infty,\mu)$ based on greedy algorithms. 
\end{Theorem}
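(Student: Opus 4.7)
The plan is to reduce Theorem 2.11 directly to Lemma 2.1 in its $L_\infty$ form (the bound (\ref{2.14})), reusing the coefficient estimate already computed in the proof of Theorem 2.10.

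First I would recall from the proof of Theorem \ref{T2.10} that for $F_r^l := \sum_{\|\bs\|_1=l}\delta_\bs(F_r)$ one has
$$
\|F_r^l\|_A \ll 2^{-lr}\cdot 2^l l^{d-1} = 2^{-(r-1)l}l^{(d-1)\cdot 1}.
$$
This says precisely that $F_r \in \bW^{a,b}_A$ with $a = r-1$ and $b = 1$. The hypothesis $r > 1$ ensures $a > 0$, so we may pick any $\mu \in (0, r-1)$ and invoke Lemma \ref{L2.1}.

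Next I would apply the $L_\infty$ bound (\ref{2.14}) of Lemma \ref{L2.1} with these parameters. The method $A_m(\cdot,\infty,\mu)$ produced there is constructive: it combines the Fourier projection $S_n(F_r)$ onto $Q_n$ with greedy approximants $G_{m_l}^\infty(F_r^l)$ of the dyadic blocks for $l > n$, with $m \asymp 2^n n^{d-1}$. Substituting $a = r-1$, $b = 1$ into the right-hand side of (\ref{2.14}) gives
$$
\|F_r - A_m(F_r,\infty,\mu)\|_\infty \ll m^{-(r-1)-1/2}(\log m)^{(d-1)(r-1+1)+1/2} = m^{-r+1/2}(\log m)^{r(d-1)+1/2},
$$
which is exactly the claimed bound.

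There is essentially no obstacle here; the work has already been done. The only subtlety worth flagging is that the step functions $S_n(F_r)$ need to be viewed as part of an $m$-term trigonometric approximant (which is fine since $|Q_n| \asymp 2^n n^{d-1} \asymp m$), and that Lemma \ref{L2.1} requires $a > \mu > 0$, which is exactly where the restriction $r > 1$ enters. Constructivity is inherited automatically from Theorem \ref{T2.5C} (via the IA($\varepsilon$)) through Lemma \ref{L2.1}. No matching lower bound is asserted, so nothing further is required.
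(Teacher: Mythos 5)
Your proposal is correct and is exactly the paper's argument: the paper derives Theorem \ref{T2.11} by taking the estimate $\|F_r^l\|_A \ll 2^{-lr}2^l l^{d-1}$ from the proof of Theorem \ref{T2.10} and applying the $L_\infty$ bound (\ref{2.14}) of Lemma \ref{L2.1} with $a=r-1$, $b=1$, just as you do. The parameter substitution and the role of $r>1$ (so that $a>0$ and $\mu\in(0,a)$ can be chosen) are handled correctly.
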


We now proceed to classes $\bH^r_q$ and $\bB^r_{q,\theta}$.
Define
$$
\|f\|_{\bH^r_q}:= \sup_\bs \|\delta_\bs(f)\|_q 2^{r\|\bs\|_1},
$$
and for $1\le \theta <\infty$ define
$$
\|f\|_{\bB^r_{q,\theta}}:= \left(\sum_{\bs}\left(\|\delta_\bs(f)\|_q 2^{r\|\bs\|_1}\right)^\theta\right)^{1/\theta}.
$$
We will write $\bB^r_{q,\infty}:=\bH^r_q$. 
With a little abuse of notation, denote the corresponding unit ball
$$
\bB^r_{q,\theta}:= \{f: \|f\|_{\bB^r_{q,\theta}}\le 1\}.
$$

It will be convenient for us to use the following slight modification of classes $\bB^r_{q,\theta}$. Define
$$
\|f\|_{\bH^r_{q,\theta}}:= \sup_n \left(\sum_{\bs:\|\bs\|_1=n}\left(\|\delta_\bs(f)\|_q 2^{r\|\bs\|_1}\right)^\theta\right)^{1/\theta}
$$
and 
$$
\bH^r_{q,\theta}:= \{f: \|f\|_{\bH^r_{q,\theta}}\le 1\}.
$$

The best $m$-term approximations of classes $\bB^r_{q,\theta}$ are studied in detail by A.S. Romanyuk \cite{Rom1}. 
The following theorem was proved in \cite{T29} (see also \cite{Tmon}, Ch.4). The proofs from \cite{T29} and \cite{Tmon} are constructive.

\begin{Theorem}\label{T2.7} Let $1<q\le p\le 2$, $r>\beta$. Then
$$
\sigma_m(\bH^r_{q})_p \asymp m^{-r+\beta}(\log m)^{(d-1)(r-\beta+1/p))}.
$$
\end{Theorem}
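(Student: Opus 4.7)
My plan is to follow the two-sided template of Theorem~\ref{T2.10}, with $\bH^r_q$ replacing $\bW^r_{1,\alpha}$ and the hyperbolic-cross projection supplying the constructive upper bound. For the upper bound, given $f\in\bH^r_q$ I use the linear (hence constructive) approximant $S_{Q_n}(f):=\sum_{\|\bs\|_1\le n}\delta_\bs(f)$, with $n$ chosen so that $\dim\Tr(Q_n)\asymp 2^nn^{d-1}\asymp m$. Since $\|\delta_\bs(f)\|_q\le 2^{-r\|\bs\|_1}$ by definition of the class, relation (\ref{1.1}) of Theorem~\ref{T1.1} applied to the tail (valid for $1<q<p\le 2$; the boundary case $q=p$ reduces to the Littlewood--Paley inequality $\|g\|_p\ll\bigl(\sum_\bs\|\delta_\bs g\|_p^p\bigr)^{1/p}$ for $1<p\le 2$) gives
$$\|f-S_{Q_n}(f)\|_p\ll\left(\sum_{l>n}l^{d-1}2^{-lrp}2^{l(p/q-1)}\right)^{1/p}\ll n^{(d-1)/p}2^{-n(r-\beta)},$$
the last step using $r>\beta$ to reduce the geometric series to its leading term. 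Substituting $n\asymp\log m$ and $2^n\asymp m/(\log m)^{d-1}$ yields the claimed upper bound.

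For the matching lower bound I will exhibit a single hard function, independent of the frequency support $K_m=\{\bk^j\}_{j=1}^m$ of the would-be approximant. Let $D_\bs(\bx):=\sum_{\bk\in\rho(\bs)}e^{i(\bk,\bx)}$ denote the dyadic-shell Dirichlet kernel, satisfying $\|D_\bs\|_q\asymp 2^{\|\bs\|_1(1-1/q)}$ for $1<q<\infty$, and set
$$f(\bx):=\sum_{\bs\in\N^d}2^{-r\|\bs\|_1}2^{-\|\bs\|_1(1-1/q)}D_\bs(\bx).$$
Then $\|\delta_\bs f\|_q\asymp 2^{-r\|\bs\|_1}$, so a fixed scalar multiple of $f$ belongs to $\bH^r_q$. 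Given $K_m$, I repeat the combinatorial step from the proof of Theorem~\ref{T2.10}: pick $n$ minimal with $|\Delta Q_n|\ge 4m$ (so $2^nn^{d-1}\asymp m$) and extract $\theta_n':=\{\bs\in\theta_n:|K_m\cap\rho(\bs)|\le|\rho(\bs)|/2\}$, of cardinality $|\theta_n'|\ge|\theta_n|/2\asymp n^{d-1}$. Because $\hat f(\bk)=2^{-\|\bs\|_1(r+1-1/q)}$ is constant across $\bk\in\rho(\bs)$, any polynomial $u$ with frequencies in $K_m$ leaves untouched every coefficient of $\delta_\bs f$ on $\rho(\bs)\setminus K_m$, giving $\|\delta_\bs(f-u)\|_2\gg 2^{-n(r-\eta)}$ for every $\bs\in\theta_n'$. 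Applying (\ref{1.2}) of Theorem~\ref{T1.1} with $q=2$ in the range $1<p<2$ (and Parseval when $p=2$) then yields
$$\|f-u\|_p\gg\bigl(|\theta_n'|\cdot 2^{-np(r-\eta)}\cdot 2^{n(p/2-1)}\bigr)^{1/p}\gg n^{(d-1)/p}2^{-n(r-\beta)},$$
using the identity $\eta+1/2-1/p=1/q-1/p=\beta$; converting $n\asymp\log m$ produces the desired lower bound.

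The main bookkeeping difficulty is ensuring that the competing exponents $r$, $\eta$, $\beta$, and the dimensional factor $n^{d-1}$ collapse into the single power of $\log m$ predicted by the theorem; this forces the normalization of $D_\bs$ so that $\|\delta_\bs f\|_q$ sits exactly on the boundary of $\bH^r_q$, neither higher (which would violate class membership) nor lower (which would weaken the lower bound). A more subtle point is that $f$ must be constructed \emph{before} $K_m$ is revealed, which is why a coefficient-constant Dirichlet-type kernel is the right choice: it transfers the entire ``coverage deficit'' $|\rho(\bs)\setminus K_m|\ge 2^n/2$ into an honest $L_2$ lower bound regardless of how $u$ adjusts its coefficients on $K_m$.
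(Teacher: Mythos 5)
Your proof is correct. Note that the paper itself does not prove Theorem \ref{T2.7}; it only cites \cite{T29} and \cite{Tmon}, so you are supplying an argument where the paper defers to references. What you do is assemble the proof entirely from the paper's own toolkit: relation (\ref{1.1}) of Theorem \ref{T1.1} turns the class condition $\|\delta_\bs(f)\|_q\le 2^{-r\|\bs\|_1}$ directly into the tail bound $\|f-S_{Q_n}(f)\|_p\ll n^{(d-1)/p}2^{-n(r-\beta)}$ (with the $q=p$ endpoint handled by the one-sided Littlewood--Paley inequality), which shows that in the range $1<q\le p\le 2$ the \emph{linear} hyperbolic-cross projection already realizes the optimal $m$-term rate --- in contrast to the $\bW$ classes of Theorem \ref{T2.6}, where sparsification improves the logarithm; and for the lower bound you reuse verbatim the $\theta_n'$ extraction from the proof of Theorem \ref{T2.10} together with relation (\ref{1.2}) at $q=2$ (Parseval at $p=2$). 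The exponent bookkeeping checks out: the coverage deficit gives $\|\delta_\bs(f-u)\|_2\gg 2^{-n(r-\eta)}$ on $\gg n^{d-1}$ blocks, and $\eta+1/2-1/p=\beta$ collapses everything to $n^{(d-1)/p}2^{-n(r-\beta)}$, i.e.\ $m^{-r+\beta}(\log m)^{(d-1)(r-\beta+1/p)}$. The normalization of the shell Dirichlet kernels $D_\bs$ so that $\|\delta_\bs f\|_q\asymp 2^{-r\|\bs\|_1}$, and the fact that $\hat f$ is constant on each $\rho(\bs)$ so that any competitor supported on $K_m$ must leave at least half the mass of each block in $\theta_n'$ untouched, are exactly the right choices; the only cosmetic remark is that for the lower bound you could restrict the hard function to the single layer $\theta_n$, though summing over all $\bs\in\N^d$ (convergent since $r>\beta\ge 0$) costs nothing and yields a single hard function independent of $m$.
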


  The following analog of Theorem \ref{T2.8} for classes $\bH^r_q$ was proved in \cite{Rom1}. The proof in \cite{Rom1} in the case $p>2$ is not constructive.
\begin{Theorem}\label{T2.12} One has
$$
 \sigma_m(\bH^r_{q})_{p}
  \asymp  \left\{\begin{array}{ll} m^{-r+\beta}(\log m)^{(d-1)(r-\beta+1/p)}, & 1<q\le p\le 2,\quad r>\beta,\\
 m^{-r+\eta}(\log m)^{(d-1)(r-1/q+1)}, & 1<q\le 2\le p<\infty,\quad r>1/q,\\ 
 m^{-r}(\log m)^{(d-1)(r+1/2)}, & 2\le q\le p<\infty, \quad r>1/2.\end{array} \right.
$$
\end{Theorem}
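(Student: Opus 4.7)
\medskip
\noindent\textbf{Plan for Theorem \ref{T2.12}.} The first line coincides with Theorem \ref{T2.7}, so the new content is the regime $p\ge 2$, for which we mimic the constructive strategy used for $\bW^r_q$ in Theorem \ref{T2.8}: we bound the $A$-norm of the dyadic blocks $f_l:=\sum_{\|\bs\|_1=l}\delta_\bs(f)$, deduce that $f$ lies in a class $\bW^{a,b}_A$ from Lemma \ref{L2.1}, and read off the upper bound from that lemma. For $\bs$ with $\|\bs\|_1=l$ the polynomial $\delta_\bs(f)$ is supported on the box $\rho(\bs)$ with $|\rho(\bs)|\asymp 2^l$. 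Combining the trivial $\|t\|_A\le |\rho(\bs)|^{1/2}\|t\|_2$ with the Nikol'skii-type inequality on a box gives
$\|\delta_\bs(f)\|_A\ll 2^{l/q}\|\delta_\bs(f)\|_q$ when $1<q\le 2$, and $\|\delta_\bs(f)\|_A\ll 2^{l/2}\|\delta_\bs(f)\|_q$ when $q\ge 2$ (using $\|\cdot\|_2\le\|\cdot\|_q$ in the latter case). Since $f\in\bH^r_q$ gives $\|\delta_\bs(f)\|_q\le 2^{-rl}$ and $|\{\bs:\|\bs\|_1=l\}|\asymp l^{d-1}$, summation in $\bs$ yields
$$
\|f_l\|_A \ll \begin{cases} l^{d-1}\,2^{-(r-1/q)l}, & 1<q\le 2,\\ l^{d-1}\,2^{-(r-1/2)l}, & q\ge 2.\end{cases}
$$

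\medskip
\noindent With these block estimates in hand, I would apply Lemma \ref{L2.1}. In the case $1<q\le 2\le p<\infty$ one has $a=r-1/q$, $b=1$, so Lemma \ref{L2.1} outputs
$m^{-a-1/2}(\log m)^{(d-1)(a+b)}=m^{-r+\eta}(\log m)^{(d-1)(r-1/q+1)}$, matching the second line; the hypothesis $r>1/q$ guarantees $\mu\in(0,a)$ can be chosen as required. In the case $2\le q\le p<\infty$ one has $a=r-1/2$, $b=1$, giving $m^{-r}(\log m)^{(d-1)(r+1/2)}$, matching the third line (and $r>1/2$ is precisely the positivity condition on $a$). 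As in Theorem \ref{T2.8C}, this construction is explicit.

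\medskip
\noindent For the matching lower bounds I would exploit the monotonicity $\sigma_m(\bH^r_q)_p\ge\sigma_m(\bH^r_q)_2$ for $p\ge 2$. In the second line ($1<q\le 2$) this reduces matters to the case $p=2$ of Theorem \ref{T2.7}, which produces exactly $m^{-r+\eta}(\log m)^{(d-1)(r-\eta+1/2)}$, and the exponent on the log matches $(d-1)(r-1/q+1)$ by $\eta=1/q-1/2$. The third line ($q\ge 2$) cannot be reduced to Theorem \ref{T2.7} because the embedding goes the wrong way ($\bH^r_q\subset\bH^r_2$, not vice versa), so I would produce a concrete bad function by constructing $f=C\,2^{-rn}\sum_{\|\bs\|_1=n}V_\bs$ with $V_\bs$ a de la Vall\'ee-Poussin-type polynomial on $\rho(\bs)$ normalised so that $\|V_\bs\|_q\asymp 1$ (hence $f\in\bH^r_q$). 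For any $m$-term polynomial $t=\sum_{j=1}^m c_je^{i(\bk^j,\bx)}$, at level $n$ chosen by $2^nn^{d-1}\asymp m$ one runs the counting argument from the proof of Theorem \ref{T2.10}: at least half the boxes $\rho(\bs)$, $\|\bs\|_1=n$, are essentially untouched by $t$, and Parseval (or Theorem \ref{T1.1} with $q=2$) then delivers the required $m^{-r}(\log m)^{(d-1)(r+1/2)}$ lower bound in $L_p$, $p\ge 2$.

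\medskip
\noindent The routine part is the $A$-norm bookkeeping and the invocation of Lemma \ref{L2.1}; the main obstacle is the lower bound in the third regime, where the embedding argument available in the other cases breaks down and one has to produce a tailored extremal function and reproduce the dyadic counting step that appears in the proof of Theorem \ref{T2.10}. The only non-automatic item in the upper bound is verifying that the box-Nikol'skii constants do not introduce unwanted logs; this is why the direct box estimate is preferable to invoking Theorem \ref{A} on $\Gamma(2^l)$.
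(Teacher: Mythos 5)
Your upper-bound argument is essentially the paper's: the paper likewise bounds $\|\delta_\bs(f)\|_A \ll 2^{\|\bs\|_1/q}\|\delta_\bs(f)\|_q$, sums over the $\asymp l^{d-1}$ blocks at level $l$ to get $\|f_l\|_A \ll 2^{-(r-1/q)l}l^{d-1}$, and applies Lemma \ref{L2.1} with $a=r-1/q$, $b=1$ (handling $q\ge 2$ by reduction to $q=2$, which is equivalent to your direct computation with $a=r-1/2$). The lower bound for the second line via monotonicity in $p$ and Theorem \ref{T2.7} at $p=2$ also coincides with the paper.

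The genuine gap is in your lower bound for the third regime $2\le q\le p<\infty$. A de la Vall\'ee-Poussin (or Dirichlet) block on $\rho(\bs)$ normalised in $L_q$ has $\|V_\bs\|_2/\|V_\bs\|_q \asymp 2^{n(1/q-1/2)}$, so for $q>2$ each block satisfies $\|\delta_\bs(f)\|_2 \asymp 2^{-rn}2^{-n(1/2-1/q)}$, and the Parseval counting argument of Theorem \ref{T2.10} then yields only $n^{(d-1)/2}2^{-rn}2^{-n(1/2-1/q)}$, i.e.\ $m^{-r-(1/2-1/q)}$ up to logarithms --- strictly weaker than the claimed $m^{-r}(\log m)^{(d-1)(r+1/2)}$. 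Your construction works only at $q=2$, and, as you yourself note, the embedding $\bH^r_q\subset\bH^r_2$ goes the wrong way, so the $q=2$ case does not cover $q>2$. The repair is to replace the de la Vall\'ee-Poussin blocks by \emph{flat} polynomials on $\rho(\bs)$ (tensorised Rudin--Shapiro type, with all $2^n$ coefficients of equal modulus and $\|\cdot\|_\infty\ll 2^{n/2}\cdot$coefficient size), for which $\|\cdot\|_q\asymp\|\cdot\|_2$ uniformly in $q\le\infty$; then the same counting plus Parseval argument gives the bound for every $q$, indeed for $\bH^r_\infty$. The paper sidesteps the construction entirely by citing the known lower bound $\sigma_m(\bH^r_\infty)_p \gg m^{-r}(\log m)^{(d-1)(r+1/2)}$, valid for all $p>1$, from \cite{KTE1} (Theorem \ref{baT2.5}), together with $\bH^r_\infty\subset\bH^r_q$.
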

\begin{Proposition}\label{HCp} The upper bounds in Theorem \ref{T2.12} are provided by a constructive method $A_m(\cdot,p,\mu)$ based on greedy algorithms. 
\end{Proposition}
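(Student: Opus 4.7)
The plan is to mirror the constructive strategy already used to upgrade Theorem T2.8 into Theorems T2.8C and T2.9 via Lemma L2.1, and reduce Proposition HCp to results already at hand. The three cases of Theorem T2.12 split naturally according to the value of $p$.

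For the case $1<q\le p\le 2$, the constructive method is supplied by the proof of Theorem T2.7 in \cite{T29} and \cite{Tmon}, Chapter 4; no new argument is required.

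For the two cases with $p\ge 2$, the key step is to embed $\bH^r_q$ into a class $\bW^{a,b}_A$ on which Lemma L2.1 acts constructively. For $f\in\bH^r_q$ and any $\bs$ with $\|\bs\|_1=l$, the defining inequality $\|\delta_\bs(f)\|_q\le 2^{-rl}$, combined with Cauchy--Schwarz on $\rho(\bs)$ (whose cardinality is $\asymp 2^l$) and a standard Nikol'skii inequality on the product block $\rho(\bs)$ (when $q\le 2$), or the trivial inclusion $L_q\hookrightarrow L_2$ (when $q\ge 2$), gives
$$
\|\delta_\bs(f)\|_A \ll 2^{\,l\max(1/q,\,1/2)}\,\|\delta_\bs(f)\|_q.
$$
Summing over the $\asymp l^{d-1}$ indices $\bs$ with $\|\bs\|_1=l$ yields
$$
\|f_l\|_A \ll 2^{-(r-\max(1/q,\,1/2))l}\, l^{d-1},
$$
so $f\in \bW^{a,1}_A$ with $a=r-1/q$ when $1<q\le 2$ and $a=r-1/2$ when $2\le q<\infty$.

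Choosing $\mu\in(0,a)$ (which is possible exactly under the hypotheses $r>1/q$, respectively $r>1/2$) and applying Lemma L2.1 with $(a,b)=(a,1)$ and $p\in[2,\infty)$, the constructive method $A_m(\cdot,p,\mu)$ yields
$$
\|f-A_m(f,p,\mu)\|_p \ll m^{-a-1/2}(\log m)^{(d-1)(a+1)}.
$$
Substituting $a=r-1/q$ gives $m^{-r+\eta}(\log m)^{(d-1)(r-1/q+1)}$, matching case two of Theorem T2.12, while $a=r-1/2$ gives $m^{-r}(\log m)^{(d-1)(r+1/2)}$, matching case three. The exponents of $\log m$ in $\bH^r_q$ turn out to be larger by a factor of $(d-1)$ than in $\bW^r_q$ precisely because in the $\bH$-class there are $\asymp l^{d-1}$ blocks at each level $l$, contributing the extra $l^{d-1}=(\log m)^{d-1}$ in the embedding into $\bW^{a,1}_A$; this is what produces $b=1$ here in place of the $b=1-1/q$ used in the proof of Theorem T2.8.

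The proof has no real obstacle, since all the analytic work has been done in the earlier lemmas. The only delicate point is that the constants hidden in $\ll$ depend on $\mu$, so for $r$ barely exceeding $1/q$ or $1/2$ one must take $\mu$ correspondingly small; this dependence is already encoded in the parametrisation $A_m(\cdot,p,\mu)$ inherited from Lemma L2.1.
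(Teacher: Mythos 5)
Your proposal is correct and follows essentially the same route as the paper: reduce the case $p\le 2$ to Theorem \ref{T2.7}, and for $p\ge 2$ bound $\|\delta_\bs(f)\|_A \ll 2^{\|\bs\|_1/q}\|\delta_\bs(f)\|_q$, sum over the $\asymp l^{d-1}$ blocks at level $l$ to get $\|f_l\|_A \ll 2^{-(r-1/q)l}l^{d-1}$, and apply Lemma \ref{L2.1} with $a=r-1/q$, $b=1$ (the paper handles $2\le q$ by specializing to $q=2$, which is the same as your direct $2^{l/2}$ estimate).
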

\begin{proof}The case $p\le 2$, which corresponds to the first line, follows from Theorem \ref{T2.7}. We now consider $p\ge 2$.  We get from the definition of classes $\bH^r_q$ for $1<q<\infty$  :
$$
 f\in \bH^r_q\quad \iff\quad \|\delta_\bs(f)\|_q \le 2^{-r\|\bs\|_1}.
$$
Next,
$$
\|\delta_\bs(f)\|_A \ll 2^{\|\bs\|_1/q}\|\delta_\bs(f)\|_q.
$$
Therefore, for $f\in\bH^r_q$ we obtain
$$
\|f_l\|_A \ll 2^{-(r-1/q)l}l^{d-1}.
$$
Applying Lemma \ref{L2.1} with $a=r-1/q$, $b=1$ in the case $1<q\le2\le p<\infty$, we obtain required upper bounds. 
The upper bounds in the case $2\le q\le p<\infty$ follow from the above case with $q=2$. 
The lower bounds in the case $1<q\le2\le p<\infty$ follow from Theorem \ref{T2.7}. 

The lower bounds in the case $2\le q\le p<\infty$ follow from known results in \cite{KTE1} (see Theorem \ref{baT2.5} below). 

\end{proof}

In the case $p=\infty$ we have.

\begin{Theorem}\label{T2.13} We have
$$
 \sigma_m(\bH^r_{q})_{\infty}
  \ll  \left\{\begin{array}{ll}   m^{-r+\eta}(\log m)^{(d-1)(r-1/q+1)+1/2}, & 1<q\le 2,\quad r>1/q,\\ 
 m^{-r}(\log m)^{(r+1/2)(d-1)+1/2}, & 2\le q<\infty, \quad r>1/2.\end{array} \right.
$$
The upper bounds  are provided by a constructive method $A_m(\cdot,\infty,\mu)$ based on greedy algorithms. 
\end{Theorem}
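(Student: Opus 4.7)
The plan is to mimic the argument of Proposition \ref{HCp} exactly, replacing the $L_p$ bound (\ref{2.13}) of Lemma \ref{L2.1} by the $L_\infty$ bound (\ref{2.14}). The only new feature is the extra factor $(\log m)^{1/2}$ that (\ref{2.14}) contributes, coming from the $(\ln \vartheta(\bN))^{1/2}$ in (\ref{2.14'}) of Theorem \ref{T2.5C}.

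For the first case, $1<q\le 2$ and $r>1/q$, I start from the definition: $f\in \bH^r_q$ gives $\|\delta_\bs(f)\|_q \le 2^{-r\|\bs\|_1}$. As in the proof of Proposition \ref{HCp}, the dyadic-block bound $\|\delta_\bs(f)\|_A \ll 2^{\|\bs\|_1/q}\|\delta_\bs(f)\|_q$ (a consequence of Theorem \ref{A}) combined with $|\{\bs:\|\bs\|_1=l\}|\asymp l^{d-1}$ yields
$$
\|f_l\|_A \le \sum_{\|\bs\|_1=l} \|\delta_\bs(f)\|_A \ll 2^{-(r-1/q)l}\,l^{d-1}.
$$
Thus $f\in \bW^{a,b}_A$ with $a=r-1/q>0$ and $b=1$. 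Fixing any $\mu\in(0,a)$ and applying (\ref{2.14}) of Lemma \ref{L2.1} gives
$$
\|f-A_m(f,\infty,\mu)\|_\infty \ll m^{-(r-1/q)-1/2}(\log m)^{(d-1)(r-1/q+1)+1/2}
= m^{-r+\eta}(\log m)^{(d-1)(r-1/q+1)+1/2},
$$
since $\eta=1/q-1/2$. This is the first stated bound, produced constructively by the algorithm $A_m(\cdot,\infty,\mu)$ from Lemma \ref{L2.1}.

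For the second case, $2\le q<\infty$ and $r>1/2$, I use the trivial embedding $\bH^r_q \hookrightarrow \bH^r_2$ valid for $q\ge 2$ on the torus with normalized measure (since $\|\delta_\bs(f)\|_2 \le \|\delta_\bs(f)\|_q$). The first case applied with $q=2$ (so $\eta=0$, $1/q=1/2$, and $r>1/q$ becomes $r>1/2$) gives exactly
$$
m^{-r}(\log m)^{(d-1)(r+1/2)+1/2},
$$
as claimed, again via the same constructive method.

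The argument is essentially bookkeeping on top of Lemma \ref{L2.1} and Proposition \ref{HCp}; I do not anticipate a real obstacle. The only spot that deserves care is verifying that the exponents in $m$ and $\log m$ produced by $a=r-1/q$, $b=1$ in (\ref{2.14}) match the theorem exactly after rewriting $-1/q-1/2$ as $-\eta-1$ (modulo the $-r$), which is a direct algebraic check.
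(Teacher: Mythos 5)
Your proof is correct and follows essentially the same route as the paper: the paper derives this theorem by combining the bound $\|f_l\|_A \ll 2^{-(r-1/q)l}l^{d-1}$ established in the proof of Proposition \ref{HCp} with the $L_\infty$ estimate (\ref{2.14}) of Lemma \ref{L2.1} (with $a=r-1/q$, $b=1$), and reduces $2\le q<\infty$ to $q=2$ exactly as you do. The algebra checks out, including the conversion $-a-1/2=-r+\eta$.
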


We now proceed to classes $\bB^r_{q,\theta}$. There is the following extension  of Theorem \ref{T2.12} (see \cite{Rom1}).
\begin{Theorem}\label{T2.12B} One has
$$
 \sigma_m(\bB^r_{q,\theta})_{p} \asymp  \sigma_m(\bH^r_{q,\theta})_{p} 
 $$
 $$
  \asymp  \left\{\begin{array}{ll} m^{-r+\beta}(\log m)^{(d-1)(r-\beta+1/p-1/\theta)}, & 1<q\le p\le 2,\quad r>\beta,\\
 m^{-r+\eta}(\log m)^{(d-1)(r-1/q+1-1/\theta)}, & 1<q\le 2\le p<\infty,\quad r>1/q,\\ 
 m^{-r}(\log m)^{(d-1)(r+1/2-1/\theta)}, & 2\le q\le p<\infty, \quad r>1/2.\end{array} \right.
$$
\end{Theorem}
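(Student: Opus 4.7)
The plan is to parallel the proofs of Theorems \ref{T2.7} and \ref{T2.12}, tracking how the extra $\ell^\theta$ constraint produces the extra factor $-1/\theta$ in the logarithm. The embedding $\bB^r_{q,\theta}\subset\bH^r_{q,\theta}$ is immediate from the definitions, so $\sigma_m(\bB^r_{q,\theta})_p\le\sigma_m(\bH^r_{q,\theta})_p$. Thus it suffices to prove upper bounds for $\bH^r_{q,\theta}$ and matching lower bounds for $\bB^r_{q,\theta}$.

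For the upper bounds in the ranges $1<q\le 2\le p<\infty$ and $2\le q\le p<\infty$, I would reduce to Lemma \ref{L2.1} exactly as in Proposition \ref{HCp}. For $f\in \bH^r_{q,\theta}$, combining Theorem \ref{A} applied to $\delta_\bs(f)\in \Tr(\rho(\bs))$ with H\"older's inequality over the set $\{\bs:\|\bs\|_1=l\}$ (which has cardinality $\asymp l^{d-1}$) gives
$$
\|f_l\|_A\le\sum_{\|\bs\|_1=l}\|\delta_\bs(f)\|_A\ll 2^{l/q}\sum_{\|\bs\|_1=l}\|\delta_\bs(f)\|_q\ll 2^{-l(r-1/q)}l^{(d-1)(1-1/\theta)}.
$$
Hence $f\in \bW^{a,b}_A$ with $a=r-1/q$ and $b=1-1/\theta$, and Lemma \ref{L2.1} supplies a constructive $m$-term approximant with error $\ll m^{-r+\eta}(\log m)^{(d-1)(r-1/q+1-1/\theta)}$, which is exactly the second line. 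The third line follows by taking $q=2$ (noting $\bH^r_{q,\theta}\hookrightarrow \bH^r_{2,\theta}$ for $q\ge 2$). For the range $1<q\le p\le 2$, I would imitate the constructive hyperbolic-cross argument from \cite{T29} used to prove Theorems \ref{T2.6} and \ref{T2.7}, inserting the factor $l^{-(d-1)/\theta}$ at each dyadic level $l$ to absorb the $\theta$-summability.

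For the lower bounds I would test against functions concentrated on a single layer. For each $n$, let $\Theta_n:=\{\bs:\|\bs\|_1=n\}$ and build
$$
f_n(\bx):=c_n\,2^{-rn}|\Theta_n|^{-1/\theta}\sum_{\bs\in\Theta_n}t_\bs(\bx),
$$
where $t_\bs\in\Tr(\rho(\bs))$ is a "fat" polynomial with $\|t_\bs\|_q\asymp 1$ (obtainable from Theorem \ref{T1.2} or standard bump constructions on the block $\rho(\bs)$). With a suitable absolute constant $c_n$ this puts $f_n\in\bB^r_{q,\theta}$. For a given $m$-term candidate using atoms $K_m=\{\bk^j\}_{j=1}^m$, choose $n$ minimal with $|\cup_{\bs\in\Theta_n}\rho(\bs)|\ge 4m$; as in the proof of Theorem \ref{T2.10} at least half of the blocks $\rho(\bs)$ remain at most half-covered by $K_m$, and on those blocks the residual has $\|\delta_\bs(F_r-t)\|_2\gtrsim 2^{-rn}|\Theta_n|^{-1/\theta}\cdot 2^{n/2}$. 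Feeding this lower bound for $\|\delta_\bs\|_q$ into Theorem \ref{T1.1} (equation (\ref{1.2})) for $1<p<2$, into the Parseval identity for $p=2$, and into the standard $L_2$--$L_p$ comparison together with (\ref{1.1}) for $p>2$, yields the three matching lower bounds; the $1/\theta$ exponent enters through the normalizing factor $|\Theta_n|^{-1/\theta}\asymp n^{-(d-1)/\theta}$. Finally, in the range $2\le q\le p<\infty$ one can invoke the known results of \cite{KTE1} cited as Theorem \ref{baT2.5} below.

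The main obstacle will be the lower bound construction for the Besov class $\bB^r_{q,\theta}$: one has to balance mass across the blocks $\rho(\bs)$ with $\|\bs\|_1=n$ so that the resulting function simultaneously (i) lies in $\bB^r_{q,\theta}$ (not just $\bH^r_{q,\theta}$), (ii) has enough $L_p$-energy on each individual block to survive removal of $m$ frequencies, and (iii) produces the precise $-1/\theta$ correction in the logarithmic exponent. The first requirement forces the weight $|\Theta_n|^{-1/\theta}$, which is exactly what generates the sought power of $\log m$; the second requires the volumetric/Kashin-type lower bound from Theorem \ref{T1.2} applied block-by-block; and the argument's validity in all three ranges of $(q,p)$ must be checked separately, since the $L_q$-to-$L_p$ step differs in each case.
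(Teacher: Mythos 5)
Your proposal is correct and, for the upper bounds, is exactly the paper's argument: the reduction $\|f_l\|_A\ll 2^{-l(r-1/q)}l^{(d-1)(1-1/\theta)}$ via Theorem \ref{A} and H\"older over the $\asymp l^{d-1}$ blocks of the layer, followed by Lemma \ref{L2.1} with $a=r-1/q$, $b=1-1/\theta$, the case $2\le q$ by restriction to $q=2$, and the first line by the methods of \cite{T29}/\cite{Tmon} (the paper is no more explicit there than you are). For the lower bounds your route is slightly more self-contained than the paper's but amounts to the same idea: the paper gets the second line from the first line at $p=2$, and gets the third line by taking the extremal functions behind the bound $\sigma_m(\bH^r_\infty\cap\Tr(\Delta Q_n))_p\gg m^{-r}(\log m)^{(d-1)(r+1/2)}$ of \cite{KTE1} and rescaling them by $n^{-(d-1)/\theta}$ using $\|f\|_{\bB^r_{q,\theta}}\ll n^{(d-1)/\theta}$ for $f\in\bH^r_\infty\cap\Tr(\Delta Q_n)$ --- which is precisely your $|\Theta_n|^{-1/\theta}$ normalization. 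Your direct single-layer construction with the ``half the blocks remain half-uncovered'' counting does work in all three ranges (for $p\ge 2$ one only needs the Parseval version, so the deep part of \cite{KTE1} is not needed), but note one slip: with the normalization $\|t_\bs\|_q\asymp 1$ and flat coefficients, the surviving $L_2$ mass per block is $\asymp 2^{n(1/q-1/2)}$, not $2^{n/2}$ (you copied the computation for $F_r$, whose block coefficients are all $\asymp 2^{-rn}$, i.e.\ the $q=1$ endpoint); carrying the corrected factor $2^{n\eta}$ through still yields exactly the stated exponents in all three lines, so the final bounds are unaffected. Also, Theorem \ref{T1.2} is not the right tool for producing your $t_\bs$ (it is tailored to vanishing on point sets for cubature lower bounds); Rudin--Shapiro polynomials for $q\ge2$ and normalized Dirichlet-type kernels for $q\le2$ are what is needed, since the argument requires the Fourier coefficients of $t_\bs$ to be flat so that deleting half of them leaves proportional $\ell_2$ mass.
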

\begin{Proposition}\label{BCp} The upper bounds in Theorem \ref{T2.12B} are provided by a constructive method based on greedy algorithms. 
\end{Proposition}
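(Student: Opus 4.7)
The strategy is parallel to the proofs of Proposition \ref{HCp} and Theorem \ref{T2.10}. First note the elementary embedding $\bB^r_{q,\theta}\subset\bH^r_{q,\theta}$ (restrict the $B$-norm sum to a single level $n$), so it suffices to construct $m$-term approximants for $f\in\bH^r_{q,\theta}$ achieving the claimed rates.

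For the range $p\ge 2$, I would feed the class into Lemma \ref{L2.1}. The key estimate is a bound on the block norm $\|f_l\|_A$. By H\"older's inequality with exponents $\theta,\theta'$ together with the cardinality bound $|\{\bs\in\N_0^d:\|\bs\|_1=l\}|\asymp l^{d-1}$,
\be\label{BCp-h}
\sum_{\|\bs\|_1=l}\|\delta_\bs(f)\|_q\ll l^{(d-1)(1-1/\theta)}\Bigl(\sum_{\|\bs\|_1=l}\|\delta_\bs(f)\|_q^\theta\Bigr)^{1/\theta}\ll l^{(d-1)(1-1/\theta)}2^{-rl}.
\ee
Combining (\ref{BCp-h}) with the standard inequality $\|\delta_\bs(f)\|_A\ll 2^{\|\bs\|_1/q}\|\delta_\bs(f)\|_q$ (Cauchy--Schwarz plus the Nikol'skii inequality for $1<q\le 2$; for $q\ge 2$ use $q=2$ via $\bH^r_{q,\theta}\subset \bH^r_{2,\theta}$) yields $\|f_l\|_A\ll 2^{-(r-1/q)l}l^{(d-1)(1-1/\theta)}$, so $f\in \bW^{a,b}_A$ with $a=r-1/q$ and $b=1-1/\theta$. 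Applying Lemma \ref{L2.1} (the parameter $\mu$ absorbing a small loss in $r$) delivers the constructive greedy approximant with errors $\ll m^{-r+\eta}(\log m)^{(d-1)(r-1/q+1-1/\theta)}$ and $\ll m^{-r}(\log m)^{(d-1)(r+1/2-1/\theta)}$ respectively, matching the last two lines of Theorem \ref{T2.12B}.

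For the range $1<q\le p\le 2$, Lemma \ref{L2.1} is not available (its proof exploits the Nikol'skii passage from $L_p$ to $L_\infty$ and is tuned to $p\ge 2$), so I would extend the constructive proof of Theorem \ref{T2.7} from \cite{T29}, \cite{Tmon}. That argument distributes $m$ terms among dyadic levels $l$ and among blocks $\bs$ within each level, and is written under the $\theta=\infty$ assumption $\max_{\|\bs\|_1=l}\|\delta_\bs(f)\|_q\le 2^{-rl}$. Replacing this worst-case bound by the sharper $\ell_\theta$-sum estimate (\ref{BCp-h}) within each level, and redoing the allocation of terms across blocks accordingly, produces the extra $1/\theta$ savings in the log exponent recorded in the first line of Theorem \ref{T2.12B}. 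The main obstacle is precisely the bookkeeping here: one must verify that the block-level greedy selection combined with H\"older yields the log exponent $(d-1)(r-\beta+1/p-1/\theta)$ exactly, not something strictly larger. The matching lower bounds are already recorded in Theorem \ref{T2.12B} and are not the task of this constructive argument.
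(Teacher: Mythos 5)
Your proposal follows essentially the same route as the paper: for $p\ge 2$ you use the identical H\"older estimate $\|f_l\|_A\ll 2^{-(r-1/q)l}l^{(d-1)(1-1/\theta)}$ and feed it into Lemma \ref{L2.1} with $a=r-1/q$, $b=1-1/\theta$ (reducing $2\le q$ to $q=2$), and for $1<q\le p\le 2$ the paper likewise simply asserts that the constructive arguments behind Theorems \ref{T2.6} and \ref{T2.7} carry over to the $\ell_\theta$ block structure, which is exactly your plan. The proposal is correct and matches the paper's proof.
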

\begin{proof} In the same way as Theorems \ref{T2.6} and \ref{T2.7} were proved in \cite{Tmon} one can prove the following result for classes $\bB^r_{q,\theta}$ and $\bH^r_{q,\theta}$ in the case $1\le q\le p \le 2$, $p>1$:
$$
 \sigma_m(\bB^r_{q,\theta})_{p} \asymp  \sigma_m(\bH^r_{q,\theta})_{p} \asymp \left(\frac{(\log m)^{d-1}}{m}\right)^{r-\beta}(\log m)^{(d-1)(1/p-1/\theta)}.
 $$
This proves the statement of Proposition \ref{BCp} for the first relation in Theorem \ref{T2.12B}.

 We now consider $p\ge 2$.  We get from the definition of classes $\bH^r_{q,\theta}$ for $1<q<\infty$  :
$$
 f\in \bH^r_{q,\theta}\quad \iff\quad \left(\sum_{\|\bs\|_1=l}\|\delta_\bs(f)\|_q^\theta\right)^{1/\theta} \le 2^{-rl}.
$$
Next,
$$
\|f_l\|_A\le \sum_{\|\bs\|_1=l}\|\delta_\bs(f)\|_A \ll 2^{l/q}\sum_{\|\bs\|_1=l}\|\delta_\bs(f)\|_q
$$
$$
\ll 2^{l/q}l^{(d-1)(1-1/\theta)}\sum_{\|\bs\|_1=l}\left(\|\delta_\bs(f)\|_q^\theta\right)^{1/\theta}\ll 
2^{-l(r-1/q)}l^{(d-1)(1-1/\theta)}.
$$
Therefore, for $f\in\bH^r_{q,\theta}$ we obtain
$$
\|f_l\|_A \ll 2^{-(r-1/q)l}l^{(d-1)(1-1/\theta)}.
$$
Applying Lemma \ref{L2.1} with $a=r-1/q$ and $b=1-1/\theta$ in the case $1<q\le2\le p<\infty$, we obtain required upper bounds. 
The upper bounds in the case $2\le q\le p<\infty$ follow from the above case with $q=2$. 
The lower bounds in the case $1<q\le2\le p<\infty$ follow from the case $1<q\le 2$, $p=2$.

It was proved in \cite{KTE1} that 
\be\label{*} 
\sigma_m(\bH^r_\infty\cap\Tr(\Delta Q_n))_p \gg m^{-r} (\log m)^{(d-1)(r+1/2)}
\ee
with some $n$ such that $m\asymp 2^nn^{d-1}$. It is easy to see that for any $f \in  \bH^r_\infty\cap\Tr(\Delta Q_n))_p$ we have
\be\label{**}
\|f\|_{\bB^r_{q,\theta}} \ll n^{(d-1)/\theta}.
\ee
Relations (\ref{*}) and (\ref{**}) imply the lower bound in the case $2\le q\le p<\infty$.

\end{proof}

In the case $p=\infty$ we have.

\begin{Theorem}\label{T2.13} We have
$$
 \sigma_m(\bH^r_{q,\theta})_{\infty}
  \ll  \left\{\begin{array}{ll}   m^{-r+\eta}(\log m)^{(d-1)(r-1/q+1-1/\theta)+1/2}, & 1<q\le 2,\quad r>1/q,\\ 
 m^{-r}(\log m)^{(r+1/2-1/\theta)(d-1)+1/2}, & 2\le q<\infty, \quad r>1/2.\end{array} \right.
$$
The upper bounds in Theorem \ref{T2.13} are provided by a constructive method $A_m(\cdot,\infty,\mu)$ based on greedy algorithms. 
\end{Theorem}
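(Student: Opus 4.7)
The plan is to deduce Theorem 2.13 from Lemma \ref{L2.1} in exactly the same way that Theorem \ref{T2.9} was deduced from Lemma \ref{L2.1} (in the passage following Theorem \ref{T2.8}), and analogously to how the $p=\infty$ statement Theorem \ref{T2.11} was obtained from the interior argument in Proposition \ref{BCp}. The only change compared with the constructive proof of Theorem \ref{T2.12B} is that the $L_\infty$ branch (\ref{2.14}) of Lemma \ref{L2.1} is used in place of the $L_p$ branch (\ref{2.13}); this exchange is responsible for the extra factor $(\log m)^{1/2}$ in the bounds.

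First I would recall the block $A$-norm estimate already established in the proof of Proposition \ref{BCp}: for $1<q<\infty$ and $f\in \bH^r_{q,\theta}$,
\be\label{hblock}
\|f_l\|_A \ll 2^{-(r-1/q)l} l^{(d-1)(1-1/\theta)},
\ee
so (up to an absolute constant) $f$ lies in the class $\bW^{a,b}_A$ of Lemma \ref{L2.1} with parameters $a=r-1/q$ and $b=1-1/\theta$. Next I would apply (\ref{2.14}) of Lemma \ref{L2.1}, which provides a constructive method $A_m(\cdot,\infty,\mu)$ with
$$
\|f-A_m(f,\infty,\mu)\|_\infty \ll m^{-a-1/2}(\log m)^{(d-1)(a+b)+1/2},
$$
valid whenever $0<\mu<a$. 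In the range $1<q\le 2$ with $r>1/q$, substituting $a=r-1/q$ and $b=1-1/\theta$ gives
$$
m^{-r+\eta}(\log m)^{(d-1)(r-1/q+1-1/\theta)+1/2},
$$
which is exactly the first line of Theorem 2.13; the hypothesis $r>1/q$ is what makes $a>0$ so that a valid $\mu$ exists.

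For the second range $2\le q<\infty$, I would use the trivial embedding $\bH^r_{q,\theta}\hookrightarrow \bH^r_{2,\theta}$ (which follows from $\|\delta_\bs(f)\|_2 \le \|\delta_\bs(f)\|_q$ on $\T^d$) to reduce to the previous case with $q=2$, giving $a=r-1/2$, $b=1-1/\theta$, and the bound
$$
m^{-r}(\log m)^{(d-1)(r+1/2-1/\theta)+1/2},
$$
which matches the second line; the condition $r>1/2$ again ensures $a>0$.

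Since both ingredients — the $A$-norm block bound (\ref{hblock}) and the $p=\infty$ branch of Lemma \ref{L2.1} — are already in hand, I expect no serious obstacle: the proof is essentially a reassembly of pieces that were built precisely for this purpose, and the constructive nature of the method is inherited from $A_m(\cdot,\infty,\mu)$. The only small points to verify are the embedding $\bH^r_{q,\theta}\hookrightarrow \bH^r_{2,\theta}$ for $q\ge 2$ and the book-keeping of the exponent of the logarithm, which is a direct substitution of $a$ and $b$ into $(d-1)(a+b)+1/2$.
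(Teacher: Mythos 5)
Your proposal is correct and matches the paper's intended argument exactly: the paper leaves this theorem without an explicit proof precisely because it follows from the block estimate $\|f_l\|_A \ll 2^{-(r-1/q)l}l^{(d-1)(1-1/\theta)}$ already derived in the proof of Proposition \ref{BCp}, combined with the $L_\infty$ branch (\ref{2.14}) of Lemma \ref{L2.1} with $a=r-1/q$, $b=1-1/\theta$ (and the reduction to $q=2$ for $q\ge 2$), just as Theorem \ref{T2.9} was derived from Theorem \ref{T2.8}. Your bookkeeping of the exponents and of the condition $a>0$ is accurate.
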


We formulate some known results in the case $1<p\le q\le\infty$.
\begin{Theorem}\label{baT2.3} Let $1<p\le q<\infty$, $r>0$. Then
$$
\sigma_m(\bW^r_q)_p \asymp m^{-r}(\log m)^{(d-1)r}.
$$
\end{Theorem}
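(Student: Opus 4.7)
The upper bound is via linear hyperbolic-cross approximation. For $f \in \bW^r_q$ the classical estimate $\|f - S_{Q_n}(f)\|_q \ll 2^{-rn}$ (see \cite{Tmon}, Ch.2) combined with the trivial embedding $L_q(\T^d) \hookrightarrow L_p(\T^d)$ for $p \le q$ gives $\|f - S_{Q_n}(f)\|_p \ll 2^{-rn}$. Since $|Q_n| \asymp 2^n n^{d-1}$, choosing $n$ so that $|Q_n| \asymp m$ yields $\sigma_m(\bW^r_q)_p \ll m^{-r}(\log m)^{(d-1) r}$ uniformly over $\bW^r_q$; this is in fact a linear (hence constructive) method.

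For the lower bound I would exhibit a single fooling function $f_0 \in \bW^r_q$ with $\sigma_m(f_0)_p$ matching the target rate. Pick the minimal $n$ with $2^n n^{d-1} \ge 4m$ and set
$$
f_0 = c_n \sum_{\bs \in \theta_n} R_\bs, \qquad R_\bs(\bx) = \sum_{\bk \in \rho(\bs)} \varepsilon_\bk e^{i(\bk,\bx)},
$$
where the unimodular signs $\varepsilon_\bk \in \{\pm 1\}$ are chosen Rudin--Shapiro so that $\|R_\bs\|_\infty \ll 2^{\|\bs\|_1/2}$. Littlewood--Paley applied to $f_0^{(r)}$ gives $\|f_0^{(r)}\|_q \asymp c_n \, 2^{(r+1/2)n} n^{(d-1)/2}$, so taking $c_n \asymp 2^{-(r+1/2)n} n^{-(d-1)/2}$ puts $f_0$ in $\bW^r_q$ simultaneously for every $1 < q < \infty$. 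For any $K_m \subset \Z^d$ with $|K_m|=m$ and any $t \in \Tr(K_m)$, the counting argument from the proof of Theorem \ref{T2.10} produces $\theta_n' \subset \theta_n$ with $|\theta_n'| \gg n^{d-1}$ and $|\rho(\bs) \setminus K_m| \ge 2^{n-1}$ for $\bs \in \theta_n'$. On these indices $\hat f_0(\bk) = c_n \varepsilon_\bk$ is undisturbed by $t$, so Parseval gives $\|f_0 - t\|_2^2 \gg c_n^2 \cdot n^{d-1} \cdot 2^n$, whence $\|f_0 - t\|_2 \gg 2^{-rn} \asymp m^{-r}(\log m)^{(d-1) r}$. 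By monotonicity $\|\cdot\|_p \ge \|\cdot\|_2$ on the torus, this is the claimed lower bound for all $p \ge 2$.

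The main obstacle is the range $1 < p < 2$, where the direct Parseval argument loses a factor $(n^{d-1}/2^n)^{1/p - 1/2}$. To recover the sharp rate one applies Theorem \ref{T1.1}, inequality (\ref{1.2}), with $q$ replaced by $2$ and the block-$L_2$ lower bounds as $\varepsilon_\bs$, taking care that the summation over $\theta_n'$ does not generate spurious log factors; equivalently, one may dualize the $L_p$--$L_{p'}$ problem and reduce to $p' > 2$. An alternative route replaces the flat $f_0$ by a spikier fooling function -- a tensor product of univariate de la Vall\'ee-Poussin kernels scaled along each block $\rho(\bs) \in \theta_n'$ -- again combined with the counting argument behind Theorem \ref{T2.10}. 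This is the delicate part of the argument from \cite{KTE1} that needs to be invoked. Combining the two $p$-regimes and the upper bound gives the asymptotic $\sigma_m(\bW^r_q)_p \asymp m^{-r}(\log m)^{(d-1)r}$.
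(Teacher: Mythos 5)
Your upper bound is exactly the paper's: approximation by the hyperbolic cross projection $S_{Q_n}$ with $|Q_n|\asymp 2^nn^{d-1}\asymp m$, the bound $E_{Q_n}(\bW^r_q)_q\ll 2^{-rn}$ from \cite{Tmon}, and the embedding $L_q\hookrightarrow L_p$ for $p\le q$. For the lower bound the paper gives no argument at all and simply cites \cite{KTE1}; your self-contained treatment of the range $2\le p\le q$ (Rudin--Shapiro blocks, the counting argument producing $\theta_n'$ with $|\theta_n'|\gg n^{d-1}$, Parseval on the untouched half of each block) is correct and is a genuine addition, modulo the routine verification that the flatness $\|\delta_\bs(f_0^{(r)})\|_\infty\ll c_n2^{rn}2^{n/2}$ survives the multiplier $\prod_j|k_j|^r$ (Abel summation on each dyadic block).

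The gap is the range $1<p<2$, and your first proposed repair does not work as stated. Applying Theorem \ref{T1.1}, inequality (\ref{1.2}), with $q=2$ and $\varepsilon_\bs\asymp c_n2^{n/2}$ for $\bs\in\theta_n'$ gives
\begin{equation*}
\|f_0-t\|_p\gg\Bigl(\sum_{\bs\in\theta_n'}\bigl(c_n2^{n/2}\bigr)^p2^{n(p/2-1)}\Bigr)^{1/p}\asymp c_n\,2^{n(1-1/p)}n^{(d-1)/p}\asymp 2^{-rn}\bigl(2^{-n}n^{d-1}\bigr)^{1/p-1/2},
\end{equation*}
which falls short of $2^{-rn}$ by a \emph{power of $m$}, not by spurious logarithms: the flat Rudin--Shapiro function, normalized to lie in $\bW^r_q$, is simply too spread out to be a good fooling function in $L_p$ for $p<2$. (This is consistent with the paper's own proof of Theorem \ref{T2.10}, where the same mechanism applied to $F_r$ produces the genuinely smaller exponent $-r+1-1/p$ for $p\le 2$.) Your alternative suggestions (dualizing, or concentrated Fej\'er/de la Vall\'ee-Poussin-type blocks) point in the right direction but are not carried out, and you end by invoking \cite{KTE1} for precisely this case. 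Since the paper also delegates the entire lower bound to \cite{KTE1}, your write-up is no less complete than the paper's --- but the specific claim that Theorem \ref{T1.1} ``recovers the sharp rate'' for $1<p<2$ with this fooling function should be deleted or replaced by an honest citation, because that step would fail.
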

The upper bound in Theorem \ref{baT2.3} follows from error bounds for approximation by the hyperbolic cross polynomials (see \cite{Tmon}, Ch.2, \S2)
$$
E_{Q_n}(\bW^r_q,L_q) \ll 2^{-rn},\quad 1<q<\infty.
$$
The lower bound in Theorem \ref{baT2.3} was proved in \cite{KTE1}.

The following result for $\bH^r_q$ classes is known.
\begin{Theorem}\label{baT2.5} Let $p\le q$, $2\le q\le \infty$, $1<p<\infty$, $r>0$. Then
$$
\sigma_m(\bH^r_{q})_p \asymp m^{-r}(\log m)^{(d-1)(r+1/2)}.
$$
\end{Theorem}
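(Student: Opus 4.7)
The plan is to reduce the upper bound to the diagonal case $p=q$ of Theorem \ref{T2.12}, and to derive the lower bound from the construction of \cite{KTE1} on the band $\Delta Q_n$ that was already invoked in the proof of Proposition \ref{BCp} via the bound (\ref{*}).

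For the upper bound, I would first observe that $\|g\|_p\le\|g\|_q$ for $p\le q$ on $\T^d$ with normalized measure, hence
$$
\sigma_m(\bH^r_q)_p\le \sigma_m(\bH^r_q)_q
$$
whenever $q<\infty$. The diagonal case $p=q\ge 2$ is exactly the third line of Theorem \ref{T2.12}, giving $\sigma_m(\bH^r_q)_q\ll m^{-r}(\log m)^{(d-1)(r+1/2)}$. For $q=\infty$, the trivial embedding $L_\infty\hookrightarrow L_{q'}$ applied block-wise yields $\|\delta_\bs(f)\|_{q'}\le\|\delta_\bs(f)\|_\infty$, so $\bH^r_\infty\subset\bH^r_{q'}$ with constant one for every finite $q'$; setting $q':=\max(p,2)$ brings us back to the previous case.

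For the lower bound, the same block-wise embedding gives $\bH^r_\infty\subset\bH^r_q$ for every $q\in[2,\infty]$, hence $\sigma_m(\bH^r_q)_p\ge\sigma_m(\bH^r_\infty)_p$, and it suffices to treat $q=\infty$. This is the content of (\ref{*}). Concretely, I would choose $n$ with $m\asymp 2^n n^{d-1}$ and $|\Delta Q_n|\ge 4m$. For any $m$-element frequency set $K_m\subset\Z^d$, the pigeonhole argument already used in the proof of Theorem \ref{T2.10} shows that at least half of the blocks $\rho(\bs)$ with $\|\bs\|_1=n$ satisfy $|K_m\cap\rho(\bs)|\le |\rho(\bs)|/2$. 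For each such $\bs$, Theorem \ref{T1.2} produces $t_\bs\in \Tr(\rho(\bs)\setminus K_m)$ with $\|t_\bs\|_\infty=2^{-rn}$ and $\|t_\bs\|_2\gg 2^{-rn}$; put $f:=c\sum_\bs t_\bs\in\bH^r_\infty$. For any $g\in\Tr(K_m)$ the residual satisfies $\|\delta_\bs(f-g)\|_2\gg 2^{-rn}$ on $\asymp n^{d-1}$ blocks, and the Littlewood--Paley characterization of $L_p$ (valid for $1<p<\infty$) converts this block-wise $L_2$ bound into $\|f-g\|_p\gg n^{(d-1)/2}2^{-rn}\asymp m^{-r}(\log m)^{(d-1)(r+1/2)}$.

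The main obstacle is the lower bound, specifically the extra factor $(\log m)^{(d-1)/2}$ beyond what a naive hyperbolic-cross argument produces. Getting this factor requires Theorem \ref{T1.2} (the Kashin-type volumetric estimate for trigonometric subspaces) to upgrade an $L_\infty$-normalization into a comparable $L_2$-lower bound on each block. A secondary subtlety is the range $p<2$, where the Parseval identity no longer suffices to pass from block-wise $L_2$ estimates to a global $L_p$ lower bound; this step is handled by the $L_p$ Littlewood--Paley theory, which is where the restriction $1<p<\infty$ enters.
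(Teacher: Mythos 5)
Your upper bound is fine in spirit (and the paper gets it even more cheaply: linear approximation by the hyperbolic cross polynomials, $E_{Q_n}(\bH^r_q)_q\asymp n^{(d-1)/2}2^{-rn}$ with $|Q_n|\asymp 2^nn^{d-1}$, already achieves the stated order for all $r>0$, whereas your reduction to the third line of Theorem \ref{T2.12} only covers $r>1/2$). The real problem is the lower bound, where your argument has two genuine gaps. First, a quantifier error: you build the test function $f$ \emph{after} the frequency set $K_m$ is given, since the blocks $t_\bs$ are taken in $\Tr(\rho(\bs)\setminus K_m)$ via Theorem \ref{T1.2}. This proves a lower bound for $\inf_{K_m}\sup_{f}\operatorname{dist}(f,\Tr(K_m))_p$, i.e.\ for approximation from the best fixed $m$-dimensional trigonometric subspace, not for $\sigma_m(\bH^r_\infty)_p=\sup_f\inf_{K_m}\operatorname{dist}(f,\Tr(K_m))_p$; the inequality between these two quantities goes the wrong way. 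Contrast this with the paper's lower bound for Theorem \ref{T2.10}, where the function $F_r$ is fixed in advance and survives every choice of $K_m$ because all its Fourier coefficients on a block are comparable. For $\bH^r_\infty$ one cannot fix such a function so naively, and this is exactly why the bound $\sigma_m(\bH^r_\infty)_p\gg m^{-r}(\log m)^{(d-1)(r+1/2)}$ is a nontrivial theorem of \cite{KTE1}, proved there by a volume/entropy argument on the set $\bH^r_\infty\cap\Tr(\Delta Q_n)$ rather than by exhibiting one bad function per $K_m$. The paper simply cites this result; it does not reprove it.

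Second, even if the quantifier issue were repaired (say by using Rudin--Shapiro-type flat polynomials on each block, so that deleting any half of a block's spectrum still leaves a constant fraction of its $L_2$ mass), the passage from the blockwise bounds $\|\delta_\bs(f-g)\|_2\gg 2^{-rn}$ on $\asymp n^{d-1}$ blocks to $\|f-g\|_p\gg 2^{-rn}n^{(d-1)/2}$ fails for $1<p<2$. Littlewood--Paley does not do this: the relevant quantitative form is (\ref{1.2}) of Theorem \ref{T1.1} with $q=2$, which yields only
$$
\|f-g\|_p\gg\Bigl(\sum_{\bs\in\theta_n'}\|\delta_\bs(f-g)\|_2^p\,2^{n(p/2-1)}\Bigr)^{1/p}\asymp 2^{-rn}n^{(d-1)/p}2^{n(1/2-1/p)},
$$
which is exponentially smaller than the target when $p<2$, since $g$ may concentrate the square function on a small set. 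Your argument is sound only for $p\ge2$ (where Parseval suffices), and the whole difficulty of Theorem \ref{baT2.5} lies in the range $1<p<2$. If you want a self-contained proof you must import the entropy/volume machinery of \cite{KTE1} (in the spirit of Section 3 of this paper); otherwise the correct move is the one the paper makes: cite \cite{KTE1} for the lower bound with $q=\infty$ and use the embedding $\bH^r_\infty\subset\bH^r_q$, which you do state correctly.
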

The lower bound for all $p>1$
$$
\sigma_m(\bH^r_\infty)_p \gg m^{-r}(\log m)^{(d-1)(r+1/2)}
$$
was obtained in \cite{KTE1}. The matching upper bounds follow from approximation by the hyperbolic cross polynomials (see \cite{Tmon}, Ch.2, Theorem 2.2)
$$
E_{Q_n}(\bH^r_q)_q:=\sup_{f\in\bH^r_q}E_{Q_n}(f)_q \asymp n^{(d-1)/2}2^{-rn},\quad 2\le q<\infty.
$$
The following result for $\bB$ classes was proved in \cite{Rom1}.

\begin{Theorem}\label{baT2.5B} Let $1<p\le q<\infty$, $2\le q< \infty$, $1<p<\infty$, $r>0$. Then
$$
\sigma_m(\bB^r_{q,\theta})_p \asymp m^{-r}(\log m)^{(d-1)(r+1/2-1/\theta)_+}.
$$
\end{Theorem}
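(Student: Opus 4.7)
The plan is to prove the upper and lower bounds separately. Since $p \le q$, monotonicity of $L_r$-norms on $\T^d$ gives $\sigma_m(\bB^r_{q,\theta})_p \le \sigma_m(\bB^r_{q,\theta})_q$, so it suffices to estimate the latter. I would approximate by hyperbolic-cross polynomials $S_{Q_n}(f) := \sum_{\|\bs\|_1 \le n}\delta_\bs(f)$ with $m \asymp |Q_n| \asymp 2^n n^{d-1}$. Because $q \ge 2$, the Littlewood--Paley inequality gives
$$
\|f - S_{Q_n}(f)\|_q^2 \ll \sum_{\|\bs\|_1 > n}\|\delta_\bs(f)\|_q^2 .
$$
Setting $b_\bs := \|\delta_\bs(f)\|_q 2^{r\|\bs\|_1}$, we have $\sum_\bs b_\bs^\theta \le 1$ for $f \in \bB^r_{q,\theta}$. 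At each level $l$, H\"older's inequality (when $\theta \ge 2$) or the embedding $\ell^\theta \subset \ell^2$ (when $\theta \le 2$) yields
$$
\sum_{\|\bs\|_1=l} b_\bs^2 \ll l^{(d-1)(1-2/\theta)_+} B_l^{\min(1,2/\theta)},\qquad B_l := \sum_{\|\bs\|_1=l} b_\bs^\theta ,
$$
and summing $2^{-2rl} B_l^{\min(1,2/\theta)}$ in $l > n$ gives $\|f - S_{Q_n}(f)\|_q \ll n^{(d-1)(1/2-1/\theta)_+} 2^{-rn}$. Translating via $m \asymp 2^n n^{d-1}$ produces the target rate $m^{-r}(\log m)^{(d-1)(r+1/2-1/\theta)}$ in the range $\theta \ge 2$.

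For the lower bound I follow the argument used in the proof of Proposition \ref{BCp}. The bound (\ref{*}) from \cite{KTE1} reads $\sigma_m(\bH^r_\infty\cap \Tr(\Delta Q_n))_p \gg m^{-r}(\log m)^{(d-1)(r+1/2)}$ for some $n$ with $m \asymp 2^n n^{d-1}$, which is available in the whole range $1<p<\infty$. The embedding (\ref{**}), namely $f \in \bH^r_\infty\cap \Tr(\Delta Q_n) \Rightarrow \|f\|_{\bB^r_{q,\theta}} \ll n^{(d-1)/\theta}$, allows rescaling the extremal polynomials into $\bB^r_{q,\theta}$, yielding
$$
\sigma_m(\bB^r_{q,\theta})_p \gg n^{-(d-1)/\theta} m^{-r}(\log m)^{(d-1)(r+1/2)} \asymp m^{-r}(\log m)^{(d-1)(r+1/2-1/\theta)}.
$$
When $r+1/2-1/\theta > 0$ this is the claimed bound; when $r+1/2-1/\theta \le 0$ the claim reduces to $\sigma_m \gg m^{-r}$, which one verifies trivially by testing on a single fixed element of $\bB^r_{q,\theta}$ with pure-$m^{-r}$ best $m$-term approximation (any suitably scaled smooth univariate factor times a generic tensor component will do).

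The main obstacle is the upper bound when $\theta < 2$: the plain hyperbolic-cross truncation $S_{Q_n}$ only delivers $m^{-r}(\log m)^{r(d-1)}$, whereas the target exponent $(d-1)(r+1/2-1/\theta)_+$ is strictly smaller. The $\ell^\theta$-constraint on block norms for $\theta<2$ encodes stronger sparsity than plain truncation can exploit, and capturing it requires a block-adaptive construction. A natural plan is to threshold: retain only blocks $\bs$ with $b_\bs \ge \epsilon$ (at most $\epsilon^{-\theta}$ of them), apply the constructive greedy scheme of Lemma \ref{L2.1} inside each retained block with an allocation $m_\bs$ balanced against the $\ell^\theta$-budget, and optimize $\epsilon$ and $\{m_\bs\}$ so that the total Fourier budget is $\asymp m$ while the residual $L_q$-error matches $m^{-r}(\log m)^{(d-1)(r+1/2-1/\theta)_+}$. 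Turning this heuristic into a clean constructive proof for all admissible $r$, including the small-smoothness range $r \le 1/\theta - 1/2$ where no logarithmic slack is permitted, is the delicate step.
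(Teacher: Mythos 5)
First, a point of comparison: the paper does not prove this theorem at all --- it is quoted from Romanyuk \cite{Rom1} as a known result --- so there is no internal argument to measure yours against. That said, your lower bound via (\ref{*}) and (\ref{**}) coincides with the way the paper handles the analogous lower bound inside the proof of Proposition \ref{BCp}, and your upper bound for $\theta\ge 2$ (hyperbolic cross truncation $S_{Q_n}$ plus H\"older over the $\asymp l^{d-1}$ blocks at each level, followed by $\sigma_m(\cdot)_p\le\sigma_m(\cdot)_q$) is correct; in that range the linear truncation already achieves the nonlinear rate and the positive part is vacuous.

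Two genuine gaps remain, and they are exactly the hard parts of the theorem. (a) \emph{The upper bound for $\theta<2$.} You correctly diagnose that truncation only yields $(\log m)^{r(d-1)}$ and that a block-adaptive scheme is needed, but you leave this as a heuristic. For $r>1/2$ it can actually be closed with material already in the paper: $\bB^r_{q,\theta}\subset\bH^r_{q,\theta}$, and the third line of Theorem \ref{T2.12B} with $p=q$ (Proposition \ref{BCp}, i.e.\ Lemma \ref{L2.1} with $a=r-1/2$, $b=1-1/\theta$) gives $m^{-r}(\log m)^{(d-1)(r+1/2-1/\theta)}$. But for $0<r\le 1/2$ --- admissible here, and precisely where the positive part $(\cdot)_+$ bites --- Lemma \ref{L2.1} is unavailable ($a\le 0$), and the paper explicitly states that its greedy technique does not handle small smoothness; this is why it defers to \cite{Rom1}. (b) \emph{The lower bound when $r+1/2-1/\theta\le 0$.} The claim that $\sigma_m\gg m^{-r}$ is verified \emph{trivially} by testing a single fixed element is not tenable: lower bounds for best $m$-term approximation with respect to the full trigonometric system in $L_p$ with $p$ near $1$ are delicate because the competitor may use arbitrary frequencies. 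The natural single-function candidates fail: for instance, a single-block test function $t=2^{-rn}u/\|u\|_q$ with $u=\sum_{\bk\in\rho(\bs)}e^{i(\bk,\bx)}$, $\|\bs\|_1=n$, after removal of half its spectrum by an $m$-term competitor ($m\asymp 2^n$) retains only $\|t-a\|_2\gg 2^{-rn}2^{-n(1/2-1/q)}\ll m^{-r}$ for $q>2$, and passing to $L_p$ with $p<2$ loses more. This regime again requires the machinery of \cite{KTE1} or \cite{Rom1}. In summary, the proposal is sound for $\theta\ge 2$ and for the lower bound when $r+1/2-1/\theta>0$, but incomplete exactly where the theorem is nontrivial.
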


\section{Application of the entropy numbers}

Let $X$ be a Banach space and let $B_X$ denote the unit ball of $X$ with the center at $0$. Denote by $B_X(y,r)$ a ball with center $y$ and radius $r$: $\{x\in X:\|x-y\|\le r\}$. For a compact set $A$ and a positive number $\e$ we define the covering number $N_\e(A)$
 as follows
$$
N_\e(A) := N_\e(A,X)  
:=\min \{n : \exists y^1,\dots,y^n :A\subseteq \cup_{j=1}^n B_X(y^j,\e)\}.
$$
  
For a compact $A$ we define an $\e$-distinguishable set $\{x^1,\dots,x^m\} \subseteq A$ as a set with the property
\begin{equation}\label{31.4}
\|x^i-x^j\| >\e, \quad \text{for all}\quad i,j: i\neq j. 
\end{equation}
Denote by $M_\e(A):=M_\e(A,X)$ the maximal cardinality of $\e$-distinguishable sets of a compact $A$. The following simple theorem is well known.
\begin{Theorem}\label{Theorem 33.1} For any compact set $A$ we have
\begin{equation}\label{31.5}
M_{2\e}(A)\le N_\e(A)\le M_\e(A).  
\end{equation}
\end{Theorem}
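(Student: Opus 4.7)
The plan is to prove the two inequalities in (\ref{31.5}) separately by two short direct arguments: the right one by a maximality argument, the left one by a pigeonhole argument on balls of an optimal cover.

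For the right inequality $N_\e(A) \le M_\e(A)$, I would fix a maximal $\e$-distinguishable set $\{x^1,\dots,x^m\} \subseteq A$ with $m=M_\e(A)$, and claim that the balls $B_X(x^i,\e)$, $i=1,\dots,m$, cover $A$. Indeed, if some $x\in A$ were not in $\bigcup_i B_X(x^i,\e)$, then $\|x-x^i\|>\e$ for every $i$, so $\{x^1,\dots,x^m,x\}$ would still satisfy (\ref{31.4}), contradicting maximality of $m$. Hence $N_\e(A)\le m$.

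For the left inequality $M_{2\e}(A)\le N_\e(A)$, I would take an optimal $\e$-cover $A\subseteq\bigcup_{j=1}^n B_X(y^j,\e)$ with $n=N_\e(A)$, and any $2\e$-distinguishable set $\{x^1,\dots,x^m\}\subseteq A$. Assign to each $x^i$ some index $j(i)$ with $x^i\in B_X(y^{j(i)},\e)$. If $j(i)=j(k)$ for $i\ne k$, then by the triangle inequality
\[
\|x^i-x^k\| \le \|x^i-y^{j(i)}\|+\|y^{j(k)}-x^k\| \le 2\e,
\]
contradicting (\ref{31.4}) applied with threshold $2\e$. Thus $i\mapsto j(i)$ is injective and $m\le n$, giving $M_{2\e}(A)\le N_\e(A)$.

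Neither direction poses any real obstacle; the only thing to be slightly careful about is the strict-versus-non-strict inequality in the definition of an $\e$-distinguishable set. With the strict inequality $\|x^i-x^j\|>\e$ as given in (\ref{31.4}), both arguments above go through exactly as written (the maximality argument uses $\|x-x^i\|>\e$, and the pigeonhole argument derives a non-strict $\le 2\e$, which still contradicts $>2\e$). So the proof reduces to writing down these two observations cleanly.
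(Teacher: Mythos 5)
Your proof is correct and is precisely the standard argument; the paper itself gives no proof, simply citing the statement as well known, and your two observations (maximality forces the distinguishable points' $\e$-balls to cover $A$, and the triangle inequality forbids two $2\e$-separated points from sharing a ball of an $\e$-cover) are exactly what is intended. Your remark on the strict inequality in (\ref{31.4}) is handled correctly in both directions.
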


Consider the entropy numbers $\e_k(A,X)$:
$$
\e_k(A,X) :=  \inf \{\e : \exists y^1,\dots ,y^{2^k} \in X : A \subseteq \cup_{j=1}
^{2^k} B_X(y^j,\e)\}.
$$

The following theorem is from \cite{Tappr}.
\begin{Theorem}\label{T3.1} Let a compact $F\subset X$ be such that there exists a normalized system $\D$, $|\D|=N$, and  a number $r>0$ such that 
$$
  \sigma_k(F,\D)_X \le k^{-r},\quad m\le N.
$$
Then for $k\le N$
\begin{equation}\label{3.0}
\e_k(F,X) \le C(r) \left(\frac{\log(2N/k)}{k}\right)^r.
\end{equation}
\end{Theorem}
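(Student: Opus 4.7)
The plan is to combine the hypothesis on best $m$-term approximation with the classical volume-based covering of balls in finite-dimensional Banach spaces. Writing $R$ for a constant such that $F\subset RB_X$ (which exists because $F$ is compact), I choose an integer $m\in[1,N]$ and observe that every $f\in F$ lies within $m^{-r}$ of $\Sigma_m(\D)\cap(R+1)B_X$ by the hypothesis $\sigma_m(F,\D)_X\le m^{-r}$; it therefore suffices to cover this latter set efficiently in $X$.

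The set $\Sigma_m(\D)\cap(R+1)B_X$ is the union, over the $\binom{N}{m}$ supports $\Lambda\subset\D$ of size $m$, of the ball of radius $R+1$ in the $m$-dimensional subspace $X_\Lambda:=\mspan\{g:g\in\Lambda\}$. A standard volumetric argument shows that each such ball admits an $(m^{-r})$-covering in $X$ of cardinality at most $(3(R+1)m^r)^m$. Taking the union over $\Lambda$ and incorporating the approximation error $m^{-r}$, the set $F$ is covered by at most $\binom{N}{m}(3(R+1)m^r)^m$ balls of radius $2m^{-r}$. Using $\log_2\binom{N}{m}\le m\log_2(eN/m)$, a budget of $2^k$ balls suffices provided
$$m\log_2(eN/m)+r\,m\log_2 m+c_R\,m\le k. \qquad\qquad (\ast)$$

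The crucial step is then to choose $m$ as large as possible compatible with $(\ast)$ so that $2m^{-r}\le C(r)(\log(2N/k)/k)^r$. I would take $m=\lfloor c_r k/\log_2(2N/k)\rfloor$ for a small constant $c_r>0$ depending only on $r$. With this choice $\log_2(eN/m)\asymp\log_2(2N/k)$, so the first summand in $(\ast)$ is easily absorbed into $k/3$. The main obstacle is the term $rm\log_2 m$, which requires $\log_2 m\le C\log_2(2N/k)$: this is immediate when $k\le N^{1/2}$ (since $\log_2 k\le\tfrac12\log_2 N\le\log_2(2N/k)$), while in the range $k>N^{1/2}$ one exploits $\log_2(2N/k)\ge 1$ and shrinks $c_r$ accordingly (alternatively, a multi-scale decomposition of $f$ via a telescoping sum $s_{m_0}(f)+\sum_j(s_{m_{j+1}}(f)-s_{m_j}(f))$ with dyadic $m_j$ may be employed to avoid the case analysis). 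Once $(\ast)$ is verified, the cover yields $N_{2m^{-r}}(F,X)\le 2^k$, and hence $\e_k(F,X)\le 2m^{-r}\le C(r)(\log(2N/k)/k)^r$, which is \eqref{3.0}.
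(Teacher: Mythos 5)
The paper does not actually prove Theorem \ref{T3.1}; it is imported from \cite{Tappr}, where the argument is a multi-scale (telescoping) refinement of exactly the scheme you set up. Your framework --- replace $f$ by a near-best $m$-term approximant, take the union over the $\binom{N}{m}$ supports, and cover each $m$-dimensional ball volumetrically --- is the right one, and your bookkeeping up to $(\ast)$ is correct. The problem is the term $r\,m\log_2 m$ in $(\ast)$, and your proposed remedy for the range $k>N^{1/2}$ does not work: shrinking the \emph{constant} $c_r$ cannot absorb a factor $\log_2 m$ that grows with $N$. Concretely, take $k=N$, where the target is $\e_N(F,X)\le C(r)(\log 2/N)^r\asymp N^{-r}$. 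Your count forces $r\,m\log_2 m\le k=N$, hence $m\lesssim N/\log N$, and the radius you obtain is $2m^{-r}\gtrsim(\log N/N)^r$ --- off by a factor $(\log N)^r$. The same loss occurs whenever $\log(2N/k)=o(\log k)$. So the single-scale argument proves the theorem only in the regime $\log m\lesssim\log(2N/k)$ (e.g.\ $k\le N^{1-\delta}$), and the telescoping decomposition you mention in parentheses is not a cosmetic alternative but the essential ingredient for the remaining range.

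Moreover, the telescoping step needs genuine work that the proposal does not supply. One sets $\bar{n}\asymp k/\log_2(2N/k)$, takes dyadic scales $n_j=2^jn_0$ from $n_0\asymp\bar{n}/\log\bar{n}$ up to $n_J\asymp\bar{n}$, and writes $f=s_{n_0}(f)+\sum_{j=1}^J\bigl(s_{n_j}(f)-s_{n_{j-1}}(f)\bigr)+\bigl(f-s_{n_J}(f)\bigr)$. Each difference is $3n_{j-1}$-sparse with norm $\ll n_{j-1}^{-r}$, so the ratio of ball radius to covering accuracy at level $j$ can be kept bounded by a quantity whose logarithm is $O\bigl((J-j)+1\bigr)$ rather than $r\log_2 m$; one must then choose geometrically decaying accuracies $\delta_j$ summing to $O(\bar{n}^{-r})$ and verify that the total entropy cost $\sum_j n_{j-1}\bigl[\log(eN/n_{j-1})+\log(1/\epsilon_j)\bigr]$ is $O\bigl(\bar{n}\log(2N/\bar{n})\bigr)\asymp k$. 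Two smaller points: your constant acquires a dependence on $R=\sup_{f\in F}\|f\|$, whereas the stated $C(r)$ presupposes the normalization $F\subset B_X$ (the $m=0$ instance of the hypothesis); and the range $k\lesssim\log(2N)$, where your $m$ equals $0$, should be dispatched separately (there the right-hand side of (\ref{3.0}) is bounded below by a constant, so the bound is trivial for normalized $F$). I note that in the one place this paper uses Theorem \ref{T3.1} (the proof of Theorem \ref{T3.4}, with $k\asymp n2^n$ and $N\asymp 4^n$) one has $\log(2N/k)\asymp\log k$, so your single-scale argument would in fact suffice for that application --- but not for the theorem as stated.
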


We use the above theorem to prove the following lower bound for 
best $m$-term approximations.
\begin{Theorem}\label{T3.4} In the case $d=2$ the following lower bound holds for any $q<\infty$, $r>1/q$
$$
\sigma_m(\bW^r_q)_\infty \gg m^{-r} (\log m)^{1/2}.
$$
\end{Theorem}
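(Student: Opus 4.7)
The plan is to contrapose Theorem \ref{T3.1} against a known lower bound for entropy numbers. Specifically, for $d=2$, $1<q<\infty$, $r>1/q$, there is available the sharp lower bound
\be\label{EntropyLB}
\e_k(\bW^r_q,L_\infty)\gg k^{-r}(\log k)^{r+1/2}
\ee
(due to E.S.~Belinskii). It is this bound that forces the restriction $d=2$, since the corresponding sharp form in higher dimensions is not known. Theorem \ref{T3.1} converts an assumed upper bound $\sigma_k(F,\D)_X\le Mk^{-r}$ (via rescaling $F$ by $1/M$) into $\e_k(F,X)\le C(r)M(\log(2N/k)/k)^r$, so once we transfer a hypothetical smallness of $\sigma_m(\bW^r_q)_\infty$ into the hypothesis of Theorem \ref{T3.1}, comparison with \eqref{EntropyLB} yields the desired lower bound on $\sigma_m$.

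The reduction to a finite dictionary proceeds as follows. Fix large $n$ and let $V_n$ be a hyperbolic-cross de la Vall\'ee-Poussin-type operator which is the identity on $\Tr(Q_n)$, has image in $\Tr(Q_{cn})$ for some absolute constant $c$, satisfies $\|V_n\|_{L_\infty\to L_\infty}\ll \log n$ (the $d=2$ Dirichlet-type bound), and, for $f\in\bW^r_q$, obeys $\|f-V_nf\|_\infty\ll 2^{-rn}n^{a}$ with $a=a(q)$ coming from the standard hyperbolic cross error bounds. Set $F:=V_n(\bW^r_q)$ and $\D_n:=\{e^{i(\bk,\bx)}:\bk\in Q_{cn}\}$, so $|\D_n|=N\asymp 2^nn$ and the elements of $\D_n$ are $L_\infty$-normalized. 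For any $m$-term trigonometric polynomial $u$, $V_nu$ is an $m$-term polynomial supported in $Q_{cn}$, so $\sigma_m(g,\D_n)_\infty\ll(\log n)\,\sigma_m(g,\Tr^d)_\infty$ for $g\in\Tr(Q_{cn})$. Combined with $\sigma_k(g,\Tr^d)_\infty\le\sigma_k(\bW^r_q)_\infty+\|f-V_nf\|_\infty$, this transfers any hypothesis $\sigma_k(\bW^r_q)_\infty\le\psi(k)k^{-r}$ into $\sigma_k(F,\D_n)_\infty\le Mk^{-r}$ with $M\ll\psi(k)\log n$ plus negligible terms.

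Choosing $n$ so that $N\asymp k^{\alpha}$ for a fixed $\alpha>1$ makes $\log(2N/k)\asymp\log k$ and the $V_n$-tail $2^{-rn}n^{a}\asymp k^{-\alpha r}(\log k)^{a}$ much smaller than the target $k^{-r}(\log k)^{r+1/2}$. Theorem \ref{T3.1} together with $\e_k(\bW^r_q,L_\infty)\le\e_k(F,L_\infty)+\|f-V_nf\|_\infty$ then gives $\e_k(\bW^r_q,L_\infty)\ll\psi(k)(\log\log k)(\log k)^{r}k^{-r}$, and comparison with \eqref{EntropyLB} forces $\psi(k)\gg(\log k)^{1/2}/\log\log k$, yielding the claimed lower bound. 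The main obstacle is the truncation step: passing from the infinite trigonometric system to the finite $\D_n$ costs an operator-norm factor $\log n\asymp \log\log k$, and recovering the exact exponent $1/2$ instead of $1/2-o(1)$ requires either a sharper truncation scheme or an iteration with a slowly growing $\alpha$ that absorbs the double-logarithmic loss; the second, deeper ingredient is the entropy lower bound \eqref{EntropyLB} itself, which is the reason the theorem is stated only for $d=2$.
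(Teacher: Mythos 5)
Your high-level strategy is the same as the paper's: contrapose Theorem \ref{T3.1} against an entropy lower bound that ultimately comes from the Small Ball Inequality. But as written your argument has a genuine gap, concentrated in the truncation step, and you yourself flag it without resolving it. First, the operator-norm bound you assign to the hyperbolic-cross de la Vall\'ee-Poussin operator is wrong: for $d=2$ one has $\|V_{Q_n}\|_{L_\infty\to L_\infty}\asymp n^{d-1}=n\asymp\log N\asymp\log k$, not $\log n\asymp\log\log k$ (the operator is a sum over $\asymp n$ univariate de la Vall\'ee-Poussin factors and this linear growth is sharp; only for $1<p<\infty$ is it uniformly bounded). With the correct norm, the transfer $\sigma_k(\bW^r_q)_\infty\le\psi(k)k^{-r}\Rightarrow\sigma_k(F,\D_n)_\infty\ll\psi(k)(\log k)k^{-r}$ loses a full $\log k$, and comparison with $\e_k\gg k^{-r}(\log k)^{r+1/2}$ yields only $\psi(k)\gg(\log k)^{-1/2}$, i.e.\ nothing. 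Second, even granting your $\log\log k$ figure, you concede the argument only gives $\psi(k)\gg(\log k)^{1/2}/\log\log k$, which is strictly weaker than the stated theorem; ``a sharper truncation scheme or an iteration'' is exactly the missing idea, not a detail.

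The fix is to truncate to a full box rather than a hyperbolic cross: the tensor-product de la Vall\'ee-Poussin operator for $\Pi(M,M)$ has $L_\infty\to L_\infty$ norm bounded by an absolute constant, maps $m$-term polynomials to $m$-term polynomials supported in a box dictionary of size $N\asymp M^2$, and choosing $M=k^{\alpha}$ with $\alpha$ large enough makes the tail $\|f-Vf\|_\infty\ll M^{-r+1/q}$ negligible while keeping $\log(2N/k)\asymp\log k$. This is precisely what the paper does implicitly when it takes $\D:=\{e^{i(\bk,\bx)}:\|\bk\|_\infty\le 2^{n+1}\}$ and asserts $\sigma_m(F,\D)_\infty\ll\sigma_m(F,\Tr)_\infty$. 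Beyond that, the paper's route differs from yours in a way worth noting: instead of quoting the entropy lower bound for the whole class $\bW^r_q$, it builds a finite-dimensional compact $F(Y_n)_\infty$ supported on one hyperbolic layer, gets $M_{c}(F(Y_n)_\infty)$ large in $L_\infty$ from the volume estimate (Lemma \ref{L3.2}) combined with the Small Ball Inequality (\ref{2.6.3}), applies Theorem \ref{T3.1} to this $F$, and only then rescales $F$ into $\bW^r_q$ by $c_1(q)n^{-1/2}2^{-rn}$. That self-contained construction is what produces the clean $(\log m)^{1/2}$; your version can be repaired to match it, but not with the hyperbolic-cross truncation as proposed.
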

\begin{proof}   
 We will use a special inequality from \cite{TE3}, which is called the Small Ball Inequality. 
 For an even number $n$ define
$$
Y_n:=\{\bs=(2n_1,2n_2),\quad n_1+n_2=n/2\}.
$$
Then for any coefficients $\{c_\bk\}$
\begin{equation}\label{2.6.3}
\|\sum_{\bs\in Y_n} \sum_{\bk\in \rho(\bs)}c_\bk e^{i(\bk,\bx)}\|_\infty \ge C\sum_{\bs\in Y_n}\|\sum_{\bk\in\rho(\bs)} c_\bk e^{i(\bk,\bx)}\|_1,
\end{equation}
where $C$ is a positive number. Inequality (\ref{2.6.3}) plays a key role in the proof of lower bounds for the entropy numbers.

Take any even $n\in\N$, which will be chosen, depending on $m$, later. Consider the following compact
$$
F(Y_n)_\infty :=\{t=\sum_{\bs\in Y_n}t_\bs : t_\bs \in \Tr(\rho(\bs)),\quad \|t_\bs\|_\infty \le 1\}.
$$
The known results on volumes of sets of Fourier coefficients of trigonometric polynomials imply the following lemma (see \cite{TE2} and \cite{TE3}).
\begin{Lemma}\label{L3.2} There exist $2^{n2^{n-1}}$ functions $f_j\in F(Y_n)_\infty$, $j=1,\dots,2^{n2^{n-1}}$ such that for $i\neq j$
$$
\|f_i-f_j\|_2 \gg n^{1/2}.
$$
\end{Lemma} 
We now show that for $f_j$ from Lemma \ref{L3.2} we have
$$
\|f_i-f_j\|_\infty \gg n .
$$ 
Indeed, for any $f\in F(Y_n)_\infty$ we have
$$
\|f\|_2^2 = \sum_{\bs\in Y_n} \|t_\bs\|_2^2 \le  \sum_{\bs\in Y_n} \|t_\bs\|_1\|t_\bs\|_\infty \le \sum_{\bs\in Y_n} \|t_\bs\|_1.
$$
It remains to apply the Small Ball Inequality (\ref{2.6.3}).
Therefore, for $k=n2^{n-1}$ we get by Theorem \ref{Theorem 33.1}
 $$
 \ep_k(F(Y_n)_\infty)_\infty \gg \log k.
 $$
 We now use Theorem \ref{T3.1}. We specify $F:=F(Y_n)_\infty$, $\D:=\{e^{i(\bk,\bx)}: \|\bk\|_\infty \le 2^{n+1}\}$, $X:=L_\infty$. It is clear that for $l\ge \dim T(Y_n) \asymp n2^n \asymp k$
 we have 
 $$
 \sigma_l(F,\D) =0.
 $$
 Also, for any $f\in F$ we have
 $$
 \|f\|_\infty \le n/2 \ll \log k.
 $$
 Denote 
 $$
 B:= \max_l l^r \sigma_l(F,\D)_\infty.
 $$
 By Theorem \ref{T3.1} we obtain
 $$
 \log k \ll B n^r k^{-r} \quad \text{and}\quad B\gg n^{-r}k^r\log k.
 $$
 This implies that there is $l\asymp k$ such that 
 $$
 \sigma_l(F,\D)_\infty \gg n^{-r}\log k \asymp (\log k)^{1-r}.
 $$
 Next, it is clear that for any $m$
 $$
 \sigma_m(F,\D)_\infty \ll \sigma_m(F,\Tr)_\infty.
 $$
 Further, by Littlewood-Paley theorem there is $c_1(q)>0$ such that
 $$
 c_1(q)n^{-1/2}2^{-rn} F \subset \bW^r_q,\quad q<\infty.
 $$
 This completes the proof. 
 
 \end{proof}
 
\section{Numerical Integration} 

\subsection{Notations. The problem setting}
Numerical integration seeks good ways of approximating an integral
$$
\int_\Omega f(\bx)d\mu
$$
by an expression of the form
\be\label{6.1}
\Lambda(f,X_m) :=\sum_{j=1}^m\la_jf(\xi^j), 
\ee
 where $X_m:=\{\xi^1,\dots,\xi^m\},\quad \xi^j \in \Omega,\quad j=1,\dots,m,  
$. For  a function class $\bW$ denote
$$
\Lambda(\bW,X_m):=\sup_{f\in\bW} |\int_\Omega f(\bx)d\mu - \Lambda(f,X_m)|.
$$
We are interested in dependence on $m$ of the best $m$-knot error of numerical integration
$$
\delta_m (\bW) := \inf_{ \lambda_1,\dots,\lambda_m; \xi^{1},\dots,
\xi^m}\Lambda(\bW,X_m)
$$
for some function classes $\bW$.  

\subsection{Known lower bounds}
The reader can find results and historical comments on numerical integration of classes of functions with mixed smoothness in the book \cite{TBook}, Ch.4 and the survey paper \cite{T11}.
The following theorem was proved in \cite{Tem22}.
\begin{Theorem}\label{T6.2.1} The following lower estimate is valid for any
cubature formula $(\Lambda,X_m)$ with $m$ knots $(r > 1/p)$
$$
\Lambda(\bW_{p}^r,X_m) \ge C(r,d,p)m^{-r}
(\log m)^{\frac{d-1}{2}},\qquad 1 \le p < \infty .
$$
\end{Theorem}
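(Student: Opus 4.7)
The plan is the classical Bakhvalov-type fooling-function argument for cubature lower bounds, adapted to the mixed-smoothness setting. Given any cubature formula $\Lambda(f, X_m) = \sum_j \lambda_j f(\xi^j)$, I will construct $f \in \bW^r_p$ with $f(\xi^j) = 0$ for every $j$ and $|\hat f(\mathbf 0)| \gg m^{-r}(\log m)^{(d-1)/2}$. Then $\Lambda(f, X_m) = 0$ and the cubature error is exactly $|\hat f(\mathbf 0)|$, yielding the claimed lower bound.

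First I would fix the scale by choosing $n \in \N$ with $2^n n^{d-1} \asymp Cm$ for a large absolute constant $C$, so $n \asymp \log m$ and $\dim \Tr(Q_n) \gg m$. By a dimension count, there exists $u \in \Tr(Q_n)$ satisfying the $m+1$ linear conditions $u(\xi^j) = 0$ ($j = 1, \dots, m$) and $\hat u(\mathbf 0) = 1$; the degenerate possibility that the mean-value functional on $\Tr(Q_n)$ lies in the span of the point evaluations (as would hold for a Smolyak-type exact cubature) is ruled out by enlarging $n$ by an absolute constant, since one cannot have exactness on $\Tr(Q_{n'})$ for all $n' \asymp n$ using only $m$ knots once $\dim \Tr(Q_{n'}) \gg m$.

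The heart of the argument is to select $u$ so that $\|u^{(r)}\|_p \ll m^r (\log m)^{-(d-1)/2}$, for then $f := u / \|u^{(r)}\|_p \in \bW^r_p$ is the desired fooling function. I would realize $u$ not as a bare linear-algebra solution but as a structured sum $u = 1 + \sum_{\|\bs\|_1 = n} u_\bs$ with $u_\bs \in \Tr(\rho(\bs))$ built from localized trigonometric bumps (e.g.\ de la Vall\'ee-Poussin kernels) associated to cells of the dyadic partition of $\T^d$ with side lengths $(2^{-s_1},\dots,2^{-s_d})$. The constraint $u(\xi^j) = 0$ becomes $\sum_\bs u_\bs(\xi^j) = -1$ at each knot, and I would distribute these contributions across the $\asymp n^{d-1}$ blocks of level $n$: each block has $\asymp 2^n$ degrees of freedom and is asked to match roughly $2^n \asymp m/n^{d-1}$ nearby knot values. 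The Littlewood--Paley square-function inequality applied across the blocks $\bs$ with $\|\bs\|_1 = n$ bounds $\|u\|_p$ by an $\ell^2$-type aggregate of the $\|u_\bs\|_p$'s; combined with the Bernstein-type estimate $\|u_\bs^{(r)}\|_p \ll 2^{rn}\|u_\bs\|_p$ on each block and another square-function application, a careful balance gives $\|u^{(r)}\|_p \ll 2^{rn} n^{(d-1)(r-1/2)} \asymp m^r(\log m)^{-(d-1)/2}$ after substituting $2^n n^{d-1} \asymp m$.

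The main obstacle is the norm bound in the previous paragraph. A bare dimension-count existence for $u$ gives no useful control on $\|u^{(r)}\|_p$, and extracting the exponent $(d-1)/2$ in the polylog requires genuinely exploiting the near-independence of the $\asymp n^{d-1}$ dyadic blocks $\rho(\bs)$ via Littlewood--Paley (or an equivalent volume/entropy argument on the hyperbolic cross, in the spirit of Theorem~\ref{T1.2}). The combinatorial matching of the $m$ knot-vanishing conditions across the blocks — so that each block's $\asymp 2^n$ degrees of freedom absorb only its fair share $\asymp 2^n$ of the constraints — is the technically delicate part, and is precisely where the mixed-smoothness factor $(\log m)^{(d-1)/2}$ arises, in contrast to the classical $d = 1$ Bakhvalov bound, which carries no polylog factor.
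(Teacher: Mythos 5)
Your plan stalls exactly where you admit it does, and the resolution you sketch would not work. Two concrete problems. First, the scale is wrong: you take $2^n n^{d-1}\asymp m$, so each block $\rho(\bs)$ with $\|\bs\|_1=n$ has only $\asymp 2^n\asymp m/n^{d-1}$ degrees of freedom, and you propose to let each block absorb only ``its fair share'' of the $m$ vanishing conditions. But $u_\bs\in\Tr(\rho(\bs))$ is a global trigonometric polynomial: it does not vanish at the knots you did not assign to its block, so the constraint $\sum_\bs u_\bs(\xi^j)=-1$ cannot be decoupled block by block; moreover, interpolating prescribed values at $\asymp 2^n$ \emph{arbitrary} knots by a polynomial with that many coefficients, with controlled uniform norm, is exactly the kind of statement that fails for adversarial knots (nothing forces them to be spread among your dyadic cells). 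Second, even granting the existence of $u$ with $\hat u(\mathbf 0)=1$ vanishing at the knots, a dimension count plus Littlewood--Paley gives no lower bound on the mean value relative to the norm; you never produce the quantitative input that makes $\hat u(\mathbf 0)$ large compared with $\|u\|_{\bW^r_p}$.

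The proof in \cite{Tem22}, summarized in Section 4.2, removes both difficulties by choosing the opposite scale $m\le 2^{n-1}<2m$, so that \emph{every single block} $\Tr(2^{\bs},d)$, $\|\bs\|_1=n$, has dimension $\ge 2m$. The subspace of polynomials in $\Tr(2^{\bs},d)$ vanishing at all $m$ knots then has proportional dimension, and Theorem \ref{T1.2} supplies $t^1_{\bs}$ in it with $\|t^1_{\bs}\|_\infty=1$ and $\|t^1_{\bs}\|_2\ge c(d)>0$. Setting $t_{\bs}=|t^1_{\bs}|^2$ gives $\Lambda(t_{\bs},X_m)=0$, $\|t_{\bs}\|_\infty\le1$, and, crucially, $\hat t_{\bs}(\mathbf 0)=\|t^1_{\bs}\|_2^2\ge c(d)^2$: this is the quantitative mean-value lower bound your sketch lacks (Lemma \ref{L6.2.2}). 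Summing over the $\asymp n^{d-1}$ blocks, the individual errors add, $\hat t(\mathbf 0)-\Lambda(t,X_m)\gg n^{d-1}$, while the norm aggregates only in $\ell^2$ over the blocks, $\|t\|_{\bW^r_p}\ll\|t\|_{\bB^r_{p,2}}\ll 2^{rn}n^{(d-1)/2}$, and the ratio gives $2^{-rn}n^{(d-1)/2}\asymp m^{-r}(\log m)^{(d-1)/2}$. So the logarithmic factor comes from the $\ell^1$-versus-$\ell^2$ discrepancy over blocks, each of which \emph{individually} defeats the cubature formula by vanishing at all $m$ knots --- not from a combinatorial distribution of the knot constraints among the blocks.
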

The proof of this theorem is based on Theorem \ref{T1.2} from Introduction.   
Theorem \ref{T1.2} is used to prove the following assertion.

\begin{Lemma}\label{L6.2.2} Let the coordinates of the vector $ \bs$ be natural
numbers and $\| \bs\|_1 = n$. Then for any $N\le 2^{n-1}$ and an
arbitrary cubature formula $(\Lambda,X_N)$ with $N$ knots there is a
$t_{ \bs}\in \Tr(2^{ \bs} ,d)$ such that
$\|t_{ \bs}\|_{\infty} \le 1$ and
\be\label{6.2.1}
\hat t_{ \bs} (0) - \Lambda(t_{ \bs},X_N) \ge C(d) > 0.
\ee
\end{Lemma}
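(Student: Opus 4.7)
The core idea is to produce $t_\bs$ as the square of a trigonometric polynomial $u$ that vanishes at every knot and carries a nontrivial $L_2$ mass. If such $u$ can be found with $\|u\|_\infty\le 1$, then $t_\bs:=u^2$ automatically satisfies $\|t_\bs\|_\infty\le 1$, and $\Lambda(t_\bs,X_N)=\sum_{j}\lambda_j u(\xi^j)^2=0$, so
\[
\hat t_\bs(0) - \Lambda(t_\bs,X_N) \;=\; \int u^2 \;=\; \|u\|_2^2,
\]
and the conclusion (\ref{6.2.1}) reduces to the lower bound $\|u\|_2\ge C(d)>0$, which is the sort of statement Theorem \ref{T1.2} is designed to deliver.

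To stay inside the right frequency box, I would apply Theorem \ref{T1.2} not to $\Tr(2^\bs,d)$ itself but to the half-frequency space
\[
V \;:=\; \Tr(2^{\bs-\mathbf 1},d),
\]
since if $u\in V$ then the frequencies of $u$ satisfy $|k_j|\le 2^{s_j-1}$ and hence the frequencies of $u^2$ satisfy $|k_j|\le 2^{s_j}$, i.e.\ $u^2\in\Tr(2^\bs,d)$. Inside $V$, take the interpolation subspace
\[
\Psi \;:=\; \{\,u\in V : u(\xi^j)=0,\ j=1,\dots,N\,\}.
\]
Each condition $u(\xi^j)=0$ is one (real) linear equation on $V$, so $\dim\Psi\ge \dim V - N$.

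The dimension count is the one quantitative step. Using that each $s_j\ge 1$,
\[
\dim V \;=\; \prod_{j=1}^d(2^{s_j}+1) \in [\,2^n,\ 2^{n+d}\,],
\]
and the hypothesis $N\le 2^{n-1}$ gives $\dim\Psi\ge 2^n-2^{n-1}=2^{n-1}$; therefore
$\dim\Psi/\dim V \ge 2^{n-1}/2^{n+d}=2^{-d-1}$. Feeding $\Psi\subset V$ into Theorem \ref{T1.2} with $\varepsilon=2^{-d-1}$ produces a real $u\in\Psi$ with $\|u\|_\infty=1$ and $\|u\|_2\ge C(d)>0$. Setting $t_\bs:=u^2$ then yields $t_\bs\in\Tr(2^\bs,d)$, $\|t_\bs\|_\infty=1$, $\Lambda(t_\bs,X_N)=0$, and $\hat t_\bs(0)-\Lambda(t_\bs,X_N)=\|u\|_2^2\ge C(d)^2>0$, as required.

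The only mildly delicate point is choosing the right ambient space: passing from $\bs$ to $\bs-\mathbf 1$ is forced by the squaring trick, and it is precisely the factor of $1/2$ built into the hypothesis $N\le 2^{n-1}$ that still leaves $\dim\Psi$ a $d$-dependent positive fraction of $\dim V$ after removing the $N$ interpolation conditions; beyond this bookkeeping the argument is a direct invocation of Theorem \ref{T1.2}.
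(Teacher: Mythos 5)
Your proof is correct and follows essentially the same route as the paper's (the construction from \cite{Tem22}, mirrored in the proof of Theorem \ref{T4.2} here): apply Theorem \ref{T1.2} to the subspace of $\Tr(2^{\bs-\mathbf 1},d)$ vanishing at the knots, then square (the paper takes $|t^1_\bs|^2$, which also covers the case where the extremal polynomial is complex-valued, a point worth noting since Theorem \ref{T1.2} does not promise a real $u$). The dimension count and the reduction of (\ref{6.2.1}) to $\|u\|_2^2\ge C(d)$ are exactly as intended.
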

For a given $m$  
choose $n$ such that
$$
 m \le 2^{n-1}  < 2m.
$$
Consider the polynomial
$$
 t( \bx) =\sum_{\| \bs\|_1=n}t_{ \bs} ( \bx) ,
$$
where $t_{ \bs}$ are polynomials from Lemma \ref{L6.2.2} with $N = m$.
Then
\be\label{6.2.2}
\hat t( 0) - \Lambda(t,X_m)\ge C(d)n^{d-1}.
\ee

The proof of Theorem \ref{T6.2.1} was completed by establishing that
\be\label{6.2.3}
\|t\|_{\bW^r_p} \ll \|t\|_{\bB^r_{p,2}} \ll 2^{rn} n^{(d-1)/2}.
\ee
Theorem \ref{T6.2.1} gives the same lower bound for different parameters $1\le p<\infty$. It is clear that the bigger the $p$ the stronger the statement. 

\subsection{New lower bounds}

We obtain lower bounds for numerical integration with respect to a special class of knots. 
Let $\bs = (s_1,\dots,s_d)$, $s_j\in \N_0$, $j=1,\dots,d$. We associate with $\bs$ a web
$W(\bs)$ as follows: denote 
$$
w(\bs,\bx) := \prod_{j=1}^d \sin (2^{s_j}x_j)
$$
and define
$$
W(\bs) := \{\bx: w(\bs,\bx)=0\}.
$$
\begin{Definition}\label{D4.1} We say that a set of knots $X_m:=\{\xi^i\}_{i=1}^m$ is an $(n,l)$-net if $|X_m\setminus W(\bs)| \le 2^l$ for all $\bs$ such that $\|\bs\|_1=n$.
\end{Definition}

\begin{Theorem}\label{T4.2} For any cubature formula $(\Lambda,X_m)$ with respect to a $(n,n-1)$-net $X_m$ we have
$$
\Lambda(\bW^r_p,X_m) \gg 2^{-rn}n^{(d-1)/2},\quad 1\le p<\infty.
$$
\end{Theorem}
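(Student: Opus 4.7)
The plan is to follow the duality approach from the proof of Theorem \ref{T6.2.1}: produce a single trigonometric polynomial $T$ for which $\Lambda(T,X_m)=0$, $\hat T(0)$ is large, and $\|T\|_{\bW^r_p}$ is controlled, so that $\Lambda(\bW^r_p,X_m) \gg (\hat T(0)-\Lambda(T,X_m))/\|T\|_{\bW^r_p}$. The new ingredient is to exploit the $(n,n-1)$-net structure: every knot off the web $W(\bs)$ is one of at most $2^{n-1}$ points, so a polynomial divisible by $w(\bs,\cdot)$ already annihilates the bulk of the knot evaluations for free, and only a small number of additional interpolation constraints must be imposed to kill it at the remaining knots.

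For each $\bs$ with $\|\bs\|_1 = n$, let $\Psi_\bs \subset \Tr(2^{\bs+\mathbf 1},d)$ be the subspace of polynomials of the form $w(\bs,\cdot)\,g$ with $g \in \Tr(2^\bs,d)$. Its dimension equals $\dim \Tr(2^\bs,d)$, which is a fixed positive fraction of $\dim \Tr(2^{\bs+\mathbf 1},d)$, with ratio depending only on $d$. Imposing the linear constraints $t(\xi^i)=0$ for the $\le 2^{n-1}$ knots $\xi^i \in X_m \setminus W(\bs)$ reduces the dimension by at most $2^{n-1}$, still leaving a fixed positive fraction of $\dim \Tr(2^{\bs+\mathbf 1},d)$. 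Theorem \ref{T1.2} then yields $t_\bs$ in this subspace with $\|t_\bs\|_\infty \le 1$ and $\|t_\bs\|_2 \ge C(d)>0$. Setting $T_\bs := |t_\bs|^2 \in \Tr(2^{\bs+\mathbf 2},d)$ produces a nonnegative polynomial with $\|T_\bs\|_\infty \le 1$, vanishing at every knot (on $W(\bs)$ by divisibility of $t_\bs$ by $w(\bs,\cdot)$, off $W(\bs)$ by construction), and satisfying $\hat T_\bs(0)=\|t_\bs\|_2^2 \ge C(d)^2$.

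Summing, $T := \sum_{\|\bs\|_1 = n} T_\bs$ has $\Lambda(T,X_m)=0$ and $\hat T(0) \gg n^{d-1}$. Since each $T_\bs$ satisfies $\|T_\bs\|_\infty \le 1$ and has spectrum in the box $\prod_j[-2^{s_j+2},2^{s_j+2}]$ (and hence touches only $O(1)$ dyadic blocks near $\bs$), the Besov-norm argument underlying (\ref{6.2.3}) gives $\|T\|_{\bW^r_p} \ll \|T\|_{\bB^r_{p,2}} \ll 2^{rn}n^{(d-1)/2}$ for $1 \le p < \infty$. Dividing delivers the desired bound $\Lambda(\bW^r_p,X_m) \gg n^{d-1}/(2^{rn}n^{(d-1)/2}) = 2^{-rn}n^{(d-1)/2}$. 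The main obstacle is the dimension count in the second paragraph: one must verify both that multiples of $w(\bs,\cdot)$ inside $\Tr(2^{\bs+\mathbf 1},d)$ form a fixed positive proportion of the ambient space and that this proportion survives the $2^{n-1}$ pointwise vanishing constraints, so that Theorem \ref{T1.2} is applied with $\varepsilon = \varepsilon(d)$ independent of $n$ and the constant $C(d)$ at the end of the argument is truly uniform in $\bs$.
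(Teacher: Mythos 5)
This is essentially the paper's own proof: the same test function obtained by applying Theorem \ref{T1.2} to a subspace of a box of trigonometric polynomials cut down by the $\le 2^{n-1}$ off-web knots, the same squared web factor to force vanishing on $W(\bs)$ while keeping $\hat t_\bs(0)\gg 1$, the same sum over the layer $\|\bs\|_1=n$, and the same norm bound via $\|t\|_{\bW^r_p}\ll\|t\|_{\bB^r_{p,2}}\ll 2^{rn}n^{(d-1)/2}$; the only (harmless) rearrangement is that you restrict to multiples of $w(\bs,\cdot)$ \emph{before} invoking Theorem \ref{T1.2}, whereas the paper takes $t^1_\bs$ in the plain box $\Tr(2^{\bs-\mathbf 1},d)$, imposes the vanishing conditions there, and multiplies by $w(\bs,\bx)^2$ afterwards, using the disjointness of the shifted spectra to keep $\|t^1_\bs w(\bs,\cdot)\|_2^2=2^{-d}\|t^1_\bs\|_2^2$ bounded below. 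One small inaccuracy: each $T_\bs$ meets about $\prod_j(s_j+O(1))$ dyadic blocks $\rho(\bs')$, not $O(1)$ of them, but this does not matter because in the $\bB^r_{p,2}$ computation the factor $2^{2r\|\bs'\|_1}$ makes the top layer $\|\bs'\|_1\approx n$ dominate, so your final bound is still correct.
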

\begin{proof} This proof is similar to that of Theorem \ref{T6.2.1} from \cite{Tem22}. Take $\bs$ such that $\|\bs\|_1=n$ and consider $\Tr(\bN,d)$ with $N_j:=2^{s_j-1}$, $j=1,\dots,d$. Then
$$
\dim\Tr(\bN,d) \ge 2^{\|\bs\|_1} = 2^n.
$$
Let $I(\bs)$ be a set of indexes such that
$$
X_m\setminus W(\bs) = \{\xi^i\}_{i\in I(\bs)}.
$$
Then by our assumption $|I(\bs)| \le 2^{n-1}$. Consider
$$
\Psi(\bs):=\{t\in \Tr(\bN,d): t(\xi_i)=0, i\in I(\bs)\}.
$$
Then $\dim\Psi(\bs) \ge 2^{n-1}$. By Theorem \ref{T1.2} we find $t_\bs^1 \in \Psi(\bs)$ such that $\|t_\bs^1\|_\infty =1$ and $\|t_\bs^1\|_2 \ge c(d)>0$. Consider
$$
t(\bx):=\sum_{\|\bs\|_1=n} t_\bs(\bx),\qquad t_\bs(\bx):= |t_\bs^1(\bx)|^2w(\bs,\bx)^2.
$$
We have
$$
t_\bs^1(\bx)w(s_1,x_1) 
$$
$$
= (2i)^{-1}\left(\sum_{\bk:|k_j|\le 2^{s_j-1}} {\hat t}_\bs^1(\bk)e^{i(\bk,\bx)+i2^{s_1}x_1} + \sum_{\bk:|k_j|\le 2^{s_j-1}} {\hat t}_\bs^1(\bk)e^{i(\bk,\bx)-i2^{s_1}x_1}\right).
$$
and
$$
\|t_\bs^1(\bx)w(s_1,x_1)\|_2^2 =  2^{-1}\|t_\bs^1\|_2^2.
$$
Therefore,
$$
\|t_\bs^1(\bx)w(\bs,\bx)\|_2^2 =  2^{-d}\|t_\bs^1\|_2^2 \ge c_1(d) >0.
$$
Then relation (\ref{6.2.2}) is obviously satisfied for our $t$. Relation (\ref{6.2.3}) is proved in the same way as it was proved in \cite{Tem22}.
\end{proof}

   The example that was constructed in the proof of Theorem \ref{T6.2.1} (see above) provides the lower bound for the Besov-type classes. 
   Other proof of Theorem \ref{T6.2.3} is given in \cite{DU}. 
     
\begin{Theorem}\label{T6.2.3} The following lower estimate is valid for any
cubature formula $(\Lambda,X_m)$ with $m$ knots $(r > 1/p)$
$$
\Lambda(\bB_{p,\theta}^r,X_m) \ge C(r,d,p)m^{-r}
(\log m)^{(d-1)(1-1/\theta)},\quad 1 \le p \le \infty,\quad 1\le \theta\le\infty .
$$
\end{Theorem}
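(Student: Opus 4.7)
The plan is to reuse, essentially verbatim, the extremal construction from the proof of Theorem \ref{T6.2.1} (together with Lemma \ref{L6.2.2}) and merely replace the norm estimate (\ref{6.2.3}) by its $\bB^r_{p,\theta}$ analogue. Given $m$, choose $n$ with $m\le 2^{n-1}<2m$ and set
$$
t(\bx) \;=\; \sum_{\|\bs\|_1=n} t_\bs(\bx),
$$
where each $t_\bs$ is furnished by Lemma \ref{L6.2.2} with $N=m$, so that $t_\bs\in\Tr(2^\bs,d)$, $\|t_\bs\|_\infty\le 1$ and $\hat t_\bs(0)-\Lambda(t_\bs,X_m)\ge C(d)>0$. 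Summing over the $\asymp n^{d-1}$ indices with $\|\bs\|_1=n$ still yields
$$
\hat t(0) - \Lambda(t,X_m) \;\ge\; C(d)\,n^{d-1},
$$
exactly as in (\ref{6.2.2}); this half of the argument requires no change.

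The new ingredient is the bound
$$
\|t\|_{\bB^r_{p,\theta}} \;\ll\; 2^{rn}\, n^{(d-1)/\theta}, \qquad 1\le p\le\infty,\ 1\le\theta\le\infty,
$$
replacing (\ref{6.2.3}). Each $t_\bs$ has spectrum in the box $\{|k_j|\le 2^{s_j}\}$, so $\delta_{\bs'}(t_\bs)=0$ unless $\bs'\le\bs$ componentwise, and at the top level $\|\bs'\|_1=n$ the only contributor is $\bs'=\bs$. Using $\|t_\bs\|_p\le\|t_\bs\|_\infty\le 1$, the top level contributes
$$
\sum_{\|\bs'\|_1=n}\|\delta_{\bs'}(t)\|_p^\theta\, 2^{rn\theta}\;\ll\; n^{d-1}\,2^{rn\theta}.
$$
For the lower levels $\|\bs'\|_1=n'<n$, the number of admissible $\bs\ge\bs'$ with $\|\bs\|_1=n$ is $\asymp (n-n'+1)^{d-1}$, and uniform $L_p$-boundedness of the dyadic projectors $\delta_{\bs'}$ (Littlewood--Paley for $1<p<\infty$) gives $\|\delta_{\bs'}(t)\|_p\ll(n-n'+1)^{d-1}$; since $r>0$, the geometric factor $2^{rn'\theta}$ makes the top level dominate the sum, yielding the stated bound. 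For $\theta=\infty$ one takes the supremum instead and obtains $\|t\|_{\bH^r_p}\ll 2^{rn}$.

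Combining the two steps, the normalized function $f:=t/\|t\|_{\bB^r_{p,\theta}}\in\bB^r_{p,\theta}$ satisfies
$$
\bigl|\hat f(0)-\Lambda(f,X_m)\bigr|\;\gg\; \frac{n^{d-1}}{2^{rn}\,n^{(d-1)/\theta}} \;\asymp\; m^{-r}\,(\log m)^{(d-1)(1-1/\theta)},
$$
which is the claimed lower bound. The main technical obstacle I anticipate is the endpoint range $p\in\{1,\infty\}$, where $\delta_{\bs'}$ is not uniformly $L_p$-bounded and naive application of the above bookkeeping costs a polylogarithmic factor. The clean workaround is to select the concrete $t_\bs$ produced by Theorem \ref{T1.2} inside a subspace of $\Tr(2^\bs,d)$ whose spectrum lies essentially in $\rho(\bs)\cup\{\mathbf 0\}$ (for instance by multiplying a lower-dimensional extremal polynomial by a product of $\sin(2^{s_j}x_j)$ factors, mirroring the modulation trick used in the proof of Theorem \ref{T4.2}); then $\delta_{\bs'}(t)$ reduces to a single summand and the Besov computation requires no Littlewood--Paley boundedness at all.
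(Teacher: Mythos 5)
Your proposal is correct and follows the paper's own route exactly: the paper likewise reuses the polynomial $t=\sum_{\|\bs\|_1=n}t_\bs$ from the proof of Theorem \ref{T6.2.1} together with the bound (\ref{6.2.2}), and simply replaces (\ref{6.2.3}) by the Besov norm estimate (\ref{6.2.4}), which it attributes to the argument in \cite{Tem22}. Your explicit verification of that norm bound, including the workaround at the endpoints $p\in\{1,\infty\}$, merely fills in details the paper delegates to the citation.
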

Indeed, the proof of (\ref{6.2.3}) from \cite{Tem22} implies
\be\label{6.2.4}
  \|t\|_{\bB^r_{p,\theta}} \ll 2^{rn} n^{(d-1)/\theta}.
\ee

In the same way the proof of Theorem \ref{T4.2} gives the following result.

\begin{Theorem}\label{T4.4} For any cubature formula $(\Lambda,X_m)$ with respect to a $(n,n-1)$-net $X_m$ we have
$$
\Lambda(\bB_{p,\theta}^r,X_m) \gg 2^{-rn}n^{(d-1)(1-1/\theta)},\quad 1\le p\le\infty.
$$
\end{Theorem}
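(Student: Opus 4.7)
The plan is to follow the proof of Theorem \ref{T4.2} essentially verbatim, changing only the smoothness-norm estimate at the last step. Specifically, I would reuse the same test polynomial
$$
t(\bx)=\sum_{\|\bs\|_1=n}t_{\bs}(\bx),\qquad t_{\bs}(\bx):=|t_{\bs}^1(\bx)|^2\,w(\bs,\bx)^2,
$$
where, for each $\bs$ with $\|\bs\|_1=n$ and $N_j=2^{s_j-1}$, the polynomial $t_{\bs}^1$ is extracted from the subspace
$$
\Psi(\bs)=\{u\in\Tr(\bN,d):u(\xi^i)=0\text{ for all }i\in I(\bs)\},\qquad I(\bs):=\{i:\xi^i\notin W(\bs)\},
$$
by Theorem \ref{T1.2}. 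The $(n,n-1)$-net assumption gives $|I(\bs)|\le 2^{n-1}$, hence $\dim\Psi(\bs)\ge 2^{n-1}\ge\frac{1}{2}\dim\Tr(\bN,d)$, so Theorem \ref{T1.2} yields a $t_{\bs}^1\in\Psi(\bs)$ with $\|t_{\bs}^1\|_\infty=1$ and $\|t_{\bs}^1\|_2\ge c(d)>0$.

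Next I would observe, exactly as in the proof of Theorem \ref{T4.2}, that the cubature value $\Lambda(t,X_m)$ vanishes for every choice of weights: on $W(\bs)$ the factor $w(\bs,\bx)^2$ kills $t_{\bs}$, and on $X_m\setminus W(\bs)$ the factor $|t_{\bs}^1|^2$ does, by construction of $\Psi(\bs)$. The zeroth Fourier coefficient is computed as in Theorem \ref{T4.2}: each summand contributes
$$
\hat t_{\bs}(0)=\|t_{\bs}^1(\bx)\,w(\bs,\bx)\|_2^2=2^{-d}\|t_{\bs}^1\|_2^2\ge c_1(d)>0,
$$
and since $|\{\bs:\|\bs\|_1=n\}|\asymp n^{d-1}$ we get $\hat t(0)\gg n^{d-1}$, while $\Lambda(t,X_m)=0$.

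The only genuinely new ingredient is replacing the Sobolev norm bound (\ref{6.2.3}) by the Besov-type bound (\ref{6.2.4}), which gives
$$
\|t\|_{\bB^r_{p,\theta}}\ll 2^{rn}n^{(d-1)/\theta},\qquad 1\le p\le\infty,\ 1\le\theta\le\infty.
$$
Combining these via the standard duality
$$
\Lambda(\bB^r_{p,\theta},X_m)\ge\frac{|\hat t(0)-\Lambda(t,X_m)|}{\|t\|_{\bB^r_{p,\theta}}}\gg\frac{n^{d-1}}{2^{rn}n^{(d-1)/\theta}}=2^{-rn}n^{(d-1)(1-1/\theta)}
$$
delivers the claim.

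The main (indeed only) obstacle is verifying that the bound (\ref{6.2.4}) applies to the modified test polynomial used here rather than to the simpler one in \cite{Tem22}: multiplying $t_{\bs}^1$ by $w(\bs,\bx)^2$ and squaring shifts the spectral support into a bounded number of dyadic blocks around $\rho(\bs)$, so that $\|\delta_{\bs'}(t_{\bs})\|_p\ll 1$ for $\bs'$ within a bounded distance of $\bs$ and zero otherwise. This is exactly the situation handled in \cite{Tem22}; summing the contribution in $\ell_\theta$ over $\{\bs:\|\bs\|_1=n\}$ produces the factor $n^{(d-1)/\theta}$, and the weight $2^{r\|\bs'\|_1}$ gives the factor $2^{rn}$. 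Once this Besov-norm calculation is confirmed to go through with the extra zero-interpolation constraint (which does not enlarge the frequency support), the proof is complete.
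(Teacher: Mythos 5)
Your proposal matches the paper's proof: the paper obtains Theorem \ref{T4.4} by repeating the construction of Theorem \ref{T4.2} verbatim (same test polynomial $t=\sum_{\|\bs\|_1=n}|t_\bs^1|^2w(\bs,\bx)^2$, same vanishing of $\Lambda(t,X_m)$ and lower bound $\hat t(0)\gg n^{d-1}$) and replacing the $\bW^r_p$-estimate (\ref{6.2.3}) by the Besov bound (\ref{6.2.4}), exactly as you do. The only cosmetic imprecision is that the spectrum of $|t_\bs^1|^2w(\bs,\bx)^2$ fills the whole box $|k_j|\le 2^{s_j+2}$ rather than a bounded number of blocks near $\rho(\bs)$, but the weight $2^{r\|\bs'\|_1}$ makes the low-frequency blocks negligible, so (\ref{6.2.4}) goes through as in \cite{Tem22}.
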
 

We note that Theorems \ref{T4.2} and \ref{T4.4} provide lower bounds for numerical integration with respect to sparse grids and their modifications. For $n\in\N$ we define the sparse grid $SG(n)$ as follows
$$
SG(n) := \{\xi(\bn,\bk) = (\pi k_12^{-n_1},\dots,\pi k_d 2^{-n_d}),
$$
$$
 0\le k_j<2^{n_j}, j=1,\dots,d,\quad \|\bn\|_1=n\}.
$$
Then it is easy to check that $SG(n)\subset W(\bs)$ with any $\bs$ such that $\|\bs\|_1=n$. Indeed, let $\xi(\bn,\bk)\in SG(n)$. Take any $\bs$ with $\|\bs\|_1=n$. Then $\|\bs\|_1=\|\bn\|_1$ and there exists $j$ such that $s_j\ge n_j$. For this $j$ we have
$$
\sin 2^{s_j}\xi(\bn,\bk)_j = \sin 2^{s_j}\pi k_j 2^{-n_j} =0\quad \text{and} \quad w(\bs,\xi(\bn,\bk)=0.
$$
This means that $SG(n)$ is an $(n,l)$-net for any $l$.
We note that $|SG(n)|\asymp 2^n n^{d-1}$. It is known (see \cite{T23}) that there exists a cubature formula $(\Lambda,SG(n))$ such that
\be\label{4.6}
\Lambda(\bH^r_p,SG(n)) \ll 2^{-rn}n^{d-1},\quad 1\le p\le \infty,\quad r>1/p.
\ee
Theorem \ref{T4.4} with $\theta=\infty$ shows that the bound (\ref{4.6}) is sharp. Moreover, 
Theorem \ref{T4.4} shows that even an addition of extra $2^{n-1}$ arbitrary knots to $SG(n)$ will not improve the bound in (\ref{4.6}). In the case $X_m=SG(n)$ other proof of Theorem \ref{T4.4} is given in \cite{DU}. 

\section{Approximate recovery}

Consider the following recovering operator. For fixed $m$, $X_m:=\{\xi^j\}_{j=1}^m$, and 
$\psi_1(\bx),\dots,\psi_m(\bx)$ define the linear operator
$$
\Psi(f,X_m):=\sum_{j=1}^m f(\xi^j)\psi_j(\bx).
$$
For a function class $\bW$ define
$$
\Psi(\bW,X_m)_p:=\sup_{f\in \bW}\|f-\Psi(f,X_m)\|_p.
$$
The main result of this section is the following theorem.
\begin{Theorem}\label{T5.1} For any recovering operator $\Psi(\cdot,X_m)$ with respect to a\newline $(n,n-1)$-net $X_m$ we have for $1\le q< p<\infty$
$$
\Psi(\bH^r_q,X_m)_p \gg 2^{-n(r-\beta)}n^{(d-1)/p},\quad \beta:=1/q-1/p.
$$
\end{Theorem}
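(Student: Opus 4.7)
The plan is to exhibit $f\in \bH^r_q$ vanishing on $X_m$ with $\|f\|_p\gg 2^{-n(r-\beta)}n^{(d-1)/p}$: since any recovering operator satisfies $\Psi(f,X_m)=0$ whenever $f|_{X_m}=0$, this yields the claimed bound. Choose $n$ so that $m\le 2^{n-1}$, so that the $(n,n-1)$-net property applies for every $\bs$ with $\|\bs\|_1=n$.

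For each such $\bs$, put $\bN_\bs:=(2^{s_1-1},\dots,2^{s_d-1})$ and consider the subspace
$$
\Psi(\bs):=\{g\in\Tr(\bN_\bs,d): g|_{X_m\setminus W(\bs)}=0\}.
$$
This has codimension at most $|X_m\setminus W(\bs)|\le 2^{n-1}$ in $\Tr(\bN_\bs,d)$, so $\dim\Psi(\bs)\ge\tfrac12\vartheta(\bN_\bs)$. As in the proof of Theorem~\ref{T4.2}, for any $g_\bs\in\Psi(\bs)$ the product $T_\bs:=g_\bs\cdot w(\bs)$ vanishes on all of $X_m$ (on $X_m\cap W(\bs)$ via the factor $w(\bs)$, and on $X_m\setminus W(\bs)$ via $g_\bs$). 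A direct computation shows that the Fourier support of $T_\bs$ lies in the shell $|k_j|\in[2^{s_j-1},3\cdot 2^{s_j-1}]$, so $\delta_{\bs'}(T_\bs)\ne 0$ only for $\bs'-\bs\in\{0,1\}^d$.

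Form $f:=\sum_{\|\bs\|_1=n}\epsilon_\bs T_\bs$ with signs $\epsilon_\bs\in\{\pm 1\}$. Each dyadic block $\bs'$ receives at most $2^d$ contributions, so the Littlewood--Paley boundedness of $\delta_{\bs'}$ on $L_q$ ($1<q<\infty$) yields $\|\delta_{\bs'}(f)\|_q\ll \max_\bs\|T_\bs\|_q$ whenever $\|\bs'\|_1\in[n,n+d]$, and hence $\|f\|_{\bH^r_q}\ll 2^{rn}\max_\bs\|T_\bs\|_q$. For the lower bound on $\|f\|_p$, one takes random signs and applies the Khintchine inequality:
$$
\mathbb{E}_\epsilon \|f\|_p^p\asymp\Bigl\|\Bigl(\sum_\bs|T_\bs|^2\Bigr)^{1/2}\Bigr\|_p^p\gg\sum_\bs\|T_\bs\|_p^p,
$$
the last inequality being sharp (up to constants) if the $T_\bs$ are concentrated at different points of $\T^d$. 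If each $T_\bs$ can be chosen Nikolskii-sharp, with $\|T_\bs\|_p\asymp 2^{n\beta}\|T_\bs\|_q\asymp 2^{n\beta}$, then summing over $|\theta_n|\asymp n^{d-1}$ indices delivers $\|f\|_p\gg n^{(d-1)/p}\,2^{n\beta}$; dividing by $\|f\|_{\bH^r_q}\ll 2^{rn}$ produces the claimed estimate.

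The main obstacle is constructing a Nikolskii-sharp $g_\bs$ inside the half-dimensional subspace $\Psi(\bs)$. Theorem~\ref{T1.2} as stated supplies only the flat ratio $\|g_\bs\|_\infty\asymp\|g_\bs\|_2$, which is already enough in the diagonal case $q=p$ (where $\beta=0$, since then the positive-combination version of Theorem~\ref{T4.2} already forces $\|f\|_p\ge\|f\|_1\gg n^{d-1}\ge n^{(d-1)/p}$), but falls short by the factor $2^{n\beta}$ when $q<p$. The natural remedy is to pick $\by_\bs\in\T^d$ at distance $\gg 2^{-n/d}$ from every point of $X_m\setminus W(\bs)$ (which exists by a volume count since $|X_m\setminus W(\bs)|\le 2^{n-1}$), start with a Fej\'er-type kernel in $\Tr(\bN_\bs,d)$ concentrated at $\by_\bs$, and correct it by a small interpolation term in $\Tr(\bN_\bs,d)$ to enforce vanishing on $X_m\setminus W(\bs)$ without destroying the peak; this is where the technical core of the argument sits.
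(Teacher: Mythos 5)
There is a genuine gap, and you have in fact located it yourself: the construction of a ``Nikolskii-sharp'' polynomial inside the half-dimensional subspace $\Psi(\bs)$ is exactly the technical core of the theorem, and your proposal stops short of carrying it out. Your suggested remedy --- start from a Fej\'er kernel peaked at a point $\by_\bs$ far from $X_m\setminus W(\bs)$ and then ``correct it by a small interpolation term'' to enforce the $2^{n-1}$ vanishing conditions --- is not obviously workable: the correction lives in a $2^{n-1}$-dimensional space and there is no a priori control that it does not destroy the peak or inflate the $L_1$ norm. The paper's resolution is simpler and goes the other way around: take the \emph{flat} polynomial $t^1_\bs\in\Psi(\bs)$ supplied by Theorem~\ref{T1.2} (with $\|t^1_\bs\|_\infty=1$, $\|t^1_\bs\|_2\gg 1$), let $\bx^*$ be a point where $|t^1_\bs|=1$, and \emph{multiply} by the Fej\'er kernel $\K_{2^{\bs-2}}(\bx-\bx^*)$. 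The pointwise product automatically inherits the vanishing on $X_m\setminus W(\bs)$ from the factor $t^1_\bs$, its frequencies stay in a controlled shell, and it satisfies $\|t_\bs\|_1\le\|\K_{2^{\bs-2}}\|_1=1$ while $|t_\bs(\bx^*)|\asymp 2^n$, hence $\|t_\bs\|_2\gg 2^{n/2}$ by Nikol'skii --- precisely the concentration you need, obtained with no interpolation step at all.

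A second, smaller issue is your route to the lower bound on $\|f\|_p$. The Khintchine reduction gives $\bigl\|\bigl(\sum_\bs|T_\bs|^2\bigr)^{1/2}\bigr\|_p$, but the further inequality $\gg\bigl(\sum_\bs\|T_\bs\|_p^p\bigr)^{1/p}$ fails in general for $p<2$ (which is allowed here, since only $1\le q<p<\infty$ is assumed), and your appeal to the $T_\bs$ being ``concentrated at different points'' is not established --- the peaks $\bx^*_\bs$ are dictated by the polynomials $t^1_\bs$ and need not be separated. The paper avoids this entirely: it restricts $\bs$ to the lacunary set $Y(n,d)$ (coordinates in $4\N$) so that the products $t_\bs(\bx)w(\bs,\bx)$ have pairwise disjoint Fourier supports, giving $\|t\|_2^2\gg 2^n n^{d-1}$ by orthogonality; it then bounds $\|t\|_{p'}\ll 2^{n/p}n^{(d-1)/p'}$ from above via Theorem~\ref{T1.1} applied to the array $\varepsilon_\bs=\|t_\bs\|_1\ll 1$, and concludes $\|t\|_p\ge\|t\|_2^2/\|t\|_{p'}\gg 2^{n(1-1/p)}n^{(d-1)/p}$ by H\"older duality. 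Your overall skeleton (a fooling function $\sum_\bs g_\bs\,w(\bs,\cdot)$ annihilated by every recovery operator on an $(n,n-1)$-net, normalized in $\bH^r_q$) matches the paper's, but both quantitative pillars --- the concentrated polynomial in $\Psi(\bs)$ and the $L_p$ lower bound for the sum --- remain unproved as written.
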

Before proceeding to the proof of this theorem we make some historical comments. The problem of optimal recovery on classes of functions with mixed smoothness is wide open. Denote for a class $\bW$
$$
\varrho_m(\bW)_p := \inf_{X_m;\psi_1,\dots,\psi_m} \Psi(\bW,X_m)_p.
$$
The right order of this characteristic is known only in a few cases. It was established in \cite{VT51} that
\be\label{5.1}
\varrho_m(\bW^r_2)_\infty \asymp m^{-r+1/2} (\log m)^{r(d-1)}.
\ee
The upper bound in (\ref{5.1}) was obtained by recovering with the Smolyak-type operator
$T_n$ with appropriate $n$. The operator $T_n$ uses the sparse grid $SG(n+d)$ and $\psi_j \in \Tr(Q_{n+d})$. The operator $T_n$ and its variants were studied in many papers. 
It is proved in \cite{T23} that for any $f\in \bH^r_p$, $1\le p \le \infty$, $r>1/p$
\be\label{5.2}
\|f-T_n(f)\|_p \ll 2^{-rn} n^{d-1}.
\ee
The following bound is obtained in \cite{TBook}, Ch.4, Section 5, Remark 2: for any $f\in \bH^r_q$, $1\le q< p < \infty$, $r>1/q$
\be\label{5.3}
\|f-T_n(f)\|_p \ll 2^{-n(r-\beta)} n^{(d-1)/p}.
\ee
The upper bound (\ref{5.2}) and the lower bound for the Kolmogorov width from \cite{VT59}: for $d=2$
$$
d_m(\bH^r_\infty,L_\infty)\asymp m^{-r} (\log m)^{r+1}
$$
imply for $d=2$
\be\label{5.4}
\varrho_m(\bH^r_\infty)_\infty \asymp m^{-r} (\log m)^{r+1}.
\ee
The upper bound (\ref{5.3}) and known bounds for the Kolmogorov width (see, for instance, \cite{Tmon}, Ch.3): for $1\le q\le 2$, $r>1/q$
$$
d_m(\bH^r_q,L_2)\asymp m^{-r+\eta} (\log m)^{(d-1)(r+1-1/q)},\quad \eta:=1/q-1/2.
$$
imply for $1\le q\le 2$, $r>1/q$
\be\label{5.4}
\varrho_m(\bH^r_q)_2 \asymp m^{-r+\eta} (\log m)^{(d-1)(r+1-1/q)}.
\ee
As we already pointed out above $T_n(f)\in \Tr(Q_{n+d})$. Thus $T_n$ provides approximation from the hyperbolic cross $\Tr(Q_{n+d})$. The upper bound (\ref{5.3}) and known bounds for the best hyperbolic cross approximation (see, for instance, \cite{Tmon}, Ch.2, Theorem 2.2) show that for $1\le q<p <\infty$, $r>1/q$, the operator $T_n$ provides 
optimal in the sense of order rate of approximation for $\bH^r_q$ in the $L_p$. 

{\bf Proof of Theorem \ref{T5.1}.} We use the polynomials $t^1_\bs$ constructed in the proof of Theorem \ref{T4.2}. We also need some more constructions. Let 
 $$
{\mathcal K}_{N} (x) :=
\sum_{|k|\le N} \bigl(1 - |k|/N\bigr) e^{ikx} =\bigl(\sin (Nx/2)\bigr)^2\bigm /\bigl(N (\sin (x/2)\bigr)^2\bigr)
$$
be a univariate Fej\'er kernel.
The Fej\'er kernel ${\mathcal K}_{N}$ is an even nonnegative trigonometric
polynomial in $\Tr(N-1)$.  
From the obvious relations
$$
\| {\mathcal K}_{N} \|_1 = 1, \qquad \| {\mathcal K}_{N} \|_{\infty} = N
$$
and the inequality  
$$
\| f \|_q \le \| f \|_1^{1/q} \| f \|_{\infty}^{1-1/q}
$$
we get  
\begin{equation}\label{2.1.8}
C N^{1-1/q}\le \| {\mathcal K}_{N} \|_q \le N^{1-1/q}, \qquad
 1\le q\le \infty.
\end{equation}
In the multivariate case define
$$
\mathcal K_{\mathbf N} (\mathbf x) :=\prod_{j=1}^d\mathcal K_{N_j}  (x_j)  ,\qquad
\mathbf N = (N_1 ,\dots,N_d).
$$
Then the $\mathcal K_{\mathbf N}$ are nonnegative trigonometric polynomials from  $\Tr(\mathbf N-\mathbf 1,d)$  which  have
the following properties:
\begin{equation}\label{2.2.6}
\|\mathcal K_{\mathbf N}\|_1  = 1,
\end{equation}
\begin{equation}\label{2.2.7}
\|\mathcal K_{\mathbf N}\|_q\asymp \vartheta(\mathbf N)^{1-1/q},\qquad
1\le q\le\infty.
\end{equation}
 For $n$ of the form $n=4l$, $l\in \N$, define
 $$
 Y(n,d):=\{\bs: \bs=(4l_1,\dots,4l_d),\quad l_1+\dots+l_d=n/4,\quad l_j\in\N,\quad j=1,\dots,d\}.
 $$
 Define for $\bs\in Y(n,d)$
 $$
 t_\bs(\bx) := t^1_\bs(\bx)\K_{2^{\bs-2}}(\bx-\bx^*),
 $$
 where $\bx^*$ is a point of maximum of $|t^1_\bs(\bx)|$. Finally, define
 $$
 t(\bx) := \sum_{\bs\in Y(n,d)} t_\bs(\bx)w(\bs,\bx).
 $$
 Then we have
 $$
 |t_\bs(\bx^*| \gg 2^n
 $$
 and, therefore, by Nikol'skii's inequality
 $$
 \|t_\bs\|_2 \gg 2^{n/2}.
 $$
 It follows from our definition of $Y(n,d)$ that polynomials $t_\bs(\bx)w(\bs,\bx)$, $\bs\in Y(n,d)$, form an orthogonal system. This implies
 \be\label{5.10}
 \|t\|_2^2 \gg 2^n n^{d-1}.
 \ee
 Take any $p\in (1,\infty)$ and by Theorem \ref{T1.1} estimate
 \be\label{5.11}
 \|t\|_{p'}\ll \left(\sum_{\bs\in Y(n,d)} \|t_\bs\|_1^{p'}2^{\|\bs\|_1(p'-1)}\right)^{1/p'} \ll 2^{n/p}n^{(d-1)/p'}.
 \ee
 Relations (\ref{5.10}) and (\ref{5.11}) imply
 \be\label{5.12}
 \|t\|_{p}\gg   2^{n(1-1/p)}n^{(d-1)/p}.
 \ee
It is clear that
\be\label{5.13}
\|t\|_{\bH^r_q}\ll 2^{n(r+1-1/q)}.
\ee
Bounds (\ref{5.12}) and (\ref{5.13}) imply the required in Theorem \ref{T5.1} bound. 
Theorem \ref{T5.1} is proved.

The inequaality 
$$
\|t\|_{\bB^r_{q,\theta}}\ll 2^{n(r+1-1/q)}n^{(d-1)/\theta}
$$
and (\ref{5.12}) imply the following result.

\begin{Theorem}\label{T5.2} For any recovering operator $\Psi(\cdot,X_m)$ with respect to a \newline $(n,n-1)$-net $X_m$ we have for $1\le q< p<\infty$, $r>\beta$,
$$
\Psi(\bB^r_{q,\theta},X_m)_p \gg 2^{-n(r-\beta)}n^{(d-1)(1/p-1/\theta)},\quad \beta:=1/q-1/p.
$$
\end{Theorem}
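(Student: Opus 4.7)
The plan is to recycle the test polynomial
$$
t(\bx)=\sum_{\bs\in Y(n,d)} t_\bs(\bx)w(\bs,\bx),\quad t_\bs(\bx):=t_\bs^1(\bx)\K_{2^{\bs-2}}(\bx-\bx^*),
$$
built in the proof of Theorem \ref{T5.1}, and merely sharpen the ambient class norm from $\bH^r_q$ to the general $\bB^r_{q,\theta}$ scale. The required inequality is
$$
\|t\|_{\bB^r_{q,\theta}}\ll 2^{n(r+1-1/q)}n^{(d-1)/\theta},
$$
which, combined with the lower bound (\ref{5.12}) for $\|t\|_p$, yields the ratio $2^{-n(r-\beta)}n^{(d-1)(1/p-1/\theta)}$ advertised in the theorem.

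The annihilation step is identical to Theorem \ref{T5.1}: $t$ vanishes at every knot of the $(n,n-1)$-net $X_m$, because for each $\bs$ with $\|\bs\|_1=n$ a knot $\xi^i$ either lies in $W(\bs)$, making $w(\bs,\xi^i)=0$, or lies in $X_m\setminus W(\bs)$, making $t_\bs^1(\xi^i)=0$ by the choice $t_\bs^1\in\Psi(\bs)$. Thus $\Psi(t,X_m)=\sum_j t(\xi^j)\psi_j=0$ for any recovering coefficients $\psi_j$, and so $\|t-\Psi(t,X_m)\|_p=\|t\|_p\gg 2^{n(1-1/p)}n^{(d-1)/p}$ by (\ref{5.12}). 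Applying this to $t/\|t\|_{\bB^r_{q,\theta}}\in\bB^r_{q,\theta}$ (up to an absolute constant) completes the derivation, once the Besov norm is controlled.

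For the Besov bound I would estimate block by block. For each $\bs\in Y(n,d)$ the product $t_\bs(\bx)w(\bs,\bx)$ is a trigonometric polynomial whose Fourier support lies in an $O(1)$ neighbourhood of the dyadic block $\rho(\bs)$; it thus contributes to only $O(1)$ blocks $\rho(\bs')$, each with $\|\bs'\|_1\asymp n$. Conversely, the gap $4$ in the definition of $Y(n,d)$ ensures that each such $\bs'$ receives contributions from only $O(1)$ indices $\bs\in Y(n,d)$. Using $\|t_\bs^1\|_\infty=1$, $\|w(\bs,\cdot)\|_\infty=1$, and (\ref{2.2.7}) in the form $\|\K_{2^{\bs-2}}\|_q\asymp 2^{n(1-1/q)}$, H\"older's inequality gives $\|t_\bs(\bx)w(\bs,\bx)\|_q\ll 2^{n(1-1/q)}$ and therefore $\|\delta_{\bs'}(t)\|_q\ll 2^{n(1-1/q)}$ for each relevant $\bs'$. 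Summing in $\ell_\theta$ over the $\asymp n^{d-1}$ blocks with $\|\bs'\|_1\asymp n$ produces
$$
\|t\|_{\bB^r_{q,\theta}}^\theta\ll n^{d-1}\bigl(2^{rn}\cdot 2^{n(1-1/q)}\bigr)^\theta\asymp n^{d-1}2^{n\theta(r+1-1/q)},
$$
which is exactly the bound we want.

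The main technical point is the Besov estimate in the third paragraph: one has to verify precisely that the spreading of the Fourier support of $t_\bs(\bx)w(\bs,\bx)$ into neighbours of $\rho(\bs)$ is bounded, and that the spacing in $Y(n,d)$ prevents too much overlap between different $\bs$. All other steps — annihilation on the net, the application of (\ref{5.12}), and the final division — are literal transcriptions of the proof of Theorem \ref{T5.1}.
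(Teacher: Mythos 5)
Your proposal is correct and is essentially the paper's own argument: reuse the polynomial $t$ from the proof of Theorem \ref{T5.1}, note it vanishes on the net, apply (\ref{5.12}), and combine with the bound $\|t\|_{\bB^r_{q,\theta}}\ll 2^{n(r+1-1/q)}n^{(d-1)/\theta}$. The paper states that Besov-norm inequality without proof, so your block-by-block verification (bounded spreading of Fourier supports and non-overlap forced by the spacing $4$ in $Y(n,d)$) simply supplies the details the paper omits.
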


\section{Discussion}

As we stressed in the title and in the Introduction we are interested in constructive methods of $m$-term approximation with respect to the trigonometric system. Theorem \ref{TC}, basically, solves this problem for approximation in $L_p$, $2\le p<\infty$. We do not have a similar result for approximation in $L_p$, $1<p<2$. The corresponding Lebesgue-type inequality from \cite{T144} gives for $1<p<2$
$$
\|f_{C(t,p,d)m^{p'-1}\log (m+1)}\|_p \le C\sigma_m(f)_p,
$$
which is much weaker than Theorem \ref{TC}. It would be interesting to obtain good Lebesgue-type inequalities in the case $1<p<2$ for either the WCGA or for some other constructive methods. 

Main results of this paper are on the $m$-term approximation in the case $2\le p\le\infty$. For $p\in[2,\infty)$ the situation is very good: we have a universal algorithm (WCGA), which provides almost optimal (up to extra $(\log m)^{C(r,d)}$ factor) $m$-term approximation for all classes $\bW^r_q$, $\bH^r_q$, and $\bB^r_{q,\theta}$. Also, there are constructive methods, based on greedy algorithms, which provide the optimal rate for the above classes. 
However, the upper bounds in, say, Theorem \ref{T2.8C} hold for smoothness $r$ larger than the one required for embedding of $\bW^r_q$ into $L_p$. It would be interesting to find constructive methods, which provide right orders of decay of $\sigma_m(\bW^r_q)_p$,  $\sigma_m(\bH^r_q)_p$, and $\sigma_m(\bB^r_{q,\theta})_p$ for small smoothness. 

The case $p=\infty$ (approximation in the uniform norm) is a very interesting and difficult case. The space $C(\T^d)$ is not a smooth Banach space. Therefore, the existing greedy approximation theory does not apply directly in the case of approximation in $L_\infty$. In particular, there is no analog of Theorem \ref{TC} in the case $p=\infty$. However, for the function classes with mixed smoothness there is a way around this problem. As it is demonstrated in the proof of Theorem \ref{T2.4} we can use greedy algorithms in $L_p$ with large $p$ to obtain bounds on $m$-term approximation in $L_\infty$. The price we pay for this trick is an extra $(\log m)^{1/2}$ factor in the error bound. This extra factor results from the factor $p^{1/2}$ in the error bounds of approximation by greedy algorithms in $L_p$, $2\le p<\infty$ (see Remark \ref{R2.1} and Theorem \ref{T2.2}). An extra $(\log m)^{1/2}$ appears, as a result of different techniques, in other upper bounds of asymptotic characteristics of classes of functions with mixed smoothness, when we go from $p<\infty$ to $p=\infty$ (see, for instance, \cite{Tbook}, Ch.3, Section 3.6). Unfortunately, we do not have matching lower bounds for our upper bounds for $m$-term approximation in $L_\infty$. A very special case in Theorem \ref{T3.4} could be interpreted as a hint that we cannot get rid of that extra $(\log m)^{1/2}$ for approximation in $L_\infty$. 

We discussed isotropic classes of functions with mixed smoothness. Isotropic means that all variables play the same role in the definition of our smoothness classes. In the hyperbolic cross approximation theory anisotropic classes of functions with mixed smoothness are of interest and importance. We give the corresponding definitions. Let $\br=(r_1,\dots,r_d)$ be such that $0<r_1=r_2=\dots=r_\nu<r_{\nu+1}\le r_{\nu+2}\le\dots\le r_d$ with $1\le \nu \le d$. 
For $\bx=(x_1,\dots,x_d)$ denote
$$
F_\br(\bx) := \prod_{j=1}^d F_{r_j}(x_j)
$$
and
$$
\bW^\br_p := \{f:f=\varphi\ast F_\br,\quad \|\varphi\|_p \le 1\}.
$$
We now proceed to classes $\bH^\br_q$ and $\bB^\br_{q,\theta}$.
Define
$$
\|f\|_{\bH^\br_q}:= \sup_\bs \|\delta_\bs(f)\|_q 2^{(\br,\bs)},
$$
and for $1\le \theta <\infty$ define
$$
\|f\|_{\bB^\br_{q,\theta}}:= \left(\sum_{\bs}\left(\|\delta_\bs(f)\|_q 2^{(\br,\bs)}\right)^\theta\right)^{1/\theta}.
$$
We will write $\bB^\br_{q,\infty}:=\bH^\br_q$. 
Denote the corresponding unit ball
$$
\bB^\br_{q,\theta}:= \{f: \|f\|_{\bB^\br_{q,\theta}}\le 1\}.
$$
It is known that in many problems of estimating asymptotic characteristics the anisotropic classes of functions of $d$ variables with mixed smoothness behave in the same way as isotropic classes of functions of $\nu$ variables (see, for instance, \cite{Tmon}). It is clear that the above remark holds for the lower bounds. To prove it for the upper bounds one needs to develop, in some cases, a special technique. The techniques developed in this paper work for the anisotropic classes as well. For instance, the main Lemma \ref{L2.1} is replaced by the following lemma.

\begin{Lemma}\label{L6.1} Denote $r:=r_1$. Define for $f\in L_1$
$$
f_{l,\br}:=\sum_{\bs:rl\le (\br,\bs)<r(l+1)}\delta_\bs(f), \quad l\in \N_0.
$$
Consider the class
$$
\bW^{\br,a,b}_A:=\{f: \|f_{l,\br}\|_A \le 2^{-al}l^{(\nu-1)b}\}.
$$
Then for $2\le p<\infty$ and $a>0$ there is a constructive method based on greedy algorithms, which provides the bound
\begin{equation}\label{2.13''}
\sigma_m(\bW^{\br,a,b}_A)_p \ll  m^{-a-1/2} (\log m)^{(\nu-1)(a+b)}.      
\end{equation}
For $p=\infty$ we have
\begin{equation}\label{2.14''}
\sigma_m(\bW^{\br,a,b}_A)_\infty \ll  m^{-a-1/2} (\log m)^{(\nu-1)(a+b)+1/2}.      
\end{equation}

\end{Lemma}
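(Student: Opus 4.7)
The plan is to mimic the isotropic Lemma \ref{L2.1} with $d-1$ replaced everywhere by $\nu-1$; the only genuinely new input is a counting estimate tailored to the anisotropy. Since $r_1=\cdots=r_\nu=r<r_{\nu+1}\le\cdots\le r_d$, I would first verify that
$$
\#\{\bs\in\N_0^d:rl\le(\br,\bs)<r(l+1)\}\asymp l^{\nu-1},
$$
and consequently that the step hyperbolic cross $Q_n^\br:=\cup_{(\br,\bs)<rn}\rho(\bs)$ satisfies $|Q_n^\br|\asymp 2^n n^{\nu-1}$. This is elementary: the ``heavy'' coordinates $s_j$ with $j>\nu$ are confined to the bounded range $s_j\le rl/r_j$, while for each such choice the ``light'' coordinates $s_1,\dots,s_\nu$ run over an approximately $(\nu-1)$-dimensional simplex of side $\asymp l$; summing over the bounded tail produces the claim.

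With the counting in hand, I would fix $n$ with $m\asymp 2^n n^{\nu-1}$, decompose $f=S_n^\br(f)+\sum_{l\ge n}f_{l,\br}$ where $S_n^\br$ picks up the frequencies in $Q_n^\br$, and include $S_n^\br(f)$ in full (costing $O(2^n n^{\nu-1})$ terms). Each tail layer $f_{l,\br}$ is then approximated by the constructive greedy method $G^p_{m_l}$ of Theorem \ref{T2.5C} with the same kind of schedule as in the isotropic case,
$$
m_l:=[2^{n-\mu(l-n)}l^{\nu-1}],\qquad 0<\mu<a.
$$
By (\ref{2.14p}) and the assumption $\|f_{l,\br}\|_A\le 2^{-al}l^{(\nu-1)b}$, one has $\|f_{l,\br}-G^p_{m_l}(f_{l,\br})\|_p\ll(\mb_l)^{-1/2}2^{-al}l^{(\nu-1)b}$. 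Summing $m_l$ gives $\ll 2^n n^{\nu-1}\asymp m$ (the factor $2^{-\mu(l-n)}$ is geometrically summable), while the total $L_p$ error reduces, after extracting the $l=n$ dominant term, to $\ll 2^{-n(a+1/2)}n^{(\nu-1)(b-1/2)}$. Converting via $2^n\asymp m/n^{\nu-1}$ and $n\asymp\log m$ produces (\ref{2.13''}).

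For $p=\infty$ the plan is simply to replace (\ref{2.14p}) by (\ref{2.14'}). The layer $f_{l,\br}$ lies in $\Tr(\bN,d)$ with $N_j\asymp 2^{r(l+1)/r_j}$, so the enclosing dimension is $\vartheta(\bN)\asymp 2^{cl}$ for $c=r\sum_j 1/r_j$, and the extra factor $(\ln\vartheta(\bN))^{1/2}\asymp l^{1/2}$ per layer upgrades the final bound by $(\log m)^{1/2}$, yielding (\ref{2.14''}). I expect the only non-routine point to be the counting step, where one must carefully exploit the strict inequality $r_\nu<r_{\nu+1}$ to isolate $\nu$ as the effective dimension; once that is done, the rest is a line-by-line transplant of the proof of Lemma \ref{L2.1}, with no new greedy-theoretic content required.
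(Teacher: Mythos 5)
Your overall strategy is exactly what the paper intends: the paper gives no proof of Lemma \ref{L6.1} at all, merely asserting it as the anisotropic analogue of Lemma \ref{L2.1}, and your layer-by-layer transplant (include $S_n^\br(f)$ in full, approximate each tail layer by $G^p_{m_l}$ with the schedule $m_l=[2^{n-\mu(l-n)}l^{\nu-1}]$, replace (\ref{2.14p}) by (\ref{2.14'}) for $p=\infty$) is the right, and essentially forced, way to fill that in. The bookkeeping in your second and third paragraphs is correct, since the per-layer error only uses the hypothesis $\|f_{l,\br}\|_A\le 2^{-al}l^{(\nu-1)b}$ and Theorem \ref{T2.5C}.

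However, the one ingredient you single out as ``the only genuinely new input'' is false as stated. The count $\#\{\bs\in\N_0^d: rl\le(\br,\bs)<r(l+1)\}$ is \emph{not} $\asymp l^{\nu-1}$; already for $d=2$, $\nu=1$, $r_1=1$, $r_2=2$ the layer $s_1+2s_2=l$ contains $\asymp l$ vectors, not $\asymp l^0=1$. The heavy coordinates range over $0\le s_j\le rl/r_j$, which is an interval of length $\asymp l$, not a bounded set, so summing the $(\nu-1)$-simplex count over them produces $l^{d-1}$, not $l^{\nu-1}$. What is true, and what you actually need for the term count of $S_n^\br(f)$, is the \emph{weighted} estimate
$$
\sum_{\bs:\,rl\le(\br,\bs)<r(l+1)}2^{\|\bs\|_1}\asymp 2^l l^{\nu-1},
\qquad\text{hence}\qquad |Q_n^\br|\asymp 2^n n^{\nu-1}.
$$
This follows because on such a layer $\|\bs\|_1=l-\sum_{j>\nu}\bigl((r_j/r)-1\bigr)s_j$ up to $O(1)$, and each exponent $(r_j/r)-1$ is strictly positive by $r_\nu<r_{\nu+1}$; so the heavy coordinates contribute a convergent geometric factor $\prod_{j>\nu}2^{-((r_j/r)-1)s_j}$, the dominant contribution comes from $s_{\nu+1}=\dots=s_d=0$, and only the light simplex of $\asymp l^{\nu-1}$ points survives. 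This is where the strict inequality enters --- through geometric decay of the block sizes in the heavy directions, not through boundedness of their ranges. With the counting step restated this way, the rest of your argument goes through verbatim and yields (\ref{2.13''}) and (\ref{2.14''}).
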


\begin{Proposition}\label{P6.1} Results of Section 2 hold for the anisotropic classes of functions with mixed smoothness with $r=r_1$ and $d$ replaced by $\nu$. 
\end{Proposition}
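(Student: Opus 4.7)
The plan is to follow the proofs of Theorems \ref{T2.8}--\ref{T2.13} and Propositions \ref{HCp}, \ref{BCp} of Section 2 line by line, making two systematic substitutions suggested by Lemma \ref{L6.1}: the dyadic block $f_l = \sum_{\|\bs\|_1 = l} \delta_\bs(f)$ is replaced everywhere by the weighted block $f_{l,\br} = \sum_{rl \le (\br,\bs) < r(l+1)} \delta_\bs(f)$, and the combinatorial count $|\{\bs : \|\bs\|_1 = l\}| \asymp l^{d-1}$ is replaced by $|\{\bs : (\br,\bs) \in [rl, r(l+1))\}| \asymp l^{\nu-1}$, since the ``fast'' directions $j > \nu$ with $r_j > r$ contribute only boundedly many dyadic shells per level. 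Lemma \ref{L6.1} then takes the place of Lemma \ref{L2.1}: its proof is a verbatim copy, with $m_l := [2^{n-\mu(l-n)} l^{\nu-1}]$ and Theorem \ref{T2.5C} applied to $f_{l,\br}$ in place of $f_l$. It therefore suffices, for each of the anisotropic classes $\bW^{\br}_q$, $\bH^{\br}_q$, $\bB^{\br}_{q,\theta}$, to establish the appropriate embedding into $\bW^{\br,a,b}_A$ with the same $(a,b)$ as the isotropic analog.

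For the representative case $\bW^{\br}_q$, $1 < q \le 2$, the Littlewood--Paley / Marcinkiewicz-multiplier argument that gives $\|f_l\|_q \ll 2^{-rl}$ in the isotropic case (cf.\ \cite{Tmon}, p.34) extends to $\|f_{l,\br}\|_q \ll 2^{-rl}$, because the projection onto $\{\bs : (\br,\bs) \in [rl, r(l+1))\}$ remains a bounded Fourier multiplier on $L_q$, $1 < q < \infty$. The spectrum of $f_{l,\br}$ sits in an anisotropic hyperbolic cross of cardinality $\asymp 2^{l+1} l^{\nu-1}$, so the anisotropic analog of Theorem \ref{A} (proved by rerunning the proof of \cite{Tmon}, Ch.1, Theorem 2.2 with the counts over $\bs$ reorganized along level sets of $(\br,\cdot)$) yields
\[
\|f_{l,\br}\|_A \ll 2^{l/q} l^{(\nu-1)(1-1/q)} \|f_{l,\br}\|_q \ll 2^{-(r-1/q)l} l^{(\nu-1)(1-1/q)}.
\]
Thus $f \in \bW^{\br, r-1/q, 1-1/q}_A$ and Lemma \ref{L6.1} delivers the upper bound with $d - 1$ replaced by $\nu - 1$. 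The classes $\bH^{\br}_q$ and $\bB^{\br}_{q,\theta}$ are handled by the same two-step recipe: the bound $\|f_l\|_A \ll 2^{-(r-1/q)l} l^{d-1}$ used in Proposition \ref{HCp} becomes $l^{\nu-1}$, and the bound $\|f_l\|_A \ll 2^{-(r-1/q)l} l^{(d-1)(1-1/\theta)}$ from Proposition \ref{BCp} becomes $l^{(\nu-1)(1-1/\theta)}$, each time by restricting the summation $\sum_{\|\bs\|_1 = l}\|\delta_\bs(f)\|_q$ to the at most $l^{\nu-1}$ multi-indices in the band.

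For the matching lower bounds I exploit that any function of the first $\nu$ variables, multiplied by a fixed smooth bump $\psi(x_{\nu+1}, \dots, x_d)$ in the fast variables, injects the $\nu$-variable isotropic class of smoothness $r = r_1$ into the corresponding $d$-variable anisotropic class with bounded norm. Since best $m$-term trigonometric approximation dominates its restriction to this subspace, the $\nu$-variable lower bounds of Theorems \ref{T2.6}, \ref{T2.7}, and the results cited from \cite{KTE1} transfer directly with $d - 1$ replaced by $\nu - 1$. The main obstacle is justifying the anisotropic Theorem \ref{A} with the sharp factor $(\log N)^{(\nu-1)(1-1/q)}$: one has to redo the proof of \cite{Tmon}, Ch.1, Theorem 2.2 and verify that only the $\nu$ slow directions contribute logarithmically to the number of dyadic shells in the anisotropic cross $\Gamma^{\br}(N) = \{\bk : \prod_j \max(|k_j|,1)^{r_j/r} \le N\}$, while the $d - \nu$ fast directions contribute only $O(1)$ terms per level. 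This reorganization is combinatorial but must be executed carefully; once it is in hand, the remainder of the proof of Proposition \ref{P6.1} is bookkeeping.
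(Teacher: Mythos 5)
Your proposal follows exactly the route the paper intends: the paper itself offers no detailed proof of Proposition \ref{P6.1}, only the statement of Lemma \ref{L6.1} as the replacement for Lemma \ref{L2.1} together with the remark that the lower bounds transfer trivially; your text is a correct elaboration of that sketch, including the reduction of the lower bounds to the $\nu$-variate isotropic case by tensoring with a function constant in the fast variables and projecting back.

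One intermediate claim is stated incorrectly, though the way you use it is salvageable and the conclusions are right. The raw cardinality $|\{\bs : (\br,\bs)\in[rl,r(l+1))\}|$ is in fact of order $l^{d-1}$, not $l^{\nu-1}$: for each of the $\asymp l^{d-\nu}$ choices of the fast coordinates $(s_{\nu+1},\dots,s_d)$ with $A:=\sum_{j>\nu}(r_j/r)s_j\le l$ there are $\asymp (l-A)^{\nu-1}$ admissible slow parts, and summing $a^{d-\nu-1}(l-a)^{\nu-1}$ over $a\le l$ gives $l^{d-1}$. What is true, and what the estimates actually require, is that the \emph{weighted} sums over the band behave as if only $l^{\nu-1}$ indices were present: since $(\br,\bs)\asymp rl$ forces $\|\bs\|_1\le l-c\sum_{j>\nu}s_j$ with $c>0$ (because $r_{\nu+1}>r$), one has
$$
\sum_{rl\le(\br,\bs)<r(l+1)}2^{\|\bs\|_1/q}\ \asymp\ 2^{l/q}\,l^{\nu-1},
$$
the indices with nonzero fast coordinates being exponentially damped. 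It is this weighted count (and its analogue with exponent $1$ for $|Q_n^{\br}|\asymp 2^n n^{\nu-1}$, and the corresponding reorganization inside the proof of the anisotropic version of Theorem \ref{A}) that produces the factor $l^{(\nu-1)b}$ in the hypotheses of Lemma \ref{L6.1} for each of the classes $\bW^{\br}_q$, $\bH^{\br}_q$, $\bB^{\br}_{q,\theta}$. With the combinatorial statement corrected in this form, the rest of your argument goes through as written.
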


\end{document}